\documentclass[12pt]{amsart}
\usepackage{amsthm}
\usepackage{amssymb}
\usepackage{amsmath}
\usepackage{hyperref}
\usepackage{xcolor}
\usepackage{amsthm}
\usepackage[matrix,arrow]{xy}
\usepackage{enumerate}

\usepackage[style=trad-abbrv,maxnames=99,maxalphanames=9, isbn=false, giveninits=true, doi=false, url=false]{biblatex}
\bibliography{ref.bib}
\renewbibmacro{in:}{}

\DeclareMathAlphabet\mathbfcal{OMS}{cmsy}{b}{n}

\setcounter{tocdepth}{1}

\numberwithin{equation}{section}

\addtolength{\oddsidemargin}{-.675in}
	\addtolength{\evensidemargin}{-.675in}
	\addtolength{\textwidth}{1.35in}

	\addtolength{\topmargin}{-.875in}
	\addtolength{\textheight}{1.35in}

\newtheorem{theorem}{Theorem}[section]
\newtheorem{definition}[theorem]{Definition}

\newtheorem{lemma}[theorem]{Lemma}
\newtheorem{conjecture}[theorem]{Conjecture}
\newtheorem{proposition}[theorem]{Proposition}
\newtheorem{corollary}[theorem]{Corollary}
\newtheorem{remark}[theorem]{Remark}

\newtheoremstyle{named}{}{}{\itshape}{}{\bfseries}{.}{.5em}{\thmnote{#3\ }#1}
\theoremstyle{named}

 \newcommand{\RR}{\mathbb{R}}
 
 \newcommand{\NN}{{\mathbb N}}

\newcommand{\cE}{\mathcal{E}}

\newcommand{\cH}{\mathcal{H}}

\newcommand{\cJ}{\mathcal{J}}

\newcommand{\Ric}{\operatorname{Ric}}
\newcommand{\HRic}{\mathrm{HRic}}

\newcommand{\ddc}{\sqrt{-1}\partial\bar\partial}

\newcommand{\tr}{\mathrm{tr}}

\title{The Ricci iteration towards cscK metrics
}

\usepackage{hyperref}
\hypersetup{colorlinks,linkcolor={blue},citecolor={red}}

\begin{document}

\author[K. Zhang]{Kewei Zhang
}
\address{School of Mathematical Sciences, Beijing Normal University, Beijing, 100875, People's Republic of China.}
\email{kwzhang@bnu.edu.cn}

\dedicatory{Dedicated to Gang Tian on the occasion of his $65^{th}$ birthday}

\maketitle

\begin{abstract}
    Motivated by the problem of finding constant scalar curvature K\"ahler metrics, we investigate a Ricci iteration sequence of Rubinstein that discretizes the pseudo-Calabi flow. While the long time existence of the flow is still an open question, we show that the iteration sequence does exist for all steps, along which the K-energy decreases. We further show that the iteration sequence, modulo automorphisms, converges smoothly to a constant scalar curvature K\"ahler metric if there is one, thus confirming a conjecture of Rubinstein from 2007 and extending results of Darvas--Rubinstein to arbitrary K\"ahler classes.
\end{abstract}

\tableofcontents

\section{Introduction}

A long standing problem in K\"ahler geometry is to find constant scalar curvature K\"ahler (cscK) metrics in a given K\"ahler class. Namely, for a compact K\"ahler manifold $(X,\omega)$ of dimension $n$, we want to search for a K\"ahler form $\omega^*\in\{\omega\}$ that satisfies
$$
\tr_{\omega^*}\Ric(\omega^*)=\bar R,
$$
where $\bar R:=2\pi n\frac{-K_X\cdot\{\omega\}^{n-1}}{\{\omega\}^n}$ is the average of the scalar curvature. 

Regarding the existence of such metrics, the influential Tian's properness conjecture (cf. \cite[Remark 5.2]{T94},\cite[Conjecture 7.12]{T00notes}) predicts that the existence of cscK metrics is equivalent to some suitable notion of properness of Mabuchi’s K-energy functional.
Tian’s conjecture is central in K\"ahler geometry and has attracted much work over the past two decades including motivating much work on equivalence between algebro-geometric notions of stability and existence of canonical metrics, as well as on the interface of pluripotential theory and Monge--Amp\`ere equations. We refer to the surveys \cite{Thom06,PS09,T12,PSS12,Y20}. 

In \cite{DR17}, using the Finsler geometry of the space of K\"ahler metrics, the authors reduce Tian's conjecture to a purely PDE regularity problem, which has been recently solved in \cite{CC2}. Therefore we now have a complete solution to Tian's properness conjecture.

On the other hand, provided the properness of the K-energy, how to produce a cscK metric is also a challenging problem in its own right. In \cite{CC2} the authors show that certain continuity path provides an approach towards cscK metrics. In this work we show that one can also produce a cscK metric using some dynamical system.

To present our results, we begin by recalling an elementary result in K\"ahler geometry.
\begin{lemma}
    A closed $(1,1)$-form $\theta$ on $(X,\omega)$ satisfies $\tr_\omega\theta=\mathrm{Const.}$ if and only if $\theta$ is harmonic with respect to the K\"ahler metric $\omega$.
\end{lemma}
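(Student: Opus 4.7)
The plan is straightforward. Since $\theta$ is closed by hypothesis, harmonicity ($\Delta\theta=0$) reduces to the single condition $d^*\theta=0$, so the task is to show that $d^*\theta=0$ if and only if $\tr_\omega\theta$ is constant.

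I would invoke the Kähler identities
$$
[\Lambda,\partial]=i\bar\partial^*,\qquad [\Lambda,\bar\partial]=-i\partial^*,
$$
where $\Lambda$ denotes the adjoint of the Lefschetz operator $L=\omega\wedge(\cdot)$, and use the fact that $\Lambda\theta=\tr_\omega\theta$ on a $(1,1)$-form. Applying both identities to $\theta$ and exploiting $\partial\theta=\bar\partial\theta=0$ to kill the $\Lambda\partial\theta$ and $\Lambda\bar\partial\theta$ terms, one is left with
$$
\partial^*\theta=-i\,\bar\partial(\tr_\omega\theta),\qquad \bar\partial^*\theta=i\,\partial(\tr_\omega\theta).
$$

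The equivalence then follows by inspection. If $\tr_\omega\theta$ is constant, both right-hand sides vanish, so $d^*\theta=\partial^*\theta+\bar\partial^*\theta=0$. Conversely, because $\partial^*\theta$ is of type $(0,1)$ while $\bar\partial^*\theta$ is of type $(1,0)$, the vanishing of their sum $d^*\theta$ forces each summand to vanish individually; the displayed identities then yield $\partial(\tr_\omega\theta)=\bar\partial(\tr_\omega\theta)=0$, so $d(\tr_\omega\theta)=0$ and $\tr_\omega\theta$ is locally constant (hence constant on the connected $X$).

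There is no substantive obstacle here; the only mild care needed is in bookkeeping the signs of the Kähler identities, which vary by convention. A slightly longer alternative, avoiding commutator identities, would be to employ the Lefschetz decomposition $\theta=\tfrac{1}{n}(\tr_\omega\theta)\,\omega+\theta_{\mathrm{prim}}$ together with the standard formula for $*$ acting on a primitive $(1,1)$-form, and then apply $d^*=-{*}d{*}$; but the commutator route above is the cleanest.
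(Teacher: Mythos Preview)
Your argument is correct: for a closed real $(1,1)$-form the K\"ahler identities reduce harmonicity to $d^*\theta=0$, and your computation $\partial^*\theta=-i\,\bar\partial(\tr_\omega\theta)$, $\bar\partial^*\theta=i\,\partial(\tr_\omega\theta)$ together with the bidegree separation gives the equivalence cleanly. The paper itself does not supply a proof; the lemma is stated as an elementary fact and left unproved, so there is nothing to compare against beyond noting that your approach is the standard one.
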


Therefore, $\omega^*$ is cscK if and only if $\Ric(\omega^*)$ is a harmonic form with respect to $\omega^*$. This viewpoint is by no means new, which was explored in early works of Calabi, Futaki, Bando and Mabuchi; see, e.g., \cite{Ban06}. When combined with the framework of geometric flows, this motivates one to
consider the following variant of the K\"ahler Ricci flow:
\begin{equation}
    \label{eqn:hKRF}
    \partial_t\omega_t=-\Ric(\omega_t)+\HRic(\omega_t),\ \omega_0=\omega.
\end{equation}
Here, given any K\"ahler form $\alpha$, $\HRic(\alpha)$ denotes the harmonic part of $\Ric(\alpha)$ with respect to $\alpha$.
If the flow \eqref{eqn:hKRF} smoothly converges to a limit $\omega_\infty$, then one has $$\Ric(\omega_\infty)=\HRic(\omega_\infty),$$ namely, $\omega_\infty$ is a cscK metric.

\begin{remark}
   If $2\pi c_1(X)=\lambda\{\omega\}$, then the flow \eqref{eqn:hKRF} reduces to
   $$
   \partial_t\omega_t=-\Ric(\omega_t)+\lambda\omega_t,\
   $$
   which is exactly the classical normalized K\"ahler Ricci flow in the study of K\"ahler--Einstein and K\"ahler Ricci soliton metrics (see e.g. \cite{Cao85,TZ07,PSSW09,TZ13,TZZZ13,TZ16,B18,CW20}). By \cite{Cao85,TZ06} we know that such flow has long time existence.
\end{remark}

The flow \eqref{eqn:hKRF} first appeared in \cite{Guan07} (see also \cite{Sim05} for a related flow) and was then briefly studied in \cite{Rub07,Rub08}. Later in \cite{CZ13}, this flow was systematically investigated and the authors call it the \emph{pseudo-Calabi flow}. Note that the flow \eqref{eqn:hKRF} can be viewed as a modified version of the Calabi flow \cite{Calabi82}, and it can be reduced to the coupled system of equations:
\begin{equation*}
    \label{eqn:parabo-CC-eqn}
    \begin{cases}
        \omega_t^n=e^{F_t}\omega^n,\\
        \dot{F_t}=\Delta_t F_t+\bar R-\tr_{\omega_t}\Ric(\omega).
    \end{cases}
\end{equation*}
So we obtain a parabolic version of the coupled equations for cscK metrics that are studied in \cite{CC1,CC2}. This being said, it is still a highly non-trivial problem to study this flow, with its long time existence and limiting behavior largely open. 

In this paper we adopt a somewhat different viewpoint.
We consider the discretization of the pseudo-Calabi flow \eqref{eqn:hKRF} that was first proposed by Rubinstein \cite{Rub07,Rub08}. More precisely,
given $\tau>0$, we investigate the following \emph{Ricci iteration} that appeared in \cite[Definition 2.1]{Rub07} and \cite[(41)]{Rub08}:

\begin{equation}
\label{eqn:def-har-Ric-ite}
    \frac{\omega_{i+1}-\omega_i}{\tau}=-\Ric(\omega_{i+1})+\HRic(\omega_{i+1}),\ i\in\NN,\ \omega_{0}=\omega.
\end{equation}

Part of the interest in this Ricci iteration is that, clearly, cscK metrics are fixed points. Therefore \eqref{eqn:def-har-Ric-ite} aims to provide a natural theoretical and numerical approach to uniformization 
in the challenging case of cscK metrics. In \cite[Conjecture 2.1]{Rub07}, Rubinstein proposed the following.

\begin{conjecture}
\label{conj:Rub}
    Let $X$ be a compact K\"ahler manifold, and assume that there exists a constant scalar curvature K\"ahler metric in a K\"ahler class $\Omega$. Then for any $\omega\in\Omega$ the Ricci iteration \eqref{eqn:def-har-Ric-ite} exists for all $i\in\NN$ and converges in an appropriate sense to a constant scalar curvature metric.
\end{conjecture}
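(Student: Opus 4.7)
The plan is to recast each Ricci iteration step as a twisted cscK equation in the sense of Chen--Cheng, solve it variationally, deduce K-energy monotonicity from the variational principle, and then exploit properness of $\cM$ modulo $\mathrm{Aut}_0(X,\Omega)$---granted by the hypothesis together with the resolution of Tian's conjecture in \cite{CC2,DR17}---to force smooth convergence of the iterates up to automorphisms.

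First I would reduce \eqref{eqn:def-har-Ric-ite} to a scalar PDE. By the $\ddc$-lemma, $\Ric(\omega_{i+1})-\HRic(\omega_{i+1})=\ddc h_{i+1}$ for a normalized Ricci potential $h_{i+1}$ of $\omega_{i+1}$. Writing $\omega_{i+1}=\omega_i+\ddc\psi_{i+1}$ with $\psi_{i+1}=-\tau h_{i+1}$ (modulo constants) and applying $\Delta_{\omega_{i+1}}$ transforms \eqref{eqn:def-har-Ric-ite} into
\begin{equation*}
R(\omega_{i+1})-\bar R = \tau^{-1}\bigl(\tr_{\omega_{i+1}}\omega_i - n\bigr),
\end{equation*}
equivalently $\tr_{\omega_{i+1}}[\Ric(\omega_{i+1})-\tau^{-1}\omega_i]=\bar R-\tau^{-1}n$. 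This is a Chen--Cheng twisted cscK equation with smooth, strictly positive twist $\tau^{-1}\omega_i$.

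Next I would establish existence of $\omega_{i+1}$ together with $\cM$-monotonicity. The equation above is the Euler--Lagrange equation of a twisted K-energy $\cM_i:=\cM+\tau^{-1}\cJ_i$, where $\cJ_i$ is a non-negative Aubin-type functional whose variation is $\tr_{\omega_\psi}\omega_i-n$ (against $\omega_\psi^n$) and which vanishes precisely at $\omega_\psi=\omega_i$. Because the twist $\tau^{-1}\omega_i$ is K\"ahler, $\cJ_i$ is strictly coercive, so $\cM_i$ is proper on $\cE^1(X,\omega)$ irrespective of any global assumption on $\Omega$, and a finite-energy minimizer exists. The Chen--Cheng a priori estimates for twisted cscK upgrade this minimizer to a smooth K\"ahler metric $\omega_{i+1}$. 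Monotonicity of the un-twisted K-energy follows by unfolding the minimization inequality: $\cM(\omega_{i+1})\le\cM(\omega_{i+1})+\tau^{-1}\cJ_i(\omega_{i+1})=\cM_i(\omega_{i+1})\le\cM_i(\omega_i)=\cM(\omega_i)$, with quantitative decrement $\cM(\omega_i)-\cM(\omega_{i+1})\ge\tau^{-1}\cJ_i(\omega_{i+1})$.

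For convergence, assume a cscK metric exists in $\Omega$. By \cite{CC2,DR17} the K-energy $\cM$ is $d_1$-proper modulo $\mathrm{Aut}_0(X,\Omega)$. The monotonicity $\cM(\omega_i)\le\cM(\omega_0)$ confines $\{\omega_i\}$ to a fixed sublevel set, so after choosing suitable gauges $g_i\in\mathrm{Aut}_0$ the sequence $g_i\cdot\omega_i$ is $d_1$-bounded, hence weakly precompact in $\cE^1$. Telescoping the quantitative decrement yields $\sum_i\cJ_i(\omega_{i+1})<\infty$, so $\cJ_i(\omega_{i+1})\to 0$; combined with the uniform $d_1$-bound this forces $d_1(\omega_i,\omega_{i+1})\to 0$ via comparison of $\cJ$-energies with $d_1$ on bounded sets. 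Any $d_1$-subsequential limit $\omega_\infty$ is then a fixed point of the iteration, so $\Ric(\omega_\infty)=\HRic(\omega_\infty)$, i.e.\ $\omega_\infty$ is weakly cscK; Chen--Cheng regularity promotes it to a smooth cscK metric, and uniqueness of cscK modulo $\mathrm{Aut}_0$ together with higher-order a priori bounds upgrade subsequential to smooth convergence of the whole sequence modulo automorphisms.

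The main obstacle I anticipate is twofold. First, the Chen--Cheng regularity machinery for the twisted equation must be deployed with estimates depending only on controllable data of the twist $\tau^{-1}\omega_i$, so that regularity can be bootstrapped \emph{uniformly} along the iteration rather than only step by step---this is crucial for extracting a smooth rather than merely $d_1$-weak limit. Second, the equivariant bookkeeping modulo $\mathrm{Aut}_0(X,\Omega)$ is delicate: the iteration is not literally equivariant under the obvious lift, so the gauges $g_i$ must be chosen carefully at each step in order that the telescoped $d_1$-distances produce a genuine limit (rather than only a Cauchy subsequence) and that the weak limit sits in a single finite-energy class.
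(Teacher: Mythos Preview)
Your overall strategy matches the paper's, but there is a real gap in the existence step. The claim that ``because the twist $\tau^{-1}\omega_i$ is K\"ahler, $\cJ_i$ is strictly coercive, so $\cM_i$ is proper on $\cE^1$ irrespective of any global assumption on $\Omega$'' is false as stated. The functional $\cJ_i=(I-J)_{\omega_i}$ grows only linearly in $d_1$, while the K-energy itself can decrease linearly (one always has $K_\omega\ge -C\,d_1$ for some $C=C(X,\omega)$, but nothing better without further hypotheses). Hence $\cM+\tau^{-1}\cJ_i$ need not be proper for arbitrary $\tau$. The paper handles this by introducing the threshold $\gamma(X,\{\omega\})=\sup\{\gamma:\inf_{\cH_\omega}(K_\omega-\gamma(I-J)_\omega)>-\infty\}$, which is always finite, and taking $\tau_0$ with $-1/\tau_0\le\gamma(X,\{\omega\})$; then $\cM_i$ is proper for every $\tau<\tau_0$. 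Under the conjecture's hypothesis one has $\gamma(X,\{\omega\})\ge 0$ (a cscK metric forces $K_\omega$ to be bounded below), so $\tau_0=\infty$ and your conclusion is salvaged---but the reasoning you gave does not establish it.

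Beyond this, your outline tracks the paper closely: the same twisted cscK reformulation, the same variational monotonicity argument, and the same scheme of properness modulo $G$ plus $d_1$-precompactness plus Chen--Cheng estimates. One point where the paper is more careful than your sketch: to show a subsequential limit $u_\infty$ is cscK, the paper does not simply assert ``$\omega_\infty$ is a fixed point.'' Instead it proves $K_\omega(u_\infty)\le K_\omega(u)+J(u,u_\infty)$ for all $u$ (using that $u_{i_k}$ minimizes $K_\omega+\cJ^{\omega_{i_k-1}}_\omega$ and that $u_{i_k-1}$ has the same $d_1$-limit as $u_{i_k}$), whence $u_\infty$ minimizes the twisted functional $K_\omega^{\omega_{u_\infty}}$; smoothness of $u_\infty$ (from the uniform $C^{k,\alpha}$ bounds) then yields $R(\omega_{u_\infty})=\bar R$ via the Euler--Lagrange equation. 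Your fixed-point heuristic would likewise require the uniform a priori estimates first in order to pass to the limit in the equation; you correctly flag this as the main obstacle, and indeed the paper's \S5 is a careful reworking of the Chen--Cheng $C^0$ and $W^{2,p}$ estimates in which the dependence on the moving twist $\omega_{i-1}$ is controlled solely through a uniform bound on $Ent_\omega(u_i)+d_1(0,u_i)$.
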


In addition, the Ricci iteration could be a source of new insights for the study of the pseudo-Calabi flow, which is known to be a rather difficult problem in the field of geometric flows.
For instance, just as in the case of Calabi flow, the long time existence of the flow \eqref{eqn:hKRF} is still unknown (see \cite[Conjecture 8.3]{CZ13}). However, after discretization, we can prove the following long time existence result for the sequence \eqref{eqn:def-har-Ric-ite}.

\begin{theorem}
\label{thm:ite-exist}
    There exists a uniform constant $\tau_0\in(0,\infty]$, depending only on $X$ and the K\"ahler class $\{\omega\}$, such that for any $\tau\in(0,\tau_0)$ the iteration sequence \eqref{eqn:def-har-Ric-ite} exists for all $i\in\NN$, with each $\omega_i$ being uniquely determined by $\omega_0$.
\end{theorem}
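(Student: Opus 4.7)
The plan is to convert each iteration step into a twisted cscK equation with a K\"ahler twisting form, and then appeal to Chen--Cheng-type a priori estimates. Assume inductively that $\omega_i$ is smooth and K\"ahler in $\{\omega\}$. Writing \eqref{eqn:def-har-Ric-ite} as $\omega_{i+1}-\omega_i=-\tau\ddc h_{\omega_{i+1}}$, where $h_{\omega'}$ denotes the Ricci potential of $\omega'$ (so $\ddc h_{\omega'}=\Ric(\omega')-\HRic(\omega')$), and taking the $\omega_{i+1}$-trace together with the identity $\Delta_{\omega_{i+1}}h_{\omega_{i+1}}=R(\omega_{i+1})-\bar R$, one obtains the scalar equation
\begin{equation*}
R(\omega_{i+1})-\tau^{-1}\tr_{\omega_{i+1}}\omega_i=\bar R-\tfrac{n}{\tau}.
\end{equation*}
The $\partial\bar\partial$-lemma makes this equivalent to \eqref{eqn:def-har-Ric-ite}: writing $\omega_{i+1}-\omega_i=\ddc\xi$ recasts \eqref{eqn:def-har-Ric-ite} as $\ddc(\xi+\tau h_{\omega_{i+1}})=0$, which on a compact manifold is the same as $\Delta_{\omega_{i+1}}(\xi+\tau h_{\omega_{i+1}})=0$. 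Hence each step reduces to producing a twisted cscK metric in $\{\omega\}$ with K\"ahler twisting form $\tau^{-1}\omega_i$.

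To produce this twisted cscK metric I would run a continuity method in $s\in[0,\tau]$:
\begin{equation*}
sR(\omega(s))-\tr_{\omega(s)}\omega_i=s\bar R-n,
\end{equation*}
beginning at $\omega(0)=\omega_i$ (uniqueness at $s=0$ comes from $\tr_{\omega(0)}\omega_i=n$ together with $\int\omega(0)^n=\int\omega_i^n$ via the pointwise AM--GM inequality) and ending at $\omega(\tau)=\omega_{i+1}$. Openness is handled by the implicit function theorem: the linearization is a fourth-order Lichnerowicz-type operator perturbed by a strictly positive second-order term coming from the K\"ahler twist $s^{-1}\omega_i$, which kills the kernel modulo constants. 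The main work is closedness, i.e.\ uniform smooth a priori bounds on $\omega(s)$ for $s\in[0,\tau]$.

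For these bounds I would follow the Chen--Cheng programme adapted to the twisted setting. The starting point is an entropy bound $\int\log(\omega(s)^n/\omega^n)\,\omega(s)^n\leq C$, which should follow from the monotonicity of the twisted Mabuchi energy $\cM_{s^{-1}\omega_i}$ along the path together with its coercivity. The coercivity in turn is forced by the positive K\"ahler twist: for $\tau$ below an explicit threshold $\tau_0$, $\cM_{s^{-1}\omega_i}$ is $d_1$-coercive uniformly in $s\in[0,\tau]$. Given the entropy bound, the PDE estimates of Chen--Cheng \cite{CC1,CC2} for twisted cscK equations yield $C^\infty$ bounds on the K\"ahler potential. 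Uniqueness of $\omega_{i+1}$ then follows from strict convexity of $\cM_{\tau^{-1}\omega_i}$ along $C^{1,1}$-geodesics (Berman--Berndtsson), which is strict precisely because $\tau^{-1}\omega_i$ is K\"ahler.

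The hardest point, I expect, is making the threshold $\tau_0$ genuinely uniform in $\omega_i$. Coercivity of $\cM_{\tau^{-1}\omega_i}$ involves $J$-functionals built from $\omega_i$, and one must show these can be compared with their $\omega$-counterparts with constants depending only on $X$ and $\{\omega\}$, not on the particular representative $\omega_i$. Such a uniform comparison inside a fixed K\"ahler class should be available, but isolating a clean constant $\tau_0=\tau_0(X,\{\omega\})$ and propagating it through the inductive step $i\mapsto i+1$ is where the bulk of the technical effort will need to go.
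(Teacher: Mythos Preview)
Your reduction of each iteration step to a twisted cscK equation with K\"ahler twist $\tau^{-1}\omega_i$ is exactly the paper's starting point. From there, however, the two arguments diverge. You propose to re-run a continuity method and rederive the Chen--Cheng estimates for each step, whereas the paper simply invokes \cite[Theorem 4.1]{CC2} as a black box: once the twisted K-energy $K^{\tau^{-1}\omega_i}_{\omega_i}$ is proper, existence of $\omega_{i+1}$ follows immediately, and uniqueness comes from \cite[Theorem 4.13]{BDL17}. So your continuity-method discussion, while not wrong, duplicates work already packaged in the literature.

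The substantive point you correctly flag as ``the hardest'' --- making $\tau_0$ uniform in $i$ --- is precisely where the paper has a clean device that your proposal lacks. The paper introduces the analytic threshold
\[
\gamma(X,\{\omega\}):=\sup\Big\{\gamma\in\RR\;\Big|\;\inf_{\varphi\in\cH_\omega}\big(K_{\omega}(\varphi)-\gamma(I_\omega-J_\omega)(\varphi)\big)>-\infty\Big\},
\]
and proves (via cocycle relations and a simple $C^0$ comparison) that it is finite and independent of the representative $\omega$ in the class. One then sets $\tau_0$ so that $-1/\tau_0\le\gamma(X,\{\omega\})$; for any $\tau<\tau_0$ and any $\omega_i\in\{\omega\}$, the identity $\cJ^{\omega_i}_{\omega_i}=(I_{\omega_i}-J_{\omega_i})$ gives properness of $K^{\tau^{-1}\omega_i}_{\omega_i}=K_{\omega_i}+\tau^{-1}(I_{\omega_i}-J_{\omega_i})$ automatically. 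This is exactly the ``uniform comparison inside a fixed K\"ahler class'' you say ``should be available'' but do not supply. Without it, your inductive step does not close: you have not shown that the coercivity constant for $\cM_{\tau^{-1}\omega_i}$ can be taken independent of $\omega_i$, so as written your $\tau_0$ could in principle shrink along the iteration.
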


This result gives a strong evidence for the long time existence of the pseudo-Calabi flow. Indeed, sending $\tau\to 0$, the iteration sequence $\{\omega_i\}$ is expected to converge to the flow \eqref{eqn:hKRF} (this is interestingly still a conjecture even for the Ricci iteration associated to the K\"ahler Ricci flow; compare the classical Rothe's method for parabolic equations). The fact that the sequence $\{\omega_i\}$ exists for all $i$ should imply that the flow \eqref{eqn:hKRF} exists for all $t$. This is of course a heuristic viewpoint, which hopefully can be made more rigorous in future study.

It is proved in \cite[Theorem 3.1]{CZ13} that Mabuchi's K-energy decreases along the flow \eqref{eqn:hKRF}.
We show that this is also the case for the Ricci iteration \eqref{eqn:def-har-Ric-ite}, which was previously only known in the case where $c_1(X)=\lambda\{\omega\}$ (see \cite[Proposition 4.2]{Rub08}).

\begin{theorem}
\label{thm:ite-decrease-K-energy}
    Along the iteration sequence $\{\omega_i\}_{i\in\NN}$, the K-energy $K_\omega$ satisfies
    $$
    K_\omega(\omega_{i+1})\leq K_\omega(\omega_i)\text{ for all }i\in\NN.
    $$
    The equality holds for some $i\in\NN$ if and only if $\omega_i=\omega_0$ is cscK for all $i\in\NN$. 
\end{theorem}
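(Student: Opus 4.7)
\emph{Approach.} My plan is to use the convexity of the K-energy along weak $C^{1,1}$ geodesics in the space of K\"ahler potentials (Berman--Berndtsson). Writing $\omega_j = \omega + \ddc\varphi_j$, the iteration \eqref{eqn:def-har-Ric-ite} is equivalent, after taking a $\ddc$-primitive, to
\[
\varphi_{i+1} - \varphi_i = -\tau\, \rho_{i+1} + c_{i+1},
\]
where $\rho_{i+1}$ is the Ricci potential of $\omega_{i+1}$ (satisfying $\ddc\rho_{i+1} = \Ric(\omega_{i+1}) - \HRic(\omega_{i+1})$, whence $\Delta_{\omega_{i+1}}\rho_{i+1} = S(\omega_{i+1}) - \bar R$ by Lemma~1.1) and $c_{i+1}$ is a normalizing constant. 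The target is a discrete analogue of the continuous dissipation identity for the flow \eqref{eqn:hKRF}, namely $K_\omega(\varphi_{i+1}) - K_\omega(\varphi_i) \le -\tau \int_X |\nabla\rho_{i+1}|^2_{\omega_{i+1}}\omega_{i+1}^n/n!$.

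\emph{Main argument.} Let $\{\psi_t\}_{t\in[0,1]}$ denote the weak $C^{1,1}$ geodesic from $\varphi_i$ to $\varphi_{i+1}$ (existence by Chen; regularity up to the smooth endpoints by Chu--Tosatti--Weinkove). Convexity of $\alpha(t) := K_\omega(\psi_t)$ together with the standard endpoint derivative formula at the smooth potential $\varphi_{i+1}$ yields
\[
K_\omega(\varphi_{i+1}) - K_\omega(\varphi_i) \le \alpha'(1^-) = -\int_X \dot\psi_1 \,\bigl(S(\omega_{i+1}) - \bar R\bigr)\, \omega_{i+1}^n/n!,
\]
where $\dot\psi_1$ is the left time-derivative of the geodesic at $t=1$. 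Using the iteration I rewrite $(S(\omega_{i+1})-\bar R)\omega_{i+1}^n/n! = \ddc\rho_{i+1} \wedge \omega_{i+1}^{n-1}/(n-1)! = \tau^{-1}(\omega_i - \omega_{i+1}) \wedge \omega_{i+1}^{n-1}/(n-1)!$; integration by parts then aims to convert the bound into the desired discrete dissipation $-\tau \int_X |\nabla\rho_{i+1}|_{\omega_{i+1}}^2 \omega_{i+1}^n/n! \le 0$. For the equality case, vanishing of this dissipation forces $|\nabla\rho_{i+1}| \equiv 0$, so $\rho_{i+1}$ is constant, i.e.\ $\omega_{i+1}$ is cscK; since cscK metrics are fixed points of \eqref{eqn:def-har-Ric-ite} (and the iteration can be run backwards step-by-step under existence), this propagates to $\omega_j = \omega_0$ for all $j \in \NN$.

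\emph{Main obstacle.} The crux is the passage from $\alpha'(1^-) = -\int_X \dot\psi_1 (S-\bar R)\omega_{i+1}^n/n!$ to the target dissipation $-\tau\int_X |\nabla\rho_{i+1}|^2 \omega_{i+1}^n/n!$. A priori the geodesic tangent $\dot\psi_1$ differs from the affine tangent $\varphi_{i+1} - \varphi_i = -\tau\rho_{i+1} + c_{i+1}$, and the discrepancy must be controlled by pluripotential-theoretic properties of the $C^{1,1}$ geodesic (the vanishing of the Monge--Amp\`ere mass of its complexification on $X\times\{1 \le |\zeta| \le e\}$, and Darvas-type endpoint derivative identities). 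This technical subtlety does not arise in the K\"ahler--Einstein setting of \cite{Rub08}, where one can work directly with the resulting complex Monge--Amp\`ere equation and invoke Jensen's inequality; handling it is the main hurdle in extending the monotonicity of the K-energy to arbitrary K\"ahler classes.
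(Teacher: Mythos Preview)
Your approach has a genuine gap that you yourself flag as the ``main obstacle'' but do not resolve. Convexity of $K_\omega$ along the weak geodesic $\psi_t$ indeed gives
\[
K_\omega(\varphi_{i+1})-K_\omega(\varphi_i)\le -\frac{1}{V}\int_X \dot\psi_1\,(S(\omega_{i+1})-\bar R)\,\omega_{i+1}^n
= -\frac{1}{\tau V}\int_X \dot\psi_1\,(\tr_{\omega_{i+1}}\omega_i-n)\,\omega_{i+1}^n,
\]
but the integrand $\tr_{\omega_{i+1}}\omega_i-n$ changes sign (it has zero $\omega_{i+1}^n$-average), so you cannot simply replace $\dot\psi_1$ by the affine tangent $\varphi_{i+1}-\varphi_i=-\tau\rho_{i+1}+c$ using only the pointwise inequality $\dot\psi_1\ge\varphi_{i+1}-\varphi_i$ that comes from convexity of $t\mapsto\psi_t$. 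There is no pluripotential identity that makes $\dot\psi_1$ equal to the affine tangent, and your target dissipation $-\tau\int_X|\nabla\rho_{i+1}|^2_{\omega_{i+1}}\omega_{i+1}^n$ does not follow from convexity of $K_\omega$ alone. (If you additionally invoke convexity of $\cJ^{\omega_i}_\omega$ along the same geodesic, the right-hand side can be bounded below by $\frac{1}{\tau}\big(\cJ^{\omega_i}_\omega(\varphi_{i+1})-\cJ^{\omega_i}_\omega(\varphi_i)\big)=\frac{1}{\tau}J(\varphi_{i+1},\varphi_i)\ge 0$; but then you are effectively using convexity of the full twisted functional $K_\omega+\tfrac{1}{\tau}\cJ^{\omega_i}_\omega$, which collapses to the paper's argument below.)

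The paper bypasses geodesics entirely. The point is that, by construction (Theorem~\ref{thm:ite-exist}), $\omega_{i+1}$ is the \emph{minimizer} of the twisted K-energy $K_{\omega_i}^{\omega_i/\tau}=K_{\omega_i}+\tfrac{1}{\tau}(I_{\omega_i}-J_{\omega_i})$. Comparing its value at $\omega_{i+1}$ with that at $\omega_i$ gives
\[
K_{\omega_i}(\omega_{i+1})+\tfrac{1}{\tau}(I_{\omega_i}-J_{\omega_i})(\omega_{i+1})\le 0,
\]
and the cocycle relation $K_\omega(\omega_{i+1})-K_\omega(\omega_i)=K_{\omega_i}(\omega_{i+1})$ yields
$K_\omega(\omega_{i+1})-K_\omega(\omega_i)\le -\tfrac{1}{\tau}(I_{\omega_i}-J_{\omega_i})(\omega_{i+1})\le 0$.
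For equality, $(I_{\omega_i}-J_{\omega_i})(\omega_{i+1})=0$ forces $\omega_{i+1}=\omega_i$ (Lemma~\ref{lem:J<I-J<J}), so \eqref{eqn:tcscK-eq} gives $R(\omega_i)=\bar R$; then the equation one step back reads $\bar R=R(\omega_i)=\bar R+\tfrac{1}{\tau}(\tr_{\omega_i}\omega_{i-1}-n)$, i.e.\ $\omega_{i-1}$ is $\omega_i$-harmonic in $\{\omega\}$, hence $\omega_{i-1}=\omega_i$ by uniqueness of harmonic representatives. Your backward step (``the iteration can be run backwards'') is this last observation, but you should make the harmonic-uniqueness argument explicit rather than appeal to reversibility.
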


Since a cscK metric, if exists, minimizes the K-energy. The above result suggests that the iteration sequence \eqref{eqn:def-har-Ric-ite} has the tendency to be attracted by a cscK metric in a suitable sense. We show that this is indeed the case, thus confirming Conjecture \ref{conj:Rub}.

\begin{theorem}
\label{thm:smooth-convergence}
    Assume that there exists a cscK metric in $\{\omega\}$. Then for any $\tau>0$ the iteration sequence $\omega_i$ exists, and, up to biholomorphic automorphisms, converges to a cscK metric smoothly.
\end{theorem}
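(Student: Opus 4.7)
The plan is to combine the monotone decrease of $K_\omega$ along the iteration (Theorem \ref{thm:ite-decrease-K-energy}) with the properness of $K_\omega$ modulo $\mathrm{Aut}_0(X)$, which follows from the existence of a cscK metric via the solution of Tian's conjecture due to Darvas--Rubinstein and Chen--Cheng. The first task is to extend existence of $\omega_{i+1}$ to every $\tau>0$, beyond the range $\tau\in(0,\tau_0)$ of Theorem \ref{thm:ite-exist}. Taking $\tr_{\omega_{i+1}}$ of the iteration equation \eqref{eqn:def-har-Ric-ite} reduces it to the twisted cscK-type equation
\begin{equation*}
R(\omega_{i+1}) - \bar R = \tau^{-1}\bigl(\tr_{\omega_{i+1}}\omega_i - n\bigr),
\end{equation*}
which is the Euler--Lagrange equation of a functional of the form $\cF_{\tau,\omega_i}=K_\omega + \tau^{-1} G_{\omega_i}$, with $G_{\omega_i}$ lower order and continuous in $d_1$. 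Properness of $K_\omega$ modulo $\mathrm{Aut}_0(X)$ transfers to coercivity of $\cF_{\tau,\omega_i}$, yielding a weak minimizer in $\cE^1$; the Chen--Cheng a priori estimates then upgrade it to a smooth K\"ahler metric, producing $\omega_{i+1}$.

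Once long-time existence is in hand, the monotonicity of $K_\omega(\omega_i)$ combined with properness produces automorphisms $\sigma_i\in\mathrm{Aut}_0(X)$ such that $\sup_i d_1(\sigma_i^*\omega_i,\omega)<\infty$. Standard compactness of bounded energy sets in $(\cE^1,d_1)$ yields a subsequential limit $\omega_\infty\in\cE^1$; lower semicontinuity of $K_\omega$ in $d_1$, together with the monotonicity, forces $K_\omega(\omega_\infty)=\inf K_\omega$, so $\omega_\infty$ is a cscK potential. Uniqueness of cscK metrics modulo $\mathrm{Aut}_0(X)$ promotes the subsequential statement to full $d_1$-convergence of $\sigma_i^*\omega_i$ (after possibly readjusting the $\sigma_i$).

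The final and most delicate step, which I expect to be the main obstacle, is upgrading $d_1$-convergence to smooth convergence. My approach is to view each $\sigma_i^*\omega_{i+1}$ as a solution of the twisted cscK equation above with source $\sigma_i^*\omega_i$, and to apply the Chen--Cheng a priori estimates uniformly in $i$; as the sequence stabilizes, the twist $\tau^{-1}(\tr_{\sigma_i^*\omega_{i+1}}\sigma_i^*\omega_i - n)$ should vanish in a suitable sense, and the equation reduces to the standard cscK equation in the limit. The technical difficulty is that the ``source'' $\sigma_i^*\omega_i$ is a priori controlled only in $d_1$, so higher-order bounds on $\sigma_i^*\omega_{i+1}$ must be extracted by a careful bootstrap ($L^\infty$, then $C^{1,\bar 1}$, then $C^{k,\alpha}$) that handles the coupling between consecutive iterates simultaneously---an analog of the proof of cscK existence, complicated by the fact that two successive members of the iteration must be controlled together.
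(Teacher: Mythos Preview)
Your overall architecture is close to the paper's, but there is a genuine gap in the step where you conclude that the subsequential $d_1$-limit $\omega_\infty$ is cscK. You write that ``lower semicontinuity of $K_\omega$ in $d_1$, together with the monotonicity, forces $K_\omega(\omega_\infty)=\inf K_\omega$.'' This does not follow: monotonicity only gives $K_\omega(\omega_i)\searrow\ell$ for some $\ell\geq\inf K_\omega$, and lower semicontinuity gives $K_\omega(\omega_\infty)\leq\ell$, but nothing forces $\ell=\inf K_\omega$. A decreasing sequence along a dynamical system can perfectly well stall above the infimum. The paper does \emph{not} argue this way. Instead it uses two ingredients you omit: (i) from $K_\omega(\omega_i)-K_\omega(\omega_{i+1})\geq J(\omega_{i+1},\omega_i)\geq 0$ one extracts $I(\omega_i,\omega_{i+1})\to 0$, so any $d_1$-limit of the pulled-back sequence $\{g_i.u_i\}$ is also a $d_1$-limit of $\{g_i.u_{i-1}\}$; (ii) since $\omega_{i+1}$ minimizes the twisted functional $K_\omega+\cJ_\omega^{\omega_i}$, passing to the limit shows $\omega_\infty$ minimizes $u\mapsto K_\omega(u)+J(u,\omega_\infty)$, which is the $\omega_\infty$-twisted K-energy. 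Its Euler--Lagrange equation at $u=\omega_\infty$ reads $R(\omega_\infty)=\bar R$. In other words, the limit is identified as a \emph{fixed point} of the iteration, not directly as a minimizer of $K_\omega$.

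Two further points where your sketch is thinner than it needs to be. First, $d_1$-precompactness in $\cE^1_\omega$ requires an entropy bound in addition to a $d_1$-bound (Lemma~\ref{lem:d1-ent-bound-cpt}); you should say how you get $Ent_\omega(g_i.u_i)\leq A$ (the paper uses $K_\omega(g_i.u_i)=K_\omega(u_i)\leq 0$ together with a linear lower bound on $\cJ_\omega^{-\Ric(\omega)}$). Second, and more seriously for the smooth estimates, your proposal controls $\sigma_i^*\omega_i$ but the coupled equation \eqref{eq:laplace-of-Fi} involves $\sigma_i^*\omega_{i-1}$, for which you only have $d_1$-control via a \emph{different} automorphism $\sigma_{i-1}$. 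The paper handles this by showing that the transition elements $f_i:=g_i^{-1}\circ g_{i+1}$ stay in a bounded subset of $G$ (using $d_1(f_i.0,0)\leq C$ and a compactness property of the $G$-action), so that $g_{i+1}.u_i=f_i.(g_i.u_i)$ inherits the same a priori bounds as $g_i.u_i$. Without this step your bootstrap cannot start, since the ``source'' in the equation for $\sigma_i^*\omega_{i+1}$ is not the object you have estimated.
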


Our results extend those in the previous works \cite{Rub07,Rub08,Kel,BBEGZ19,DR19,Ryan}, where $c_1(X)$ is assumed to be proportional to $\{\omega\}$. Moreover, Theorem \ref{thm:smooth-convergence} also gives strong evidence that the pseudo-Calabi flow shall converge to a cscK metric, if there is one in $\{\omega\}$ (cf. \cite[Conjecture 7.4]{Rub08} and \cite[Question 8.5]{CZ13}). 
In view of Tian's properness conjecture, Theorem \ref{thm:smooth-convergence} also shows that the properness of K-energy (modulo group actions, in the sense of Definition \ref{def:K-proper-modulo-G}) implies 
that one can find a cscK metrics using the dynamical system \eqref{eqn:def-har-Ric-ite}.

Compared to the recent work of Darvas--Rubinstein \cite{DR19} in the Fano case, the main difficulty 
we are faced with is the lack of Ding functional in our general setting. As we shall see, this technicality can be circumvented with the help of the estimates in \cite[\S 3]{CC2}, which are needed for the smooth convergence in Theorem \ref{thm:smooth-convergence}.

For the direction of Ricci iteration in the real case, we refer the reader to \cite{PR19,Ryan,BPR21}. See also \cite{GKY13} for a Ricci iteration in the local setting.

\textbf{Organization.} After recalling some standard notions and facts in \S \ref{sec:functionals} and 
\S \ref{sec:d1}, we prove Theorem \ref{thm:ite-exist} and Theorem \ref{thm:ite-decrease-K-energy} in \S \ref{sec:Ric-ite}. Relying on \cite[\S 3]{CC2}, we will derive some a priori estimates for the Ricci iteration in \S \ref{sec:estimates}, which allows us to prove Theorem \ref{thm:smooth-convergence} in \S \ref{sec:smooth-conv}. Finally in \S \ref{sec:thR-ite} we point out that our work can be extended to the setting of twisted cscK metrics.

\newpage

\textbf{Acknowledgements.} It is the author's great pleasure to dedicate this paper to Prof. Tian, on the occasion of his 65th birthday, for his constant guidance over the years. The author is also grateful to T. Darvas, W. Jian, Y. Rubinstein and Y. Shi for helpful discussions and comments. Part of this work was done during the pleasant and inspiring visit at the Tianyuan Mathematics Research center in Oct. 2023. The author is also grateful to Y. Rubinstein for introducing this problem when the author was visiting the University of Maryland during 2017-2018. 

The author is supported by NSFC grants 12271040, and 12271038.

\section{Energy functionals}
\label{sec:functionals}
We recall several standard functionals that will be used throughout this paper.

Let $(X,\omega)$ be a compact K\"ahler manifold of dimension $n$, and set 
$$
\cH_\omega:=\{\varphi\in C^\infty(X,\RR)|\omega_\varphi:=\omega+\ddc \varphi>0\}.
$$
Put $V:=\int_X\omega^n.$ And let
$$
\Ric(\omega_\varphi):=-\ddc\log\det\omega_\varphi\in 2\pi c_1(X),\ R(\omega_\varphi):=\tr_{\omega_\varphi}\Ric(\omega_\varphi),
$$
$$
\bar R:=\frac{1}{V}\int_XR(\omega)\omega^n=\frac{n}{V}\int_X\Ric(\omega)\wedge\omega^{n-1}=2\pi n\frac{c_1(X)\cdot\{\omega\}^{n-1}}{\{\omega\}^n}.
$$

For any $u,v\in\cH_\omega$, define
$$
I(u,v)=I(\omega_u,\omega_v):=\frac{1}{V}\int_X(v-u)(\omega^n_u-\omega^n_v).
$$

$$
E(u,v):=\frac{1}{(n+1)V}\int_X(v-u)\sum_{i=0}^n\omega^i_u\wedge\omega^{n-i}_v.
$$

$$
J(u,v)=J(\omega_u,\omega_v):=\frac{1}{V}\int_X(v-u)\omega^n_u-E(u,v).
$$

$$
Ent(u,v)=Ent(\omega_u,\omega_v):=\frac{1}{V}\int_X\log\frac{\omega^n_v}{\omega^n_u}\omega^n_v.
$$
Note that by Jensen's inequality, it always holds that $Ent(u,v)\geq 0$.

For any closed $(1,1)$ form $\chi$, define
$$
\cJ^\chi(u,v):=\frac{1}{V}\int_X (v-u)\chi\wedge\sum_{i=0}^{n-1}\omega_u^i\wedge\omega^{n-1-i}_v-\bar\chi E(u,v),
$$
where
$$
\bar \chi:=\frac{n}{V}\int_X\chi\wedge\omega^{n-1}=n\frac{\{\chi\}\cdot\{\omega\}^{n-1}}{\{\omega\}^n}.
$$
The K-energy is defined by
$$
K(u,v)=K(\omega_u,\omega_v):=Ent(u,v)+\cJ^{-\Ric(\omega_u)}(u,v).
$$
The $\chi$-twisted K-energy is
$$
K^\chi(u,v)=K^\chi(\omega_u,\omega_v):=K(u,v)+\cJ^\chi(u,v).
$$

If we choose $\chi:=\omega_u$, then integration by parts gives
\begin{equation}
    \label{eq:cJuv=I-Juv}
    \cJ^{\omega_u}(u,v)=(I-J)(u,v).
\end{equation}
More generally, if $\chi:=\omega_w$ for some $w\in\cH_\omega$, then
$$
\cJ^{\omega_w}(u,v)=(I-J)(u,v)+\frac{1}{V}\int_X(w-u)(\omega^n_v-\omega^n_u),
$$
so in particular we have that
\begin{equation}
    \label{eq:cJuw=-Juw}
    \cJ^{\omega_w}(u,w)=-J(u,w).
\end{equation}

One has the following variation formulas (for any $u,v\in\cH_\omega$ and $f\in C^\infty(X,\RR)$):
\begin{equation}
\label{eq:var-formula}
    \begin{cases}
        \frac{d}{dt}\big|_{t=0}E(u,v+tf)=\frac{1}{V}\int_Xf\omega^n_v,\\
        \frac{d}{dt}\big|_{t=0}\cJ^\chi(u,v+tf)=\frac{1}{V}\int_Xf(\tr_{\omega_v}\chi-\bar\chi)\omega^n_v,\\
        \frac{d}{dt}\big|_{t=0}K^\chi(u,v+tf)=\frac{1}{V}\int_Xf(\bar R-\bar \chi+\tr_{\omega_v}\chi-R(\omega_v))\omega^n_v.\\
    \end{cases}
\end{equation}
They imply the well known cocycle relations (for $u,v,w\in\cH_\omega$):
$$
E(u,v)+E(v,w)=E(u,w).
$$
$$
\cJ^\chi(u,v)+\cJ^\chi(v,w)=\cJ^\chi(u,w).
$$
$$
K^\chi(u,v)+K^\chi(v,w)=K^\chi(u,w).
$$
One can then further deduce the following cocycle relations:
$$
J(u,v)+J(v,w)=J(u,w)+\frac{1}{V}\int_X(v-w)(\omega^n_u-\omega^n_v).
$$
$$
(I-J)(u,v)+(I-J)(v,w)=(I-J)(u,w)+\frac{1}{V}\int_X(v-u)(\omega^n_w-\omega^n_v).
$$

The following result proved in Tian's work \cite{Tian87} will be used repeatedly.
\begin{lemma}
\label{lem:J<I-J<J}
For any $u,v\in\cH_\omega$, it holds that
$$
\frac{1}{n}J(u,v)\leq(I-J)(u,v)\leq nJ(u,v).
$$
Moreover, one has
$$
I(u,v)\geq0,\ J(u,v)\geq0,\ (I-J)(u,v)\geq0.
$$
If one of the inequalities is an equality, then they all are, in which case $u=v$.
\end{lemma}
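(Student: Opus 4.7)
My plan is to reduce the entire lemma to integration by parts on $X$ followed by elementary coefficient comparisons. Setting $\varphi := v - u$ so that $\omega_v = \omega_u + \ddc\varphi$, the key ingredient is the telescoping identity
$$
\omega_u^n - \omega_v^n \;=\; -\,\ddc\varphi \wedge \sum_{j=0}^{n-1}\omega_u^j\wedge\omega_v^{n-1-j}.
$$
Substituting this into the definition of $I(u,v)$ and integrating by parts on the closed manifold $X$ gives
$$
I(u,v) \;=\; \frac{1}{V}\sum_{a=0}^{n-1}T_a,\qquad T_a \,:=\, \int_X \sqrt{-1}\,\partial\varphi\wedge\bar\partial\varphi\wedge \omega_u^a\wedge \omega_v^{n-1-a}.
$$
Each $T_a\geq 0$, since $\sqrt{-1}\,\partial\varphi\wedge\bar\partial\varphi$ is a non-negative $(1,1)$-form and $\omega_u^a\wedge\omega_v^{n-1-a}$ is a positive $(n-1,n-1)$-form; in particular $I\geq 0$.

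To handle $J$, I would rewrite $J(u,v) = \tfrac{1}{(n+1)V}\sum_{i=0}^n\int_X \varphi\,(\omega_u^n - \omega_u^i\wedge \omega_v^{n-i})$, expand each bracket by the same telescoping identity, integrate by parts, and collect terms by total exponent $a$ of $\omega_u$. Counting the pairs $(i,k)$ with $i+k=a$ that arise in the resulting double sum produces exactly $a+1$ contributions to each $T_a$, yielding
$$
J(u,v) \;=\; \frac{1}{(n+1)V}\sum_{a=0}^{n-1}(a+1)\,T_a,\qquad (I-J)(u,v)\;=\;\frac{1}{(n+1)V}\sum_{a=0}^{n-1}(n-a)\,T_a,
$$
from which $J\geq 0$ and $I-J\geq 0$ are immediate. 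The two-sided estimate $\tfrac{1}{n}J\leq I-J\leq nJ$ then reduces to the elementary numerical inequalities $(a+1)\leq n(n-a)$ and $(n-a)\leq n(a+1)$, both valid for $0\leq a\leq n-1$ (equality only at $a=n-1$ and $a=0$ respectively).

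For the equality statement, vanishing of any one of $I$, $J$, or $I-J$ forces all $T_a$ to vanish, since each appears with a strictly positive coefficient in at least one of the three formulas. In particular $T_0 = \int_X \sqrt{-1}\,\partial\varphi\wedge\bar\partial\varphi \wedge \omega_v^{n-1}=0$; as $\omega_v^{n-1}$ is a strictly positive $(n-1,n-1)$-form, this forces $\sqrt{-1}\,\partial\varphi\wedge\bar\partial\varphi\equiv 0$ on $X$, hence $\varphi$ is constant and $\omega_u=\omega_v$, after which $I,J,(I-J)$ all vanish simultaneously. The only step in this plan demanding any genuine care is the combinatorial bookkeeping leading to the coefficient $(a+1)$ in the formula for $J$; everything else is routine.
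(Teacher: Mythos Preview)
The paper does not give its own proof of this lemma; it simply attributes the result to Tian \cite{Tian87} and states it for later use. So there is no in-paper argument to compare against.

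Your argument is correct and is in fact the standard one. The expansion
\[
I(u,v)=\frac{1}{V}\sum_{a=0}^{n-1}T_a,\qquad J(u,v)=\frac{1}{(n+1)V}\sum_{a=0}^{n-1}(a+1)T_a,\qquad (I-J)(u,v)=\frac{1}{(n+1)V}\sum_{a=0}^{n-1}(n-a)T_a
\]
with $T_a=\int_X\sqrt{-1}\,\partial\varphi\wedge\bar\partial\varphi\wedge\omega_u^a\wedge\omega_v^{n-1-a}\ge 0$ is exactly how Tian's inequalities are usually derived, and your coefficient count $(i,k)\mapsto a=i+k$ giving multiplicity $a+1$ is right. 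The numerical checks $n(a+1)-(n-a)=(n+1)a\ge 0$ and $n(n-a)-(a+1)=(n+1)(n-1-a)\ge 0$ confirm the two-sided bound.

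One small remark on the equality case: from $T_0=0$ you correctly conclude that $\varphi=v-u$ is constant, hence $\omega_u=\omega_v$. Strictly speaking this is all the functionals can detect, since $I,J,I-J$ depend only on $\omega_u,\omega_v$; the paper's phrasing ``$u=v$'' should be read as equality of the associated metrics (or of potentials modulo constants). Your conclusion $\omega_u=\omega_v$ is the precise one and is what is actually used elsewhere in the paper.
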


\begin{lemma}
\label{lem:cJ-cJ=J}
    For any $u,v,w\in\cH_\omega$ one has
    $$
    \cJ^{\omega_w}(u,v)-\cJ^{\omega_w}(u,w)=J(v,w).
    $$
    So in particular, $\cJ^{\omega_w}(u,v)\geq\cJ^{\omega_w}(u,w)$, and the equality holds if and only if $v=w$.
\end{lemma}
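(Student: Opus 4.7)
The plan is to derive this identity by applying the cocycle relation for $\cJ^\chi$ with the particular twist $\chi = \omega_w$, and then recognizing one of the resulting terms via the special case \eqref{eq:cJuw=-Juw}. Specifically, I would start from the cocycle identity $\cJ^{\omega_w}(u,v) + \cJ^{\omega_w}(v,w) = \cJ^{\omega_w}(u,w)$, which rearranges to
\[
\cJ^{\omega_w}(u,v) - \cJ^{\omega_w}(u,w) = -\,\cJ^{\omega_w}(v,w).
\]
Next, applying \eqref{eq:cJuw=-Juw} with $u$ replaced by $v$ (this is legitimate since \eqref{eq:cJuw=-Juw} holds for arbitrary potentials with the second and third entries equal to $w$), I get $\cJ^{\omega_w}(v,w) = -J(v,w)$. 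Substituting yields the desired equality.

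For the \emph{in particular} claim, the inequality $\cJ^{\omega_w}(u,v) \geq \cJ^{\omega_w}(u,w)$ together with the equality characterization follows immediately from Lemma \ref{lem:J<I-J<J}, which supplies $J(v,w) \geq 0$ with equality iff $v=w$. I do not anticipate any genuine obstacle: the statement is essentially a packaged consequence of the cocycle property combined with a special evaluation already computed in \eqref{eq:cJuw=-Juw}.
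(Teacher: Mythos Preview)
Your proposal is correct and follows exactly the paper's approach: apply the cocycle relation for $\cJ^{\omega_w}$ to get $\cJ^{\omega_w}(u,v)-\cJ^{\omega_w}(u,w)=-\cJ^{\omega_w}(v,w)$, then invoke \eqref{eq:cJuw=-Juw} to identify this with $J(v,w)$, and finish with Lemma~\ref{lem:J<I-J<J}.
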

\begin{proof}
    Using cocycle relation, we can write (recall \eqref{eq:cJuw=-Juw})
    \begin{equation*}
        \begin{aligned}
            \cJ^{\omega_w}(u,v)-\cJ^{\omega_w}(u,w)=-\cJ^{\omega_w}(v,w)=J(v,w)\\
        \end{aligned}
    \end{equation*}
  to conclude.
\end{proof}

\textbf{Convention.}
Given an energy functional $F\in\{I,J,\cJ^\chi,K,K^\chi\}$ and $u\in\cH_\omega$, we also use the notation
$$
F_{\omega}(u)=F_{\omega}(\omega_u):=F(0,u)\text{ and }E_{\omega}(u):=E(0,u)
$$
in the circumstances where $\omega$ is viewed as a background metric.

\begin{definition}
    The twisted K-energy $K^\chi_\omega$ is said to be proper if there exist $\gamma>0$ and $C>0$ such that
    $$
    K^\chi_\omega(u)\geq\gamma(I_\omega-J_\omega)(u)-C\text{ for all }u\in\cH_\omega.
    $$
\end{definition}

\section{The metric completion}
\label{sec:d1}
We will work with the finite energy space $\cE_\omega^1$ introduced in \cite{GZ07} and use the $d_1$-distance on it introduced by Darvas \cite{Dar15}. They provide useful tools for proving our main result concerning convergence of the Ricci iteration. We briefly recall the machinery, referring to \cite{DarvasSurvey} and references therein for more details.

Let
$$
\mathrm{PSH}_\omega=\{\varphi\in L^1(\omega^n)|\varphi:X\to[-\infty,\infty)\\
\text{ upper semi-continuous, }\omega_\varphi=\omega+\ddc\varphi\geq 0\}.
$$
Following Guedj--Zeriahi \cite[Definition 1.1]{GZ07} we define the subset of full mass potentials:
$$
\cE_\omega:=\big\{\varphi\in\mathrm{PSH}_\omega:\lim_{j\to-\infty}\int_{\{\varphi\leq j\}}(\omega+\ddc\max\{\varphi,j\})^n=0\big\}.
$$
For each $\varphi\in\cE_\omega$, define $\omega^n_\varphi:=\lim_{j\to-\infty}\textbf{1}_{\{\varphi>j\}}(\omega+\ddc\max\{\varphi,j\})^n$. The measure $(\omega+\ddc\max\{\varphi,j\})^n$ is defined by the work of Beford--Taylor \cite{BT82} since $\max\{\varphi,j\}$ is bounded. Consequently, $\varphi\in\cE_\omega$ if and only if $\int_X\omega^n_\varphi=\int_X\omega^n$, justifying the name of $\cE_\omega$.

Next, define a further subset, the space of finite $1$-energy potentials:
$$
\cE_\omega^1:=\big\{\varphi\in\cE_\omega:\int_X|\varphi|\omega^n_\varphi<\infty\big\}.
$$
Consider the following weak Finsler metric on $\cH_\omega$ \cite{Dar15}:
$$
||\xi||_\varphi:=\frac{1}{V}\int_X|\xi|\omega^n_\varphi,\ \xi\in T_\varphi\cH_\omega=C^\infty(X,\RR).
$$
We denote by $d_1$ the associated pseudo-metric and recall the following result characterizing the $d_1$-metric completion of $\cH_\omega$ \cite[Theorem 3.5]{Dar15}:

\begin{theorem}
    $(\cH_\omega,d_1)$ is a metric space whose completion can be identified with $(\cE_\omega^1)$, where
    $$
    d_1(u_0,u_1):=\lim_{k\to\infty}d_1(u_0^k,u_1^k),
    $$
    for any smooth decreasing sequences $\{u_i^k\}_{k\in\NN}\subset\cH_\omega$ converging pointwise to $u_i\in\cE^1_\omega$, $i=0,1$.
\end{theorem}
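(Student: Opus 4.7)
The plan is to first establish a uniform comparability between $d_1$ and a more manageable pluripotential-theoretic quantity on $\cH_\omega$, then to extend both sides of this comparison to $\cE_\omega^1$ in a controlled way. The central technical tool is the weak geodesic segment $t\mapsto u_t$ joining $u_0,u_1\in\cH_\omega$, solving the homogeneous complex Monge--Amp\`ere equation on $X\times[0,1]$ with boundary values $u_0,u_1$; by the work of Chen and Chu--Tosatti--Weinkove this segment lies in an appropriate $C^{1,\bar 1}$ class. Using the length functional $\|\dot u_t\|_{u_t}=\frac{1}{V}\int_X|\dot u_t|\omega^n_{u_t}$ along this geodesic one shows that $d_1(u_0,u_1)$ is realized by its length, yielding an explicit formula for $d_1(u_0,u_1)$ in terms of integrals against Monge--Amp\`ere measures of $u_0$ and $u_1$. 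This both forces non-degeneracy (via Lemma \ref{lem:J<I-J<J}) and gives a two-sided comparison $d_1(u_0,u_1)\asymp (I-J)(u_0,u_1)$ modulo lower-order terms.

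Next I would prove that every $\varphi\in\cE_\omega^1$ is the $d_1$-limit of a smooth decreasing sequence $\varphi^k\in\cH_\omega$. Demailly's regularization supplies smooth decreasing approximants $\varphi^k\searrow\varphi$; Bedford--Taylor continuity then gives weak convergence $\omega^n_{\varphi^k}\to\omega^n_\varphi$, while the finite $1$-energy hypothesis ensures $\int_X|\varphi^k|\omega^n_{\varphi^k}\to\int_X|\varphi|\omega^n_\varphi$. Plugging these into the explicit geodesic-integral expression for $d_1$ yields that $\{\varphi^k\}$ is $d_1$-Cauchy and, for two different decreasing smooth approximants $\varphi^k,\tilde\varphi^k$ of the same $\varphi$, that $d_1(\varphi^k,\tilde\varphi^k)\to 0$. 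This last fact shows that the proposed definition $d_1(u_0,u_1):=\lim_k d_1(u_0^k,u_1^k)$ does not depend on the choice of approximants, embedding $\cE_\omega^1$ as a dense subset of the abstract metric completion of $(\cH_\omega,d_1)$.

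To finish the identification one verifies the converse: every $d_1$-Cauchy sequence $\{u^k\}\subset\cH_\omega$ has a limit in $\cE_\omega^1$. The uniform $d_1$-bound combined with the comparison from the first step gives a uniform bound on $(I-J)(0,u^k)$ and on $\int_X|u^k|\omega^n_{u^k}$. After subtracting suitable normalizing constants, standard $L^1(\omega^n)$-compactness for $\omega$-psh functions extracts a subsequence converging almost everywhere to some $u\in\mathrm{PSH}_\omega$; uniform integrability and lower semicontinuity of the $1$-energy force $u\in\cE_\omega^1$, and the geodesic-integral formula for $d_1$, applied to $(u^k,u^\ell)$ followed by a limit in $\ell$, gives $d_1(u^k,u)\to 0$.

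The main obstacle is the geodesic length formula and the resulting comparability $d_1\asymp(I-J)$ on $\cH_\omega$: this requires the $C^{1,\bar 1}$-regularity of weak geodesics and a careful analysis of the Monge--Amp\`ere operator along them, and it is precisely this step that makes $d_1$ non-degenerate on the smooth locus in the first place. Once this formula is in hand, the passage to $\cE_\omega^1$ follows by a fairly systematic use of monotone regularization and Bedford--Taylor stability; without the geodesic-based identity there is no way to control $d_1$ by quantities that behave well under these limiting procedures, and in particular no way to prove the claimed independence of the defining sequence $\{u_i^k\}$.
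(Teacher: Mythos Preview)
The paper does not prove this statement at all: it is quoted verbatim as a result of Darvas, with the citation \cite[Theorem 3.5]{Dar15}, and no argument is given. So there is nothing in the paper to compare your proposal against.

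That said, your outline is broadly in the spirit of Darvas's original argument, but a couple of points are off. The key comparison in Darvas is not $d_1\asymp (I-J)$ but rather the two-sided estimate $d_1(u_0,u_1)\asymp \int_X|u_0-u_1|(\omega_{u_0}^n+\omega_{u_1}^n)$ (recorded in this paper as Lemma~\ref{lem:d1-comparable-L1}); the $(I-J)$ functional is symmetric only up to lower-order terms and is not the quantity that drives the completion argument. Also, invoking Chu--Tosatti--Weinkove is unnecessary: Darvas's proof proceeds via $\varepsilon$-geodesics and Chen's $C^{1,\bar 1}$ bound, and in fact much of the argument works with bounded envelopes rather than full geodesic regularity. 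Finally, the step ``lower semicontinuity of the $1$-energy forces $u\in\cE_\omega^1$'' hides real work: one needs the monotonicity and continuity properties of the Monge--Amp\`ere energy $E_\omega$ along decreasing sequences, together with the Pythagorean-type identity $d_1(u,v)=d_1(u,u\vee v)+d_1(u\vee v,v)$, to actually pin down the limit inside $\cE_\omega^1$. These are the substantive technical ingredients in \cite{Dar15}, and your sketch glosses over them.
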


Let us now recall several properties of the $d_1$ metric that will be used in this paper.

\begin{lemma}(\cite[Theorem 3]{Dar15})
\label{lem:d1-comparable-L1}
    For any $u,v\in\cE^1_\omega$, one has
    $$
    \frac{1}{C}d_1(u,v)\leq\int_X|u-v|(\omega^n_u+\omega^n_v)\leq C d_1(u,v)
    $$
    for some dimensional constant $C>1$.
\end{lemma}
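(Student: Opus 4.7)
The plan is to establish Darvas' envelope formula for the distance $d_1$ and then combine it with mixed Monge-Amp\`ere comparisons and an orthogonality theorem from pluripotential theory. Write
$$
P := P(u,v) = \sup\bigl\{\psi \in \mathrm{PSH}_\omega : \psi \leq \min(u,v)\bigr\}
$$
for the rooftop envelope, which lies in $\cE^1_\omega$ whenever $u, v$ do. The first step is to prove the identity
$$
d_1(u,v) = \bigl(E_\omega(u) - E_\omega(P)\bigr) + \bigl(E_\omega(v) - E_\omega(P)\bigr)
$$
for $u, v \in \cH_\omega$, by realizing a weak $d_1$-geodesic from $u$ to $v$ as the concatenation of a smooth monotone decrease from $u$ down to $P$ followed by a smooth monotone increase from $P$ up to $v$; along any smooth monotone curve $\varphi_t$, the Finsler length $\int_0^1\|\dot\varphi_t\|_{\varphi_t}\,dt$ equals $\pm\bigl(E_\omega(\varphi_1) - E_\omega(\varphi_0)\bigr)$ by the first variation in \eqref{eq:var-formula}, and length-minimality of this concatenation follows by a convexity argument. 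The identity then extends to $\cE^1_\omega$ by $d_1$-density and continuity of both $E_\omega$ and the envelope under decreasing approximations.

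Since $u \geq P$, expanding $(n+1)V\bigl(E_\omega(u) - E_\omega(P)\bigr) = \sum_{i=0}^n \int(u-P)\,\omega_u^{n-i}\wedge\omega_P^i$ and integrating by parts (as in the proof of Lemma \ref{lem:J<I-J<J}) shows that the sequence $i \mapsto \int(u-P)\,\omega_u^{n-i}\wedge\omega_P^i$ is non-decreasing, whence
$$
\frac{1}{V}\int_X (u - P)\,\omega_u^n \;\leq\; E_\omega(u) - E_\omega(P) \;\leq\; \frac{1}{V}\int_X (u - P)\,\omega_P^n,
$$
and analogously with $v$ in place of $u$. Invoking Darvas' orthogonality theorem -- that $\omega_P^n$ is concentrated on the contact set $\{P=u\}\cup\{P=v\}$, coinciding with $\omega_u^n$ on $\{P=u\}$ and with $\omega_v^n$ on $\{P=v\}\setminus\{P=u\}$ -- and observing that $u + v - 2P = |u-v|$ pointwise on this contact set, the envelope identity then yields
$$
d_1(u,v) \;\asymp\; \frac{1}{V}\int_X |u-v|\,\omega_P^n \;\leq\; \frac{1}{V}\int_X |u-v|\,(\omega_u^n + \omega_v^n),
$$
which is one half of the claim. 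For the reverse direction, decompose $X = \{u \geq v\} \sqcup \{u < v\}$: on $\{u \geq v\}$ one has $(u - v) \leq u - P$, and the monotonicity chain above gives $\int_{\{u\geq v\}}(u-v)\omega_u^n \leq \int_X (u-P)\omega_u^n \leq V\,d_1(u, P) \leq V\,d_1(u,v)$. The remaining three contributions (changing $\omega_u^n$ to $\omega_v^n$, and/or switching to $\{u<v\}$ with $(v-u) \leq v-P$) are handled by the symmetric argument applied to the pair $(v,P)$, together with the analogous formula $d_1(u,v) \asymp \tfrac{1}{V}\int|u-v|\,\omega_M^n$ involving the upper envelope $M := \max(u,v) \in \mathrm{PSH}_\omega$.

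The main obstacle lies in making the first step rigorous at the level of $\cE^1_\omega$, where potentials may be unbounded and Monge-Amp\`ere measures must be interpreted in the Bedford-Taylor-Guedj-Zeriahi (non-pluripolar) sense. Verifying the requisite continuity of $E_\omega$, of the envelope $P(u,v)$, and of the associated Monge-Amp\`ere measures under decreasing approximations -- together with establishing the orthogonality relation for $\omega_{P(u,v)}^n$ -- requires the full pluripotential-theoretic machinery and is the core technical content of \cite{Dar15}.
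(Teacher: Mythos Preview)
The paper does not prove this lemma: it is stated with a citation to \cite[Theorem 3]{Dar15} and used as a black box. There is therefore no ``paper's own proof'' to compare against. What you have written is a sketch of Darvas' original argument, and the overall architecture---the Pythagorean formula $d_1(u,v)=\bigl(E_\omega(u)-E_\omega(P)\bigr)+\bigl(E_\omega(v)-E_\omega(P)\bigr)$ for the rooftop envelope $P=P(u,v)$, the monotonicity of the mixed terms $\int(u-P)\,\omega_u^{n-i}\wedge\omega_P^i$, and the orthogonality relation for $\omega_P^n$---is correct and is indeed how \cite{Dar15} proceeds.

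Two comments on the sketch itself. First, your treatment of the inequality $\int_X|u-v|(\omega_u^n+\omega_v^n)\le C\,d_1(u,v)$ is incomplete: the appeal to an ``analogous formula'' involving $M=\max(u,v)$ is not needed and is not obviously correct as stated. The cross terms can be handled directly with $P$: since on the open set $\{u<v\}$ one has $P=u$ (and symmetrically), the identity $\int|u-v|\,\omega_P^n=\int_{\{u<v\}}(v-u)\omega_u^n+\int_{\{v<u\}}(u-v)\omega_v^n$ already isolates them, and the trivial bound $\int(u-P)\omega_P^n\le(n+1)V\bigl(E_\omega(u)-E_\omega(P)\bigr)$ (the largest summand is at most the total sum) closes the estimate. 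Second, your justification of the Pythagorean formula via ``concatenation of monotone curves and a convexity argument'' understates what is involved: Darvas' proof requires the construction and length computation of weak $C^{1,\bar1}$ geodesics and a careful comparison with the envelope, not merely monotone interpolation. You correctly flag in your final paragraph that this, together with the orthogonality theorem, is the genuine technical core, and that is an honest assessment.
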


\begin{lemma}(\cite[Proposition 5.5]{DR17})
\label{lem:J-d-1-compare}
    There exists a constant $C>1$ depending only on $(X,\omega)$ such that
    $$
    \frac{1}{C}J_\omega(\varphi)-C\leq d_1(0,\varphi)\leq CJ_\omega(\varphi)+C\text{ for any }\varphi\in\cH_0.
    $$
\end{lemma}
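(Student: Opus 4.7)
The plan is to compare both $d_1(0,\varphi)$ and $J_\omega(\varphi)$ to the common mixed-mass quantity
$$
M(\varphi):=\frac{1}{V}\int_X|\varphi|\,(\omega^n+\omega_\varphi^n),
$$
which by Lemma \ref{lem:d1-comparable-L1} is equivalent to $d_1(0,\varphi)$ up to a dimensional multiplicative constant. The problem then reduces to bounding $J_\omega(\varphi)$ and $M(\varphi)$ against one another, up to additive constants depending only on $(X,\omega)$. Throughout I would normalize by $\sup_X\varphi=0$ (the convention I take for $\cH_0$), so $\varphi\le 0$ pointwise.

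Two ingredients drive everything. First, the classical $L^1$-compactness of sup-normalized $\omega$-psh functions supplies a constant $C_0=C_0(X,\omega)$ with $\frac{1}{V}\int_X|\varphi|\,\omega^n\le C_0$. Second, setting $g(i):=\frac{1}{V}\int_X\varphi\,\omega_\varphi^i\wedge\omega^{n-i}$, a one-line integration by parts gives
$$
g(i+1)-g(i)=-\frac{1}{V}\int_X d\varphi\wedge d^c\varphi\wedge\omega_\varphi^i\wedge\omega^{n-1-i}\le 0,
$$
so $g$ is nonincreasing in $i$. Averaging the $g(i)$ and using $\varphi\le 0$ yields the sandwich
$$
\frac{1}{V}\int_X|\varphi|\omega^n\ \le\ -E_\omega(\varphi)\ \le\ \frac{1}{V}\int_X|\varphi|\omega_\varphi^n.
$$

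For the lower bound on $d_1$, I rewrite $J_\omega(\varphi)=-E_\omega(\varphi)-\frac{1}{V}\int_X|\varphi|\omega^n$ and plug in the right-hand side of the sandwich to get $J_\omega(\varphi)\le\frac{1}{V}\int_X|\varphi|\omega_\varphi^n\le M(\varphi)$; Lemma \ref{lem:d1-comparable-L1} then upgrades this to $J_\omega(\varphi)\le C\,d_1(0,\varphi)$, which is in fact stronger than the stated form. For the upper bound, Lemma \ref{lem:J<I-J<J} gives $I_\omega(\varphi)\le(n+1)J_\omega(\varphi)$; using $\varphi\le 0$ this reads
$$
\frac{1}{V}\int_X|\varphi|\omega_\varphi^n=I_\omega(\varphi)+\frac{1}{V}\int_X|\varphi|\omega^n\le (n+1)J_\omega(\varphi)+C_0,
$$
and combined with the uniform bound on $\frac{1}{V}\int_X|\varphi|\omega^n$ this yields $M(\varphi)\le(n+1)J_\omega(\varphi)+2C_0$. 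Lemma \ref{lem:d1-comparable-L1} then delivers $d_1(0,\varphi)\le CJ_\omega(\varphi)+C$.

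The only nontrivial input is the $L^1$-compactness of sup-normalized $\omega$-psh functions, a standard fact (Hartogs' lemma; see e.g.\ \cite{GZ07}). Modulo that, the whole argument is a short chain consisting of one integration by parts to get monotonicity of $g$, the elementary identities expressing $J_\omega$ and $I_\omega$ in terms of $E_\omega$ and the mass integrals, and the two earlier lemmas. I do not anticipate a serious technical obstacle; the main point to be careful about is simply to fix and consistently use the normalization on $\cH_0$ that makes the $L^1$-bound available.
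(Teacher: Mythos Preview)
The paper does not supply its own proof of this lemma; it is quoted verbatim from \cite[Proposition~5.5]{DR17}. So there is nothing to compare against, and the question is simply whether your argument is correct.

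Your chain of inequalities is fine, but there is one genuine mismatch you need to repair: in this paper $\cH_0$ is defined by $E_\omega(\varphi)=0$, \emph{not} by $\sup_X\varphi=0$ (see the line just before Definition~\ref{def:K-proper-modulo-G} and its first appearance in \S\ref{sec:smooth-conv}). Your entire argument hinges on $\varphi\le 0$, which is false for this normalization; in fact $E_\omega(\varphi)=0$ forces $\sup_X\varphi\ge 0$. So as written the proof does not apply to the statement.

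The fix is short. For the lower bound you do not need any sign: when $E_\omega(\varphi)=0$ one has $J_\omega(\varphi)=\frac{1}{V}\int_X\varphi\,\omega^n\le\frac{1}{V}\int_X|\varphi|\,\omega^n\le M(\varphi)\le C\,d_1(0,\varphi)$ directly from Lemma~\ref{lem:d1-comparable-L1}. For the upper bound, write $\varphi=\psi+c$ with $\sup_X\psi=0$ and $c=\sup_X\varphi=-E_\omega(\psi)\ge 0$. Your argument, exactly as you wrote it, gives $d_1(0,\psi)\le C J_\omega(\psi)+C$. Since $J_\omega$ is invariant under adding constants and $d_1(\psi,\varphi)=c$, it remains to bound $c$: from your own identity $-E_\omega(\psi)=J_\omega(\psi)+\frac{1}{V}\int_X|\psi|\omega^n$ and the $L^1$-compactness bound you get $c\le J_\omega(\varphi)+C_0$, hence $d_1(0,\varphi)\le d_1(0,\psi)+c\le (C+1)J_\omega(\varphi)+(C+C_0)$. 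With this adjustment your proof goes through.
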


One can extend the functionals $Ent_\omega, I_\omega, J_\omega, E_\omega$ and $\cJ^\chi_\omega$ to the space $\cE^1_\omega$.

\begin{lemma}(\cite{BDL17})
\label{lem:funtionals-to-E1}
    All the functionals $I_\omega,J_\omega,E_\omega,\cJ_\omega^\chi$ are $d_1$-continuous. The entropy $Ent_\omega$ is $d_1$-lower semi-continuous ($d_1$-lsc). Moreover, for any $u\in\cE^1_\omega$, there exists $\cH_\omega\ni u_i\xrightarrow{d_1}u$ such that $Ent_\omega(u_i)\to Ent_\omega(u)$.
\end{lemma}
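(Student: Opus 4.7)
The plan is to handle each assertion by reducing to smooth decreasing approximants $\varphi_k\searrow\varphi$ with $\varphi_k\in\cH_\omega$, whose $d_1$-density in $\cE^1_\omega$ is guaranteed by the characterization of the completion recalled above, and then to convert $L^1$-style bounds into $d_1$-bounds via Lemma~\ref{lem:d1-comparable-L1}.

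For $E_\omega$, the variational formula~\eqref{eq:var-formula} produces, along any smooth path connecting $u,v\in\cH_\omega$, a Lipschitz estimate of the form $|E_\omega(u)-E_\omega(v)|\leq \tfrac{1}{V}\int_X|u-v|\,\omega^n_{*}$ for certain mixed Monge--Amp\`ere measures, which by Lemma~\ref{lem:d1-comparable-L1} is bounded by $C\,d_1(u,v)$; extending by density yields $d_1$-continuity on $\cE^1_\omega$. The functionals $I_\omega$ and $J_\omega$ are then integrals of the potential difference against $\omega^n_u$ and $\omega^n$, plus $E_\omega$, so the same comparison lemma gives their continuity. For $\cJ^\chi_\omega$, one uses integration by parts in the direction of $\chi$ to reduce the mixed integrand to a sum of terms $\tfrac{1}{V}\int_X(v-u)\,\alpha\wedge\omega_u^i\wedge\omega_v^{n-1-i}$ with $\alpha$ a fixed smooth $(1,1)$-form; each is again controlled by $d_1(u,v)$.

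For the lower semi-continuity of $Ent_\omega$, I would invoke the Legendre-type formula
\[
Ent_\omega(u)=\sup_{f\in C^0(X)}\Big\{\frac{1}{V}\int_Xf\,\omega^n_u-\log\frac{1}{V}\int_Xe^f\omega^n\Big\}.
\]
Once one knows that $\omega^n_{u_k}\to\omega^n_u$ weakly whenever $d_1(u_k,u)\to 0$---a standard consequence of the continuity of $E_\omega$ combined with Bedford--Taylor theory---each functional inside the supremum is $d_1$-continuous, and lsc of $Ent_\omega$ follows from the fact that a supremum of continuous functionals is lsc.

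The genuinely hard part, and the main obstacle, is the final assertion: producing smooth $u_i\in\cH_\omega$ with $u_i\xrightarrow{d_1} u$ \emph{and} $Ent_\omega(u_i)\to Ent_\omega(u)$. Lower semi-continuity only delivers $\liminf Ent_\omega(u_i)\geq Ent_\omega(u)$, and one must carefully engineer the approximation to realize equality. Following the strategy of Berman--Darvas--Lu~\cite{BDL17}, the idea is to solve a family of Monge--Amp\`ere equations $\omega_{u_i}^n=e^{f_i}\omega^n$ with smooth densities $f_i$ chosen to approximate $\log(\omega^n_u/\omega^n)$ in $L^1(\omega^n_u)$ (after reducing to the case $Ent_\omega(u)<\infty$, since otherwise any $d_1$-approximation works by lsc). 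Yau--Ko\l odziej regularity theory then yields $u_i\in\cH_\omega$; a stability estimate for Monge--Amp\`ere equations together with Lemma~\ref{lem:J-d-1-compare} controls $d_1(u_i,u)\to 0$; and the entropies $Ent_\omega(u_i)=\tfrac{1}{V}\int_X f_ie^{f_i}\omega^n$ converge to $Ent_\omega(u)$ by construction. Carrying this step out cleanly is the technical core of~\cite{BDL17}, and I would follow that argument rather than attempt an independent one.
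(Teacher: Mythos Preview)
The paper does not prove this lemma at all: it is stated with a bare citation to \cite{BDL17} and no argument is given. So there is no ``paper's own proof'' to compare against; the lemma functions purely as an imported black box.

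Your sketch is a reasonable outline of the standard approach in \cite{BDL17} and related literature, and you correctly identify the entropy-approximation step as the substantive one. Two small remarks on the continuity portion: first, for $E_\omega$ the paper already records (as \eqref{eq:Euv<d1-uv}) the sharp inequality $|E(u,v)|\leq d_1(u,v)$, which is cleaner than going through the variational formula and Lemma~\ref{lem:d1-comparable-L1}; second, for $I_\omega$, $J_\omega$, $\cJ^\chi_\omega$ your appeal to Lemma~\ref{lem:d1-comparable-L1} alone is a bit too quick, since one needs convergence (not just boundedness) of integrals like $\int_X u_k\,\omega^n_{u_k}$ and mixed terms, which requires finer continuity results for Monge--Amp\`ere measures along $d_1$-convergent sequences---exactly the kind of inputs the paper invokes later in the proof of Lemma~\ref{lem:J-d1-conti} (\cite[Proposition 3.48 and Corollary 3.51]{DarvasSurvey}). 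With those ingredients in place, your outline is correct and matches the literature.
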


We need the following compactness result going back to \cite{BBEGZ19} (see \cite[Theorem 5.6]{DR17} for a convenient formulation for our context).
\begin{lemma}
\label{lem:d1-ent-bound-cpt}
    For any $A>0$, the set
    $$
    \{\varphi\in\cE^1_\omega|d_1(0,\varphi)\leq A\text{ and }Ent_\omega(\varphi)\leq A\}
    $$
    is compact in $\cE^1_\omega$ with respect to the $d_1$-topology.
\end{lemma}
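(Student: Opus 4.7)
Take an arbitrary sequence $\{\varphi_j\}$ in the set; the goal is to extract a $d_1$-convergent subsequence whose limit lies in the same set. First, the bound $d_1(0,\varphi_j)\le A$ together with Lemma \ref{lem:J-d-1-compare} yields a uniform bound on $J_\omega(\varphi_j)$. Standard potential-theoretic bookkeeping (comparing $J_\omega$ with $\sup_X\varphi_j$ via the sub-mean-value property of $\omega$-psh functions) then forces $|\sup_X\varphi_j|$ to remain bounded. Classical $L^1$-compactness of $\omega$-psh functions with uniformly bounded sup produces a subsequence converging in $L^1(\omega^n)$ and almost everywhere to some $\varphi\in\mathrm{PSH}_\omega$.

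Second, I would show that the limit lies in $\cE^1_\omega$ and inherits both bounds. The $d_1$-lower semi-continuity of $J_\omega$ given by Lemma \ref{lem:funtionals-to-E1} keeps $J_\omega(\varphi)$ finite; together with the fact that the approximants live in $\cE^1_\omega$, a Guedj--Zeriahi-type argument prevents the limit Monge--Amp\`ere measure from losing mass on pluripolar sets, so $\varphi\in\cE^1_\omega$. The $d_1$-lsc of the entropy from Lemma \ref{lem:funtionals-to-E1} then yields $Ent_\omega(\varphi)\le A$ automatically, and $d_1(0,\varphi)\le A$ follows from $d_1$-lsc once $d_1$-convergence is established.

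The final step, upgrading weak $L^1$-convergence to $d_1$-convergence, is the essential difficulty. Here I would rely on the Darvas characterization that $\varphi_j\to\varphi$ in $d_1$ is equivalent to weak convergence together with $E_\omega(\varphi_j)\to E_\omega(\varphi)$. Convergence of energies is where the entropy hypothesis becomes indispensable: truncating $\varphi_j$ at level $-k$ and setting $\varphi_j^k:=\max\{\varphi_j,-k\}$, one controls $|E_\omega(\varphi_j)-E_\omega(\varphi_j^k)|$ by the mass of the Monge--Amp\`ere measures carried by $\{\varphi_j<-k\}$. Invoking the Legendre-type inequality $\int f\,d\mu\le Ent(\mu\,|\,\omega^n/V)+\log\int e^f\,\omega^n/V$ for quasi-psh test functions, the uniform entropy bound forbids the mass of $\omega^n_{\varphi_j}$ from escaping to $-\infty$ and yields uniform integrability of the truncated energies. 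Passing $j\to\infty$ with $k$ fixed (using Bedford--Taylor continuity for bounded potentials) and then $k\to\infty$ closes the argument. The heart of the matter, and the main obstacle, is precisely the use of the entropy bound to rule out failure of uniform integrability of the Monge--Amp\`ere measures along the sequence; this is the BBEGZ machinery from \cite{BBEGZ19}.
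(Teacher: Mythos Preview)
The paper does not prove this lemma: it simply records the statement with the prefatory remark that it ``goes back to \cite{BBEGZ19} (see \cite[Theorem 5.6]{DR17} for a convenient formulation for our context)'' and gives no argument. So there is no in-paper proof to compare against; your sketch is in effect an outline of the very BBEGZ proof the paper is citing, and in that sense you are aligned with the paper's approach (by reference) rather than diverging from it.

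As an outline your strategy is the correct one and matches the cited argument: the $d_1$-bound yields a sup-bound and hence $L^1$-compactness of $\omega$-psh functions, producing a candidate limit; the uniform entropy bound is then used, via the Legendre/Young duality you invoke, to obtain uniform integrability of the measures $\omega_{\varphi_j}^n$ and hence convergence of the Monge--Amp\`ere energies; finally Darvas's characterization upgrades $L^1$-convergence plus $E$-convergence to $d_1$-convergence. One minor logical wrinkle in your write-up: in the second paragraph you appeal to the $d_1$-lower semicontinuity of $J_\omega$ and $Ent_\omega$ from Lemma~\ref{lem:funtionals-to-E1} to place the limit in the set \emph{before} $d_1$-convergence has been established, when at that stage you only have $L^1$-convergence. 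The fix is routine---use upper semicontinuity of $E_\omega$ along weak/decreasing limits to land in $\cE^1_\omega$, and postpone the lsc arguments for $Ent_\omega$ and the $d_1$-ball until after $d_1$-convergence is in hand---and does not affect the substance of the plan.
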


For any $u,v\in\cE^1_\omega$, one can also define
$$
I(u,v):=\frac{1}{V}\int_X(v-u)(\omega^n_u-\omega^n_v),
$$
$$
E(u,v):=E_\omega(v)-E_\omega(u),
$$
and
$$
J(u,v):=\frac{1}{V}\int_X(v-u)\omega^n_u-E(u,v).
$$
Recall that (see \cite[Proposition 3.40]{DarvasSurvey})
\begin{equation}
    \label{eq:Euv<d1-uv}
    |E(u,v)|\leq d_1(u,v).
\end{equation}

\begin{lemma}
\label{lem:J-d1-conti}
    Given $u_i,u,v_i,v\in \cE^1_\omega$ such that $u_i\xrightarrow{d_1}u$ and $v_i\xrightarrow{d_1}v$, then
    $$
    \lim_{i\to\infty}I(u_i,v_i)=I(u,v),\ 
    \lim_{i\to\infty}J(u_i,v_i)=J(u,v).
    $$
\end{lemma}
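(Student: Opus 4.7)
The plan is to first reduce joint $d_1$-continuity of $I$ to that of $J$, and then reduce continuity of $J$ to continuity of a single ``mixed'' Monge-Amp\`ere pairing. The key algebraic identity is
$$I(u,v) = J(u,v) + J(v,u),$$
obtained by adding $J(u,v)=\tfrac{1}{V}\int(v-u)\omega_u^n-E(u,v)$ and $J(v,u)=\tfrac{1}{V}\int(u-v)\omega_v^n-E(v,u)$ and using $E(v,u)=-E(u,v)$. So it will suffice to establish joint $d_1$-continuity of $J$.

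Since $E(u,v)=E_\omega(v)-E_\omega(u)$ is jointly $d_1$-continuous by Lemma \ref{lem:funtionals-to-E1}, continuity of $J(u,v)=\tfrac{1}{V}\int(v-u)\omega_u^n-E(u,v)$ reduces to joint continuity of the pairing $(u,v)\mapsto \tfrac{1}{V}\int(v-u)\omega_u^n$. The diagonal part of this pairing is easily handled: the identities $I_\omega(u)=\tfrac{1}{V}\int u\,\omega^n - \tfrac{1}{V}\int u\,\omega_u^n$ and $J_\omega(u)=\tfrac{1}{V}\int u\,\omega^n - E_\omega(u)$ combine to give
$$\tfrac{1}{V}\int_X u\,\omega_u^n = E_\omega(u)+J_\omega(u)-I_\omega(u),$$
which is $d_1$-continuous in $u$ by Lemma \ref{lem:funtionals-to-E1}. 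We are thus reduced to joint $d_1$-continuity of the mixed pairing $P(u,v):=\tfrac{1}{V}\int_X v\,\omega_u^n$, in which integrand and Monge-Amp\`ere measure come from different potentials.

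To attack $P$, I would proceed by truncation. Set $v^N:=\max(v,-N)$ and $v_i^N:=\max(v_i,-N)$, both bounded $\omega$-psh functions, and decompose
$$P(u_i,v_i)-P(u,v) = \tfrac{1}{V}\!\int(v_i - v_i^N)\omega_{u_i}^n + \bigl[P(u_i,v_i^N)-P(u,v^N)\bigr] + \tfrac{1}{V}\!\int(v^N - v)\omega_u^n.$$
For each fixed $N$, the bracketed middle term tends to zero as $i\to\infty$: the truncations $v_i^N, v^N$ are uniformly bounded in $L^\infty$, and along $d_1$-convergent sequences the Monge-Amp\`ere measures $\omega_{u_i}^n$ converge weakly to $\omega_u^n$ (a consequence of the BBEGZ compactness underlying Lemma \ref{lem:d1-ent-bound-cpt}); a parallel argument handles replacing $v_i^N$ by $v^N$ in the integrand. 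The outer tail terms admit the uniform $L^1$ estimate
$$\int|v_i|\omega_{u_i}^n \le \int|v_i-u_i|\omega_{u_i}^n + \int|u_i|\omega_{u_i}^n \le C\bigl(d_1(u_i,v_i)+d_1(u_i,0)\bigr),$$
obtained from Lemma \ref{lem:d1-comparable-L1} and finite because $d_1$-convergent sequences are $d_1$-bounded.

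The main obstacle will be the uniform-in-$i$ smallness of the tail $\int_{\{v_i\le -N\}}|v_i|\omega_{u_i}^n$ as $N\to\infty$. A direct Chebyshev estimate supplies only the measure decay $\omega_{u_i}^n(\{v_i\le -N\})\le C/N$, so to upgrade this to genuine uniform integrability one must invoke a de la Vall\'ee-Poussin type improvement via a higher moment. This is expected to follow from combining the $d_1$-boundedness of the sequence with the entropy/$d_1$-compactness of Lemma \ref{lem:d1-ent-bound-cpt} and standard $\cE^1$-regularity results from the BBEGZ/Darvas framework. Once this uniform integrability is secured, sending $i\to\infty$ first and $N\to\infty$ afterwards yields joint $d_1$-continuity of $P$, and hence of $J$ and $I$, completing the proof.
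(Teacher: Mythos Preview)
Your reduction to the mixed pairing $P(u,v)=\frac{1}{V}\int_X v\,\omega_u^n$ is correct, and the identity $I(u,v)=J(u,v)+J(v,u)$ is a clean way to dispose of $I$. However, the truncation argument for $P$ has a genuine gap that you yourself flag: the uniform-in-$i$ tail smallness $\sup_i\int_{\{v_i\le -N\}}|v_i|\,\omega_{u_i}^n\to 0$ as $N\to\infty$. Your proposed justification via Lemma~\ref{lem:d1-ent-bound-cpt} is misplaced: that lemma requires an entropy bound, and none is available under the bare hypotheses $u_i,v_i\in\cE^1_\omega$ with $d_1$-convergence. A de~la~Vall\'ee--Poussin argument would need a uniform superlinear moment such as $\sup_i\int_X|v_i|^{1+\varepsilon}\omega_{u_i}^n<\infty$, which is simply false for general $\cE^1$ potentials. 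So as written the proof is incomplete, and the vague appeal to ``standard $\cE^1$-regularity results'' at the very end is doing all the work.

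The paper avoids truncation entirely. After handling $|E(u,u_i)|+|E(v,v_i)|$ via \eqref{eq:Euv<d1-uv}, it decomposes
\[
\int_X(v_i-u_i)\omega_{u_i}^n-\int_X(v-u)\omega_u^n
\]
algebraically into five pieces of the shape $\int(v_i-v)(\omega_{u_i}^n-\omega_u^n)$, $\int(u_i-u)(\omega_{u_i}^n-\omega_u^n)$, $\int(|v_i-v|+|u_i-u|)\omega_u^n$, $\int v(\omega_{u_i}^n-\omega_u^n)$, $\int u(\omega_{u_i}^n-\omega_u^n)$, and cites \cite[Proposition~3.48, Corollary~3.51]{DarvasSurvey} and \cite[Lemmas~3.13, 5.8]{BBGZ} to send each to zero. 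Those black-box results already contain the joint $d_1$-continuity of $P$ that you are attempting to re-derive from scratch; invoking them directly is both shorter and rigorous. If you want to salvage your route, the correct tool for the tail is not entropy compactness but the mixed-energy estimates in those same references (e.g., controlling $\int_X|v_i-v|\,\omega_{u_i}^n$ by $I$-type quantities that vanish along $d_1$-convergent sequences), which bypass uniform integrability altogether.
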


\begin{proof}
We only deal with the $J$-functional, since the proof for the $I$-functional is similar.
    One has
    \begin{equation*}
        \begin{aligned}
             |J(u_i,v_i)-J(u,v)|&\leq|\frac{1}{V}\int_X(v_i-u_i)\omega^n_{u_i}-\frac{1}{V}\int_X(v-u)\omega^n_u|+|E(u,u_i)|+|E(v,v_i)|.\\
        \end{aligned}
    \end{equation*}
    By \eqref{eq:Euv<d1-uv} it suffices to estimate $|\int_X(v_i-u_i)\omega^n_{u_i}-\int_X(v-u)\omega^n_u|$, which can be bounded from above by
    \begin{equation*}
    \begin{aligned}
         |\int_X(v_i-v)&(\omega^n_{u_i}-\omega^n_u)|+|\int_X(u_i-u)(\omega^n_{u_i}-\omega^n_u)|+\int_X(|v_i-v|+|u_i-u|)\omega^n_u\\
         &+|\int_Xv(\omega^n_{u_i}-\omega^n_u)|+|\int_Xu(\omega^n_{u_i}-\omega^n_u)|.
    \end{aligned}
    \end{equation*}
    All of these terms go to zero, thanks to \cite[Proposition 3.48 and Corollary 3.51]{DarvasSurvey} (see also \cite[Lemma 3.13, Lemma 5.8]{BBGZ}).
\end{proof}

The next quasi-triangle inquality is proved in \cite[Theorem 1.8]{BBEGZ19}.
\begin{lemma}
\label{lem:triangle-ineq-for-I}
    There exists a dimensional constant $C_n>0$ such that for any $u,v,w\in\cE^1_\omega$
    $$
    I(u,v)\leq C_n(I(u,w)+I(w,v)).
    $$
\end{lemma}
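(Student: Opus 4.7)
The plan is to follow the strategy of Berman--Boucksom--Eyssidieux--Guedj--Zeriahi: reduce to smooth potentials by density, present $I$ as a Dirichlet form, and then control the mixed-type integrals via a telescoping replacement together with Cauchy--Schwarz.

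First, by the $d_1$-continuity of $I$ (Lemma~\ref{lem:J-d1-conti}) and the density of $\cH_\omega$ in $\cE^1_\omega$ for the $d_1$-topology, one may assume $u,v,w\in\cH_\omega$ are smooth; the inequality for $\cE^1_\omega$ then follows by taking limits. For smooth potentials, integration by parts yields the Dirichlet-form representation
$$
I(u,v) \;=\; \frac{1}{V}\sum_{i=0}^{n-1}\int_X d(v-u)\wedge d^c(v-u)\wedge \omega_u^i\wedge \omega_v^{n-1-i}.
$$
Splitting $v-u = (v-w)+(w-u)$ and using the elementary quadratic inequality $|a+b|^2\le 2|a|^2+2|b|^2$ applied to the pointwise semi-positive form $d(v-u)\wedge d^c(v-u)$, one is reduced to dominating, for each $\xi\in\{v-w,\,w-u\}$, the quantity
$$
A(\xi) \;:=\; \frac{1}{V}\sum_{i=0}^{n-1}\int_X d\xi\wedge d^c\xi\wedge \omega_u^i\wedge \omega_v^{n-1-i}
$$
by a dimensional multiple of $I(u,w)+I(w,v)$.

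The key step is a telescoping \emph{replacement} of the mixed currents $\omega_u^i\wedge\omega_v^{n-1-i}$ in favor of the ``correct'' wedge products involving $\omega_w$. Using $\omega_u=\omega_w+\ddc(u-w)$ and $\omega_v=\omega_w+\ddc(v-w)$, one expands binomially so that each factor becomes either $\omega_w$ or a $\ddc$-term. For every resulting $\ddc$-factor, one integrates by parts to produce a bilinear expression
$$
\int_X d\xi\wedge d^c\eta\wedge T,\qquad \eta\in\{u-w,\,v-w\},
$$
where $T$ is a positive $(n-1,n-1)$-current built from $\omega_u,\omega_v,\omega_w$. Cauchy--Schwarz for the semi-positive pairing $(f,g)\mapsto \int df\wedge d^cg\wedge T$ bounds this by the geometric mean of the two diagonal quantities $\int d\xi\wedge d^c\xi\wedge T$ and $\int d\eta\wedge d^c\eta\wedge T$. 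A ``clean'' integral of the form $\sum_i\int d\xi\wedge d^c\xi\wedge\omega_w^i\wedge\omega_\bullet^{n-1-i}$, once one of $\omega_u,\omega_v$ has been fully replaced by $\omega_w$, equals exactly $V\cdot I(w,v)$ (when $\xi=v-w$) or $V\cdot I(u,w)$ (when $\xi=u-w$) after running integration by parts backwards. Iterating this replacement $O(n)$ times and absorbing all cross-terms via Cauchy--Schwarz into $I(u,w)+I(w,v)$ gives the desired bound with a purely dimensional constant $C_n$.

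The main obstacle is the bookkeeping in the telescoping step: one must ensure that every error term produced by moving a $\ddc$ off its wedge factor is controlled by $I(u,w)$ and $I(w,v)$ rather than by $I(u,v)$ (which would be circular), and that the Cauchy--Schwarz geometric means do not degrade into $\sqrt{I(u,w)\cdot I(w,v)}$ terms with the wrong homogeneity. Both issues are resolved by weighted AM--GM ($2ab\le \epsilon a^2+\epsilon^{-1}b^2$ with $\epsilon$ a dimensional constant) applied after each Cauchy--Schwarz step, which keeps all terms on the right-hand side linear in $I(u,w)$ and $I(w,v)$. With this, the proof concludes as in \cite[Theorem~1.8]{BBEGZ19}.
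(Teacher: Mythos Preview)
The paper does not give its own proof of this lemma but simply cites \cite[Theorem 1.8]{BBEGZ19}, and your proposal is precisely a sketch of that argument (density reduction to $\cH_\omega$, Dirichlet-form expression for $I$, telescoping replacement of wedge factors combined with Cauchy--Schwarz). So your approach is correct and coincides with what the paper defers to.
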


We will need the following convergence criterion, which is a simple consequence of \cite[Proposition 2.3]{BBEGZ19}.
\begin{lemma}
    Assume that $u_i, u\in\cE^1_\omega$ such that
    $E_\omega(u_i)=E_\omega(u)=0$ and $d_1(0,u_i)\leq A$ for some $A>0$ independent of $i$. Then
    $$
    u_i\xrightarrow{d_1}u\Leftrightarrow I(u_i,u)\to 0.
    $$
\end{lemma}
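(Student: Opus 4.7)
The plan is to dispatch $(\Rightarrow)$ in one line from $d_1$-continuity of $I$, and to handle $(\Leftarrow)$ by a subsequence-extraction argument using $d_1$-precompactness from \cite[Proposition 2.3]{BBEGZ19}.

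For $(\Rightarrow)$, I would simply invoke Lemma \ref{lem:J-d1-conti}: the functional $I$ is jointly $d_1$-continuous, so $u_i \xrightarrow{d_1} u$ gives $I(u_i, u) \to I(u, u) = 0$.

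For $(\Leftarrow)$, the proof proceeds by contradiction. First, the hypothesis $d_1(0, u_i) \le A$ combined with Lemma \ref{lem:J-d-1-compare} furnishes a uniform bound $J_\omega(u_i) \le C$. Together with the normalization $E_\omega(u_i) = 0$, this confines $\{u_i\}$ to a subset of $\cE^1_\omega$ that is $d_1$-precompact by \cite[Proposition 2.3]{BBEGZ19}. Assuming $d_1(u_i, u) \not\to 0$, I would pass to a subsequence satisfying $d_1(u_{i_k}, u) \ge \varepsilon > 0$ and then, by precompactness, to a further subsequence $d_1$-converging to some $v \in \cE^1_\omega$.

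Applying Lemma \ref{lem:J-d1-conti} along this $d_1$-convergent subsequence, together with the hypothesis $I(u_{i_k}, u) \to 0$, yields $I(v, u) = 0$; Lemma \ref{lem:J<I-J<J} then forces $v = u$, contradicting $d_1(u_{i_k}, u) \ge \varepsilon$. The one delicate point is the $d_1$-precompactness of $d_1$-bounded, $E$-normalized families, for which the cited BBEGZ result is essential (bounded $d_1$-balls are only weakly precompact in general); the remaining ingredients are routine applications of $d_1$-continuity and strict positivity of $I$ off the diagonal.
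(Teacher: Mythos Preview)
Your $(\Rightarrow)$ direction is fine. The gap is in $(\Leftarrow)$: you claim that $E$-normalized, $d_1$-bounded families are $d_1$-precompact and cite \cite[Proposition 2.3]{BBEGZ19} for this, but that proposition is not a precompactness statement. It is a convergence criterion: for sup-normalized potentials, $I(u_i,u)\to 0$ already forces $u_i\xrightarrow{d_1}u$. In fact, $d_1$-bounded sets in $\cE^1_\omega$ are \emph{not} $d_1$-precompact in general; the compactness result recorded in the paper as Lemma~\ref{lem:d1-ent-bound-cpt} requires an additional entropy bound, which is not part of the hypotheses here. So your subsequence-extraction strategy cannot get started, and the sentence you flag as ``the one delicate point'' is precisely where the argument breaks.

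The paper's proof sidesteps compactness entirely. It passes to sup-normalized potentials $u_i':=u_i-\sup_X u_i$ and $u':=u-\sup_X u$ (using the $d_1$-bound only to control $|\sup_X u_i|$ via Lemma~\ref{lem:d1-comparable-L1}), notes that $I(u_i',u')=I(u_i,u)\to 0$, and then applies \cite[Proposition 2.3]{BBEGZ19} \emph{directly} to obtain $u_i'\xrightarrow{d_1}u'$. The $E$-normalization $E_\omega(u_i)=E_\omega(u)=0$ then gives $\sup_X u_i\to\sup_X u$, so $u_i\xrightarrow{d_1}u$. The point you missed is that BBEGZ yields the implication outright under the sup normalization; no subsequence or compactness argument is needed.
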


\begin{proof}
That $u_i\xrightarrow{d_1}u$ implies $I(u_i,u)\to 0$ follows from Lemma \ref{lem:d1-comparable-L1}.

Now assume that $I(u_i,u)\to 0$.
    By Lemma \ref{lem:d1-comparable-L1} the bound on $d_1(0,u_i)$ implies the $L^1$ bound for $u_i,$ which in turn implies a bound on $|\sup_X u_i|$. Put $u_i':=u_i-\sup_X u_i$ and $u':=u-\sup_X u$. Then $I(u_i',u')\to 0$ as well. By \cite[Proposition 2.3]{BBEGZ19} we know that $u_i'\xrightarrow{d_1}u'$ and hence $E_\omega(u_i')=-\sup_X u_i\to E_\omega(u')=-\sup_X u$. Then from $\sup_X u_i\to\sup_X u$ and $u_i'\xrightarrow{d_1}u'$ we deduce that $u_i\xrightarrow{d_1}u$ (using Lemma \ref{lem:d1-comparable-L1}). So we conclude.
\end{proof}

\begin{corollary}
\label{cor:ui-vi-same-d1-lim}
    Given two sequences $u_i, v_i\in\cE^1_\omega$ with $E_\omega(u_i)=E_\omega(v_i)=0$. Assume that $u_i\xrightarrow{d_1}u$ and $I(u_i,v_i)\to 0$. Then $v_i\xrightarrow{d_1}u$ as well.
\end{corollary}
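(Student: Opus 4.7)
The plan is to reduce the corollary to the lemma immediately preceding it, now applied to the pair $(v_i,u)$ in place of $(u_i,u)$. To invoke that lemma I must verify three hypotheses: $E_\omega(u)=0$, the convergence $I(v_i,u)\to 0$, and a uniform bound $d_1(0,v_i)\le A$.

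The first is immediate: by $d_1$-continuity of $E_\omega$ (Lemma \ref{lem:funtionals-to-E1}), the equalities $E_\omega(u_i)=0$ pass to the $d_1$-limit $u$. For the second, the quasi-triangle inequality for $I$ (Lemma \ref{lem:triangle-ineq-for-I}) with intermediate point $u_i$ gives
$$
I(v_i,u)\le C_n\bigl(I(v_i,u_i)+I(u_i,u)\bigr);
$$
the first summand tends to zero by hypothesis, and the second does too because $u_i\xrightarrow{d_1}u$ and $I$ is dominated by $d_1$ (Lemma \ref{lem:d1-comparable-L1}).

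The substantive step is the uniform $d_1$-bound on $v_i$. Using Lemma \ref{lem:triangle-ineq-for-I} once more,
$$
I(0,v_i)\le C_n\bigl(I(0,u_i)+I(u_i,v_i)\bigr),
$$
whose right-hand side is uniformly bounded: the first summand because $d_1(0,u_i)$ is bounded along the convergent sequence $u_i$ and $I\le C\,d_1$ by Lemma \ref{lem:d1-comparable-L1}, the second by hypothesis. Hence $I_\omega(v_i)$ is uniformly bounded, whence so is $J_\omega(v_i)$ by Lemma \ref{lem:J<I-J<J}; Lemma \ref{lem:J-d-1-compare} then yields the desired uniform bound on $d_1(0,v_i)$. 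With all three inputs in hand, the preceding lemma delivers $v_i\xrightarrow{d_1}u$.

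The main obstacle is therefore the uniform $d_1$-bound on $v_i$, which is why one threads through the chain $I\Rightarrow J\Rightarrow d_1$ rather than converting $I$-convergence to $d_1$-convergence in one step. A minor point is that Lemma \ref{lem:J-d-1-compare} is phrased on $\cH_\omega$, so one invokes its standard extension to $\cE^1_\omega$ via smooth decreasing approximations together with the $d_1$-continuity of $J_\omega$ from Lemma \ref{lem:J-d1-conti}.
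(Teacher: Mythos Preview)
Your proof is correct and follows essentially the same route as the paper: bound $I(0,v_i)$ via the quasi-triangle inequality, pass to a $J$-bound and then a $d_1$-bound, show $I(v_i,u)\to 0$ again via the quasi-triangle inequality, and invoke the preceding lemma. You are in fact slightly more careful than the paper in explicitly checking $E_\omega(u)=0$ and in flagging that Lemma \ref{lem:J-d-1-compare} is stated on $\cH_0$ and needs the routine extension to $\cE^1_\omega$.
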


\begin{proof}
    First, that $u_i\xrightarrow{d_1}u$ implies that $I(0,u_i)\leq C$ by Lemma \ref{lem:d1-comparable-L1}. So one has
    $$
    J(0,v_i)\leq I(0,v_i)\leq C_n(I(0,u_i)+I(u_i,v_i))\leq C'.
    $$
    This implies that $d_1(0,v_i)\leq C''$ by Lemma \ref{lem:J-d-1-compare}. Moreover, Lemma \ref{lem:triangle-ineq-for-I} and \ref{lem:d1-comparable-L1} imply that
    $$
    I(v_i,u)\leq C_n(I(v_i,u_i)+I(u_i,u))\to 0.
    $$
    So we conclude from the previous lemma that $v_i\xrightarrow{d_1}u$.
\end{proof}

We will frequently use the space
$$
\cH_0:=\{\varphi\in\cH_\omega|E_\omega(\varphi)=0\}.
$$

Recall that the action of
$$G:=Aut_0(X),$$
the connected component of complex Lie group of holomorphic automorphisms of $X$,
preserves the K\"ahler class, so $G$ naturally acts on $\cH_0$ in the following way.

\begin{lemma}(\cite[Lemma 5.8]{DR17})
    For any $\varphi\in\cH_0$ and $f\in G$, let $f.\varphi\in\cH_0$ be the unique potential such that $f^*\omega_\varphi=\omega_{f.\varphi}$. Then
    $$
    f.\varphi=f.0+f^*\varphi.
    $$
\end{lemma}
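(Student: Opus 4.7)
The plan is to derive the identity in two stages. First I write down both sides as $\omega$-plurisubharmonic potentials for $f^*\omega_\varphi$ and observe they differ only by a constant; second I show that the specific constant must vanish because of the $E_\omega=0$ normalization defining $\cH_0$.

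For the first step, by definition of $f.0\in\cH_0$ we have $f^*\omega=\omega+\ddc(f.0)$, so
\begin{equation*}
f^*\omega_\varphi=f^*\omega+\ddc f^*\varphi=\omega+\ddc\bigl(f.0+f^*\varphi\bigr).
\end{equation*}
On the other hand, by definition of $f.\varphi$, $f^*\omega_\varphi=\omega+\ddc(f.\varphi)$. Subtracting and using that compact pluriharmonic functions are constant, I obtain $f.\varphi=f.0+f^*\varphi+c$ for some real constant $c$. Since $E_\omega(u+c)=E_\omega(u)+c$ and $E_\omega(f.\varphi)=0$, it suffices to prove that $E_\omega(f.0+f^*\varphi)=0$.

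For the second step, I use the cocycle relation $E(0,f.0+f^*\varphi)=E(0,f.0)+E(f.0,f.0+f^*\varphi)$, so that
\begin{equation*}
E_\omega(f.0+f^*\varphi)=E(f.0,f.0+f^*\varphi),
\end{equation*}
the first summand being zero by $f.0\in\cH_0$. Expanding the definition of $E$ and using $\omega_{f.0}=f^*\omega$ and $\omega_{f.0+f^*\varphi}=f^*\omega_\varphi$, the remaining term becomes
\begin{equation*}
E(f.0,f.0+f^*\varphi)=\frac{1}{(n+1)V}\int_X f^*\varphi\sum_{i=0}^n(f^*\omega)^i\wedge(f^*\omega_\varphi)^{n-i}.
\end{equation*}
Since $f\in G=\mathrm{Aut}_0(X)$ is a biholomorphism, it preserves orientation and has degree one, so pulling back under $f$ leaves the integral invariant. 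Therefore this expression equals $E_\omega(\varphi)$, which vanishes because $\varphi\in\cH_0$. Hence $c=0$ and $f.\varphi=f.0+f^*\varphi$.

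There is no real obstacle here; the only point that needs mild care is keeping track of the $E_\omega=0$ normalization and invoking the change-of-variables under $f$, which is clean because $f$ is a biholomorphism.
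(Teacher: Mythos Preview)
Your argument is correct. The paper does not actually supply a proof of this lemma; it is simply quoted from \cite[Lemma 5.8]{DR17}, so there is no in-paper proof to compare against. Your two-step derivation---first matching the two potentials up to an additive constant via $\ddc$, then killing the constant by the cocycle property of $E$ together with the change-of-variables $\int_X f^*\alpha=\int_X\alpha$ for the biholomorphism $f$---is exactly the standard route and is carried out cleanly.
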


\begin{lemma}
\label{lem:I&J-G-inv}
    For any $u,v\in\cH_0$ and $f\in G$, one has
    $$
    I(u,v)=I(f.u,f.v),\ J(u,v)=J(f.u,f.v).
    $$
\end{lemma}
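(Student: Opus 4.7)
The plan is to reduce the identity to the invariance of integrals under the biholomorphism $f$. First, using the preceding lemma, I would write
$$
f.v - f.u = (f.0 + f^*v) - (f.0 + f^*u) = f^*(v-u),
$$
so the $f.0$ term cancels and what remains is a genuine pullback of functions. Second, the defining relation $f^*\omega_\varphi = \omega_{f.\varphi}$ gives, after wedging,
$$
\omega_{f.u}^{i}\wedge\omega_{f.v}^{n-i} = f^*\bigl(\omega_u^{i}\wedge\omega_v^{n-i}\bigr)
$$
for every $i$; in particular $\omega_{f.u}^n = f^*\omega_u^n$ and $\omega_{f.v}^n = f^*\omega_v^n$.

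Combining these two observations, each of the integrands appearing in the definitions of $I(f.u,f.v)$, $E(f.u,f.v)$ and the raw pairing $\frac{1}{V}\int_X (f.v-f.u)\,\omega_{f.u}^n$ is literally the pullback $f^*\alpha$ of the corresponding integrand $\alpha$ built from $u,v$. Since $f\in G=\mathrm{Aut}_0(X)$ is a biholomorphism, hence an orientation-preserving diffeomorphism, the change-of-variables formula yields $\int_X f^*\alpha = \int_X \alpha$ for every integrable $2n$-form. Applying this to each integrand immediately gives $I(f.u,f.v)=I(u,v)$ and $E(f.u,f.v)=E(u,v)$, and then
$$
J(f.u,f.v) = \frac{1}{V}\int_X (f.v-f.u)\,\omega_{f.u}^n - E(f.u,f.v) = \frac{1}{V}\int_X (v-u)\,\omega_u^n - E(u,v) = J(u,v).
$$

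I do not expect any serious obstacle: once the decomposition $f.\varphi = f.0 + f^*\varphi$ is in hand, everything is a routine pullback computation. The only conceptual point worth flagging is that the normalization $u,v\in\cH_0$ is not actually needed for the conclusion, but it explains why the awkward additive constant $f.0$ is harmless in this setting (it drops out of the difference and is independent of $u,v$).
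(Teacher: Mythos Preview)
Your proof is correct and follows the same pullback argument as the paper. The only cosmetic difference is in the treatment of $J$: the paper uses the normalization $u,v\in\cH_0$ to write $J(u,v)=\frac{1}{V}\int_X(v-u)\,\omega_u^n$ (since $E(u,v)=E_\omega(v)-E_\omega(u)=0$) and then pulls back directly, whereas you keep the $E$ term and verify $E(f.u,f.v)=E(u,v)$ separately. Your version is marginally more general, as you note, but the underlying idea is identical.
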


\begin{proof}
    One has
    \begin{equation*}
        \begin{aligned}
            I(u,v)&=\frac{1}{V}\int_X(v-u)(\omega^n_u-\omega^n_v)\\
            &=\frac{1}{V}\int_X(f^*v-f^*u)(f^*\omega^n_u-f^*\omega^n_v)\\
            &=I(f^*\omega_u,f^*\omega_v)=I(\omega_{f.u},\omega_{f.v})=I(f.u,f.v).\\
        \end{aligned}
    \end{equation*}
    For $J$-functional, we can write
    \begin{equation*}
        \begin{aligned}
            J(u,v)&=\frac{1}{V}\int_X(v-u)\omega^n_u=\frac{1}{V}\int_X(f^*v-f^*u)f^*\omega^n_u\\
            &=\frac{1}{V}(f^*v+f.0-f^*u-f.0)\omega^n_{f.u}\\
            &=\frac{1}{V}\int_X(f.v-f.u)\omega^n_{f.u}=J(f.u,f.v).\\
        \end{aligned}
    \end{equation*}
\end{proof} 

Finally, we recall that
$G$ acts on $\cH_0$ isometrically.
\begin{lemma}(\cite[Lemma 5.9]{DR17})
    For any $u,v\in\cH_0$ and $f\in G$ one has
    $$
    d_1(u,v)=d_1(f.u,f.v).
    $$
\end{lemma}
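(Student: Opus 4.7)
The plan is to exploit the Finsler definition of $d_1$ and show that the $G$-action preserves the Finsler norm at every point, which then forces it to preserve the length of any smooth path and hence the induced pseudo-distance.

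First, I would observe that the preceding lemma lets me extend the action $\Phi_f : \varphi \mapsto f.\varphi = f.0 + f^*\varphi$ from $\cH_0$ to all of $\cH_\omega$: since $\omega_{f.\varphi} = f^*\omega_\varphi > 0$, the image sits in $\cH_\omega$, and $\Phi_f$ is a smooth bijection with inverse $\Phi_{f^{-1}}$. For any smooth path $\varphi_t \in \cH_\omega$ connecting $u$ and $v$, the image $\Phi_f(\varphi_t) = f.0 + f^*\varphi_t$ is a smooth path from $f.u$ to $f.v$, and since $f.0$ does not depend on $t$, its time derivative is simply $\partial_t \Phi_f(\varphi_t) = f^*\dot\varphi_t$.

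Second, I would verify pointwise invariance of the weak Finsler norm. Using $\omega^n_{f.\varphi_t} = (f^*\omega_{\varphi_t})^n = f^*\omega^n_{\varphi_t}$ together with the change-of-variables formula for the biholomorphism $f:X\to X$, I compute
$$
\|f^*\dot\varphi_t\|_{f.\varphi_t} = \frac{1}{V}\int_X |f^*\dot\varphi_t|\, f^*\omega^n_{\varphi_t} = \frac{1}{V}\int_X f^*\!\bigl(|\dot\varphi_t|\,\omega^n_{\varphi_t}\bigr) = \frac{1}{V}\int_X |\dot\varphi_t|\,\omega^n_{\varphi_t} = \|\dot\varphi_t\|_{\varphi_t}.
$$
Integrating in $t$, the Finsler length of $\varphi_t$ equals the Finsler length of $\Phi_f(\varphi_t)$.

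Third, taking the infimum over all smooth paths in $\cH_\omega$ joining $u$ to $v$ yields $d_1(f.u, f.v) \leq d_1(u,v)$, and applying the same argument to $f^{-1} \in G$ produces the reverse inequality. Since $\cH_0$ is preserved by the $G$-action (the action is defined so that $f.\varphi \in \cH_0$), this gives the desired equality for $u,v\in\cH_0$.

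There is no real analytic difficulty here; the only thing to be careful about is that $d_1$ is defined by infimizing path lengths over all of $\cH_\omega$ (not only $\cH_0$), so one must work with $\Phi_f$ on the full space and only restrict to $\cH_0$ at the end. The main "obstacle" is purely bookkeeping: correctly identifying the velocity of the transported path and invoking the change-of-variables formula. Everything else is immediate from the preceding lemma.
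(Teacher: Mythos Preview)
The paper does not supply its own proof of this lemma; it merely cites \cite[Lemma 5.9]{DR17}. Your argument is correct and is precisely the standard proof: pull back a path by $f$, observe that the weak Finsler norm is preserved by the change-of-variables formula, and conclude that path lengths (hence the infimum $d_1$) are preserved.
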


Then define (as in \cite{DR17})
\begin{equation}
    \label{eq:def-d-G}
    d_{1,G}(u,v):=\inf_{f\in G}d_1(u,f.v).
\end{equation}

\begin{definition}
\label{def:K-proper-modulo-G}
    The K-energy $K_\omega$ is said to be proper modulo $G$ if there exist $\gamma>0$ and $C>0$ such that
    $$
    K_\omega(u)\geq\gamma d_{1,G}(0,u)-C\text{ for all }u\in\cH_0.
    $$
\end{definition}

\section{Basic properties of the Ricci iteration}
\label{sec:Ric-ite}

In this part, we prove Theorem \ref{thm:ite-exist} and Theorem \ref{thm:ite-decrease-K-energy}.

We begin by introducing the following analytic threshold.
\begin{equation}
    \label{eq:def-gamma}
    \gamma(X,\{\omega\}):=\sup\bigg\{\gamma\in\RR\bigg|\inf_{\varphi\in\cH_\omega}(K_{\omega}(\varphi)-\gamma(I_\omega-J_\omega)(\varphi))>-\infty\bigg\},
\end{equation}

\begin{lemma}
\label{lem:gamma-indpd-of-metric}
    The threshold $\gamma(X,\{\omega\})$ satisfies $\gamma(X,\{\omega\})>-\infty$ and is independent of the choice of
$\omega$ in its cohomology class, hence the notation.
\end{lemma}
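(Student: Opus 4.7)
The plan is to handle the two assertions of the lemma---independence of the base metric and finiteness---separately.

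For independence, I fix $\psi\in\cH_\omega$ and set $\omega':=\omega_\psi$. The map $u'\mapsto\varphi:=u'+\psi$ is a bijection $\cH_{\omega'}\to\cH_\omega$ under which $\omega'_{u'}=\omega_\varphi$. Subtracting the cocycle for $\cJ^\chi$ from the cocycle for $K^\chi$ produces a cocycle for $K=K^\chi-\cJ^\chi$, from which $K_{\omega'}(u')=K_\omega(\varphi)-K_\omega(\psi)$, differing from $K_\omega(\varphi)$ only by the fixed constant $K_\omega(\psi)$. For $(I-J)$, a direct computation using the identity $(I-J)(a,b)=E(a,b)-\tfrac{1}{V}\int(b-a)\omega_b^n$ and the $E$-cocycle yields
\[
(I_{\omega'}-J_{\omega'})(u')-(I_\omega-J_\omega)(\varphi)=\tfrac{1}{V}\int_X\psi\,\omega_\varphi^n-E_\omega(\psi),
\]
whose absolute value is uniformly bounded by $\|\psi\|_{L^\infty}+|E_\omega(\psi)|$, independently of $\varphi$. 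Hence for every $\gamma\in\RR$ the two functions $K_{\omega'}(u')-\gamma(I_{\omega'}-J_{\omega'})(u')$ and $K_\omega(\varphi)-\gamma(I_\omega-J_\omega)(\varphi)$ differ (under the bijection) by a quantity that is uniformly bounded in the argument. Their infima are thus simultaneously finite, so $\gamma(X,\{\omega\})=\gamma(X,\{\omega_\psi\})$ for every such $\psi$, hence for any two K\"ahler metrics in the same class.

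For finiteness, it suffices to exhibit a sequence $\varphi_t\in\cH_\omega$ with $(I_\omega-J_\omega)(\varphi_t)\to\infty$ along which $K_\omega(\varphi_t)$ grows at most linearly in $(I_\omega-J_\omega)(\varphi_t)$: any $\gamma$ above the limsup of the ratio will force $K_\omega(\varphi_t)-\gamma(I_\omega-J_\omega)(\varphi_t)\to-\infty$ and be excluded. I fix any non-constant smooth $f:X\to\RR$ with $\int_Xf\omega^n=0$ and, for $t>0$, let $\varphi_t\in\cH_\omega$ be the supremum-normalized K\"ahler potential solving Yau's equation
\[
\omega_{\varphi_t}^n=\frac{e^{tf}}{Z_t}\,\omega^n,\qquad Z_t:=\tfrac{1}{V}\int_Xe^{tf}\omega^n.
\]
Kolodziej's $L^\infty$ estimate gives $\|\varphi_t\|_{L^\infty}=O(t)$, bounding both the linear functional $\cJ^{-\Ric(\omega)}_\omega(\varphi_t)=O(t)$ and $(I_\omega-J_\omega)(\varphi_t)=O(t)$. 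Laplace's method applied to $Z_t$ and to $\int_X fe^{tf}\omega^n$ shows that $Ent_\omega(\varphi_t)=O(\log t)$, so $K_\omega(\varphi_t)=O(t)$; the matching lower bound $(I_\omega-J_\omega)(\varphi_t)\gtrsim t$ follows from the concentration of $\omega_{\varphi_t}^n$ near $\{f=\max f\}$, which forces $\varphi_t$ to have oscillations of order $t$. The ratio is therefore bounded above by a constant depending only on $(X,\omega)$ and $f$, proving $\gamma(X,\{\omega\})<\infty$.

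The principal subtlety is establishing the two-sided bound $(I_\omega-J_\omega)(\varphi_t)\asymp t$: upper bounds come directly from Kolodziej, while the matching lower bound needs the concentration behavior of the Monge--Amp\`ere solutions and the decomposition $(I-J)(\varphi)=E_\omega(\varphi)-\tfrac{1}{V}\int\varphi\,\omega_\varphi^n$. All remaining steps reduce to bookkeeping with the cocycle relations of \S\ref{sec:functionals} and standard pluripotential-theoretic estimates.
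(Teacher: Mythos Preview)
Your treatment of the independence part is correct and essentially the same as the paper's: both arguments boil down to the cocycle relation for $K$ together with the observation that $(I-J)$ changes by a uniformly bounded amount under change of basepoint.

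For finiteness, however, you have proved the wrong inequality. The threshold is a supremum, so ``finite'' has two halves: $\gamma(X,\{\omega\})>-\infty$ (the defining set is nonempty) and $\gamma(X,\{\omega\})<+\infty$. Your entire argument---constructing $\varphi_t$ via Yau's equation and bounding the ratio $K_\omega/(I_\omega-J_\omega)$ from above---addresses only $\gamma<+\infty$. You never show $\gamma>-\infty$, and that is precisely what the paper proves and what is actually used downstream (in the proof of Theorem~\ref{thm:ite-exist} one needs some $\gamma_0\in\RR$ with $K_\omega+\tfrac{1}{\tau}(I_\omega-J_\omega)$ proper whenever $-1/\tau<\gamma_0$). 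The paper's argument for $\gamma>-\infty$ is a two-line estimate: since $Ent_\omega\ge 0$ one has $K_\omega(\varphi)\ge \cJ_\omega^{-\Ric(\omega)}(\varphi)\ge -C\,d_1(0,\varphi)$ for $\varphi\in\cH_0$, and then $d_1\asymp J_\omega\asymp (I_\omega-J_\omega)$ (Lemmas~\ref{lem:J-d-1-compare} and~\ref{lem:J<I-J<J}) gives $K_\omega+C'(I_\omega-J_\omega)\ge -C''$ for some large $C'$.

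Separately, your sketch for $\gamma<+\infty$ has its own gaps. Kolodziej's estimate, applied to the density $e^{tf}/Z_t$ whose $L^p$ norm grows only polynomially in $t$, is more likely to yield $\|\varphi_t\|_{L^\infty}=O(\log t)$ than $O(t)$; if so, your claimed two-sided bound $(I_\omega-J_\omega)(\varphi_t)\asymp t$ cannot hold, since $(I-J)(\varphi)\le C\|\varphi\|_{L^\infty}$. The ``concentration forces oscillation of order $t$'' step is asserted, not proved, and the decomposition $(I-J)(\varphi)=E_\omega(\varphi)-\tfrac{1}{V}\int\varphi\,\omega_\varphi^n$ does not by itself convert an oscillation lower bound into an $(I-J)$ lower bound. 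In short: the direction you attempted is both unnecessary for the paper and not established by your outline, while the direction the paper needs is missing from your proof.
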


\begin{proof}
That $\gamma(X,\{\omega\})>-\infty$ follows from
$$
K_\omega(\varphi)\geq\cJ_\omega^{-\Ric(\omega)}(\varphi)\geq-Cd_1(0,\varphi),\ \varphi\in\cH_0,
$$
where we used \cite[(4.2)]{CC2}. So, by Lemma \ref{lem:J-d-1-compare} and Lemma \ref{lem:J<I-J<J}, one can find $C\gg0$ such that $K_\omega(\varphi)+C(I_\omega-J_\omega)(\varphi)\geq-C$ for all $\varphi\in\cH_\omega$.

To show that $\gamma(X,\{\omega\})$ does not depend on the choice of $\omega$, we use the cocycle relations recalled in \S \ref{sec:functionals}. It suffices to note the estimate
$$
|(I-J)(u,w)-(I-J)(v,w)|\leq|(I-J)(u,v)|+\frac{1}{V}\int_X|u-v|(\omega^n_w+\omega^n_v)\leq C||u-v||_{C^0},
$$
for any $u,v,w\in\cH_\omega$, where $C>0$ is a dimensional constant.
\end{proof}

\begin{proof}[Proof of Theorem \ref{thm:ite-exist}]
Taking trace of \eqref{eqn:def-har-Ric-ite} we see that
\begin{equation}
    \label{eqn:tcscK-eq}
    R(\omega_{i+1})=\bar R+\frac{\tr_{\omega_{i+1}}\omega_i-n}{\tau},
\end{equation}
which is a twisted cscK equation. As we now argue, given any $\omega_i\in\{\omega\}$, the existence of $\omega_{i+1}\in\{\omega\}$ solving \eqref{eqn:tcscK-eq} is guaranteed by the main result in \cite{CC2} (see also \cite{Hash19}), once $\tau$ is chosen to be small enough.

Indeed, for any $\gamma<\gamma(X,\{\omega\})$ and any K\"ahler form $\alpha\in\{\omega\}$, the twisted K-energy $K_\alpha-\gamma(I_\alpha-J_\alpha)$ is proper by Lemma \ref{lem:gamma-indpd-of-metric}.
Now choosing $\tau_0\in(0,\infty]$ so that $-1/\tau_0\leq\gamma(X,\{\omega\})$, we see that for any $\tau\in(0,\tau_0)$ and $\omega_i\in\{\omega\}$, the $\frac{\omega_i}{\tau}$-twisted K-energy 
$$
K^{\frac{\omega_i}{\tau}}_{\omega_i}=K_{\omega_i}+\cJ^{\frac{\omega_i}{\tau}}_{\omega_i}=K_{\omega_i}+\frac{1}{\tau}(I_{\omega_i}-J_{\omega_i})
$$
is proper (here we used \eqref{eq:cJuv=I-Juv}), which implies the solvability of \eqref{eqn:tcscK-eq} by \cite[Theorem 4.1]{CC2}. Moreover, such $\omega_{i+1}$ is uniquely determined, by \cite[Theorem 4.13]{BDL17}. This completes the proof of Theorem \ref{thm:ite-exist}.
It is also clear that one can take $\tau_0=\infty$ once $\gamma(X,\{\omega\})\geq0$.

\end{proof}

\begin{proof}[Proof of Theorem \ref{thm:ite-decrease-K-energy}]
Notice that $\omega_{i+1}$ minimizes the twisted K-energy $K_{\omega_i}^{\frac{\omega_i}{\tau}}$ (see \cite[Corollary 4.5]{CC2}), so that
$$
K_{\omega_i}(\omega_{i+1})+\frac{1}{\tau}(I_{\omega_i}-J_{\omega_i})(\omega_{i+1})=K^{\frac{\omega_i}{\tau}}_{\omega_i}(\omega_{i+1})\leq K^{\frac{\omega_i}{\tau}}_{\omega_i}(\omega_i)=0.
$$
This implies that
$$
K_{\omega}(\omega_{i+1})-K_{\omega}(\omega_i)=K_{\omega_i}(\omega_{i+1})\leq-\frac{1}{\tau}(I_{\omega_i}-J_{\omega_i})(\omega_{i+1})\leq0,
$$
thanks to the cocycle property of the K-energy.

When the equality holds for some $i$, one has $(I_{\omega_i}-J_{\omega_i})(\omega_{i+1})=0$, which means that $\omega_i=\omega_{i+1}$. Then \eqref{eqn:tcscK-eq} shows that $\omega_i=\omega_{i+1}$ are both cscK metrics. Moreover, from
$$
R(\omega_i)=\bar R+\frac{\tr_{\omega_i}\omega_{i-1}-n}{\tau}
$$
we get $\tr_{\omega_i}\omega_{i-1}=n$, and hence $\omega_{i-1}$ is harmonic with respect to $\omega_i$. This forces that $\omega_{i-1}=\omega_i$, by the uniqueness of harmonic forms. So we eventually see that $\omega_0=\omega_i$ for all $i$, which is a fixed cscK metric. Thus we conclude Theorem \ref{thm:ite-decrease-K-energy}.
\end{proof}

\section{A priori estimates of the Ricci iteration}
\label{sec:estimates}

In this part we derive some a priori estimates for the iteration sequence \eqref{eqn:def-har-Ric-ite}. By Theorem \ref{thm:ite-exist} we can find some $\tau>0$ so that the iteration carries on forever. Up to scaling the K\"ahler class, we assume without loss of generality that $\tau=1$. Taking trace of \eqref{eqn:def-har-Ric-ite} we then have
$$
R(\omega_{i+1})=\bar R-n+\tr_{\omega_{i+1}}\omega_i,\ \omega_0=\omega.
$$
Write
$$
\omega_{i}=\omega+\ddc u_i, u_i\in\cH_0.
$$
Also, let $F_i\in C^\infty(X,\RR)$ be such that
\begin{equation}
    \label{eq:def-F-i-by-vol-form}
    (\omega+\ddc u_i)^n=e^{F_i}\omega^n.
\end{equation}
Then
\begin{equation}
    \label{eq:laplace-of-Fi}
    \Delta_{\omega_{i}}F_i=\tr_{\omega_i}(\Ric(\omega)-\omega_{i-1})+n-\bar R.
\end{equation}
In other words,
\begin{equation*}
    \label{eq:laplace-of-Fi'}
    \Delta_{\omega_{i}}(F_i+u_{i-1})=\tr_{\omega_i}(\Ric(\omega)-\omega)+n-\bar R.
\end{equation*}
Therefore, we are in the situation considered in \cite[\S 3]{CC2}.

We first derive the $C^0$ bound for $u_i$ and $F_i$.
\begin{proposition}
\label{prop:C0}
    Assume that there is some constant $A>0$ such that
    $$
    Ent_\omega(u_i)+d_1(0,u_i)\leq A\text{ for all }i\in\NN.
    $$
    Then there exists some constant $B_1$ depending only on $X,\omega$ and $A$ such that
    $$
    |F_i|+|u_i|\leq B_1\text{ for all }i\in\NN.
    $$
\end{proposition}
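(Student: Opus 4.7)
The plan is to recognize the $i$-th step of the iteration as a coupled Monge-Amp\`ere--Laplacian system of the form handled by the a priori estimates of Chen-Cheng \cite[\S 3]{CC2}, and to apply those estimates with constants uniform in $i$. Setting $\tilde F_i := F_i + u_{i-1}$, the system \eqref{eq:def-F-i-by-vol-form}--\eqref{eq:laplace-of-Fi} takes the form
\begin{equation*}
\begin{cases}
(\omega + \ddc u_i)^n = e^{\tilde F_i - u_{i-1}}\omega^n, \\
\Delta_{\omega_i}\tilde F_i = \tr_{\omega_i}(\Ric(\omega) - \omega) + n - \bar R.
\end{cases}
\end{equation*}
The right-hand side of the Laplacian equation depends only on fixed data of $(X,\omega)$ and the new metric $\omega_i$, while $u_{i-1}$ enters the coupled system only as an additive perturbation in the exponent of the Monge-Amp\`ere equation. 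This is exactly the Chen-Cheng setup, once that perturbation is controlled uniformly.

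Next, I would match the hypotheses of the Chen-Cheng $C^0$ estimate with the assumptions of the proposition. The bound $Ent_\omega(u_i)\leq A$ supplies the entropy input $\int_X F_i e^{F_i}\omega^n \leq AV$ directly, while $d_1(0,u_i)\leq A$ combined with $E_\omega(u_i)=0$ gives a uniform $L^1(\omega^n)$ bound on $u_i$ (by Lemma \ref{lem:d1-comparable-L1}), and hence a uniform two-sided bound on $\sup_X u_i$ via standard pluripotential theory (cf.\ \cite{DarvasSurvey}). The same hypothesis applied at index $i-1$ yields a uniform bound on $\sup_X u_{i-1}$, which in turn gives uniform Skoda-type $L^p$ integrability of $e^{-u_{i-1}}$ in $i$.

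Feeding these uniform ingredients into the pluripotential/Moser scheme of \cite[\S 3]{CC2} then produces
$$\|u_i\|_{C^0} + \|\tilde F_i\|_{C^0} \leq C(A,X,\omega),$$
and subtracting the uniformly bounded perturbation $u_{i-1}$ yields the desired bound on $|F_i|$. The main obstacle is verifying that the constants in the Chen-Cheng argument depend on $u_{i-1}$ only through its $L^\infty$ bound rather than through higher-order norms of $u_{i-1}$: this requires inspecting the maximum principle and Moser iteration steps of \cite[\S 3]{CC2} and checking that a bounded additive weight in the exponent of the Monge-Amp\`ere equation does not disturb those estimates. Once this stability is confirmed, the constant $B_1$ depends only on $A$, $X$, and $\omega$, uniformly in $i$, completing the proof.
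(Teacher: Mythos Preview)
Your approach is essentially the paper's: regroup to $\tilde F_i = F_i + u_{i-1}$, feed the uniform entropy, $\sup$, and Skoda data into \cite[\S 3]{CC2}, and read off the $C^0$ bounds. One clarification is worth making. In your final paragraph you frame the ``main obstacle'' as checking that the Chen--Cheng constants depend on $u_{i-1}$ only through its $L^\infty$ norm. In fact the dependence is weaker: \cite[Corollary 3.2]{CC2} applied to the pair $(u_i,\tilde F_i)$ needs only the entropy of $F_i$, a bound on $\sup_X u_i$, and the Skoda integral $\int_X e^{-p u_{i-1}}\omega^n$ --- exactly the quantity you already controlled. This yields $|u_i|\leq C_3$ and $\tilde F_i\leq C_3$ for all $i$ directly, with no circularity and no need to assume an $L^\infty$ bound on $u_{i-1}$ in advance. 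The $L^\infty$ bound on $u_{i-1}$ then comes for free (it is the same estimate at the previous index), after which you subtract to get $F_i\leq C_4$; the lower bound on $F_i$ is a separate step via \cite[Lemma 3.3]{CC2}. So your sketch is correct, but the obstacle you flag dissolves once you note that Skoda integrability, not $L^\infty$ control, is the actual input.
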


\begin{proof}
    First, using Lemma \ref{lem:d1-comparable-L1}, the bound $d_1(0,u_i)\leq A$ implies that the $u_i$ has uniform $L^1$ bound, which in turn gives that (see e.g. \cite[Lemma 3.45]{DarvasSurvey})
    $$
    |\sup_X u_i|\leq C_1\text{ for all }i\in\NN,
    $$
    where $C_1=C_1(X,\omega,A)>0$. Moreover, for any $p>1$, Zeriahi's version of the Skoda--Tian type estimate \cite{Zeri01} (see \cite[Corollary 4.16]{DarvasSurvey} for a formulation that fits our context) implies that there exists $C_2=C_2(X,\omega,A,p)>0$ such that
    \begin{equation}
        \label{eq:skoda-estimate-for-u-i}
         \int_Xe^{-p u_i}\omega^n\leq C_2\text{ for all }i\in\NN.
    \end{equation}
    Then one can apply \cite[Corollary 3.2]{CC2} (with $F=F_i$, $f_*=u_{i-1}$, $\varphi=u_i$, $\beta_0=\omega$ and $R_0=\bar R-n$) to find a constant $C_3=C_3(X,\omega,A)>0$ such that
    $$
    F_i+u_{i-1}\leq C_3,\ |u_i|\leq C_3\text{ for all }i\in\NN.
    $$
    Since $u_0=0$, we conclude by induction that there exists some $C_4=C_4(X,\omega,A)>0$ such that
    $$
     F_i\leq C_4,\ |u_i|\leq C_4\text{ for all }i\in\NN.
    $$
    Then \cite[Lemma 3.3]{CC2} further implies that there exists some $B_1=B_1(X,\omega,A)>0$ such that
    $$
     |F_i|+|u_i|\leq B_1\text{ for all }i\in\NN.
    $$
\end{proof}

\begin{corollary}
    Assume that there is some constant $A>0$ such that
    $$
    Ent_\omega(u_i)+d_1(0,u_i)\leq A\text{ for all }i\in\NN.
    $$
    Then for any $q>1$ there exists some constant $B_2\geq 1$ depending only on $X,\omega, q$ and $A$ such that
    $$
    n+\Delta_\omega u_i\geq\frac{1}{B_2}\text{ for all }i\in\NN.
    $$
    and
    \begin{equation}
        \label{eq:Lp-bound-for-n+Delta-ui}
         \int_X(n+\Delta_\omega u_i)^q\omega^n\leq B_2\text{ for all }i\in\NN.
    \end{equation}
\end{corollary}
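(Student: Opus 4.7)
The plan is to prove the two assertions separately and then take $B_2$ to be the larger of the two resulting constants.

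For the pointwise lower bound, I would apply the arithmetic--geometric mean inequality to the eigenvalues of $\omega_i$ with respect to $\omega$. From \eqref{eq:def-F-i-by-vol-form} the product of these eigenvalues equals $e^{F_i}$, while their sum equals $n+\Delta_\omega u_i$, so
\begin{equation*}
n+\Delta_\omega u_i \;=\; \tr_\omega\omega_i \;\geq\; n\Big(\frac{\omega_i^n}{\omega^n}\Big)^{1/n} \;=\; n\,e^{F_i/n} \;\geq\; n\,e^{-B_1/n},
\end{equation*}
where the last inequality uses the uniform $C^0$ bound $|F_i|\leq B_1$ supplied by Proposition \ref{prop:C0}. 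This settles the first half with any $B_2\geq e^{B_1/n}/n$.

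For the $L^q$ estimate, the plan is to feed the iteration directly into the Chen--Cheng Laplacian estimate of \cite[\S 3]{CC2}. Written in the form
\begin{equation*}
\Delta_{\omega_i}(F_i+u_{i-1}) \;=\; \tr_{\omega_i}(\Ric(\omega)-\omega)+n-\bar R,
\end{equation*}
the potential $u_i$ sits in exactly the framework treated there: a Monge--Amp\`ere equation twisted by a bounded plurisubharmonic potential, namely $u_{i-1}$. The Chen--Cheng $L^q$ Laplacian estimate takes as input uniform $C^0$ bounds on $u_i$, on $F_i$, and on the twist $u_{i-1}$; all three are delivered by Proposition \ref{prop:C0}, uniformly in $i$. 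Its output is a constant depending only on $X$, $\omega$, $q$, and $A$ (through $B_1$) that bounds $\int_X(n+\Delta_\omega u_i)^q\,\omega^n$.

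The main point to verify---really the only step that is not a direct citation---is that the Chen--Cheng estimate is genuinely uniform across the iteration, that is, that its constants depend on the twist only through $\|u_{i-1}\|_{L^\infty}$ and not through any stronger norm. This is precisely the form in which \cite[\S 3]{CC2} is stated, so once it is unwound the remainder is bookkeeping: enlarge $B_2$ to accommodate both the lower bound and the $L^q$ bound and to satisfy $B_2\geq 1$.
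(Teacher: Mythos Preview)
Your proposal is correct and matches the paper's proof essentially line for line: the lower bound is exactly the AM--GM argument $n+\Delta_\omega u_i\geq n e^{F_i/n}\geq n e^{-B_1/n}$, and the $L^q$ bound is a direct application of \cite[Corollary 3.4]{CC2} with the $C^0$ input from Proposition~\ref{prop:C0}. The only cosmetic difference is that the paper also cites the Skoda-type estimate \eqref{eq:skoda-estimate-for-u-i} explicitly as an input to \cite[Corollary 3.4]{CC2}, whereas you subsume it under the $C^0$ bound on $u_{i-1}$; since the latter implies the former trivially, this is not a gap.
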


\begin{proof}
The first inequality follows from
$$
n+\Delta_\omega u_i\geq n e^{F/n}\geq ne^{-B_1/n}.
$$
    The second inequality follows from \eqref{eq:skoda-estimate-for-u-i}, Proposition \ref{prop:C0} and \cite[Corollary 3.4]{CC2}.
\end{proof}

\begin{proposition}
    Assume that there is some constant $A>0$ such that
    $$
    Ent_\omega(u_i)+d_1(0,u_i)\leq A\text{ for all }i\in\NN.
    $$
    Then there exists some constant $B_3$ depending only on $X,\omega$ and $A$ such that
    $$
    \max_X|\nabla_{\omega_i}(F_i+u_{i-1})|^2_{\omega_i}+
  \max_X (n+\Delta_\omega u_i)\leq B_3\text{ for all }i\in\NN.
    $$
\end{proposition}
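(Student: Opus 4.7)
The central observation is that upon setting $\tilde F_i := F_i + u_{i-1}$, the iteration data satisfies
\begin{equation*}
    \omega_i^n = e^{\tilde F_i - u_{i-1}}\omega^n,
    \qquad
    \Delta_{\omega_i}\tilde F_i = \tr_{\omega_i}\Ric(\omega) - \tr_{\omega_i}\omega + n - \bar R,
\end{equation*}
where the right-hand side of the Poisson equation depends only on $\omega_i$ and not on $\omega_{i-1}$. This is exactly the form of the coupled system for cscK metrics treated in \cite[\S 3]{CC2}, with $u_{i-1}$ playing the role of a known bounded background potential (bounded uniformly by $B_1$ via Proposition \ref{prop:C0}). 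The plan is therefore to invoke the Chen--Cheng machinery step by step, checking that all constants depend only on $X,\omega,A$.

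\emph{Step 1 (gradient estimate for $\tilde F_i$).} I will first bound $|\nabla_{\omega_i}\tilde F_i|^2_{\omega_i}$ by running an auxiliary maximum principle argument on a weighted version of $|\nabla\tilde F_i|^2_{\omega_i}$, such as $e^{-\alpha\tilde F_i}|\nabla\tilde F_i|^2_{\omega_i}$ with $\alpha$ depending on $n$ and $B_1$, or the specific test function used in \cite[\S 3]{CC2}. The required inputs are the $C^0$ bound on $\tilde F_i$ from Proposition \ref{prop:C0}, the uniform positive lower bound $n+\Delta_\omega u_i\geq B_2^{-1}$, and the $L^q$ upper bound \eqref{eq:Lp-bound-for-n+Delta-ui}; together with the arithmetic--geometric mean inequality, these also yield uniform $L^q$ control of $\tr_{\omega_i}\omega$ and $\tr_{\omega_i}\Ric(\omega)$. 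The output is a uniform constant $B_3=B_3(X,\omega,A)$ with $\max_X|\nabla_{\omega_i}\tilde F_i|^2_{\omega_i}\leq B_3$.

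\emph{Step 2 (Laplacian upper bound).} Next, I will run the Aubin--Yau type maximum principle on an auxiliary function of the form $\log(n+\Delta_\omega u_i) - \kappa u_i$, for $\kappa$ sufficiently large depending on a lower bound of the bisectional curvature of $\omega$. At the maximum point, the standard Yau computation produces terms involving $\Delta_{\omega_i}\tilde F_i$ and $|\nabla\tilde F_i|^2_{\omega_i}$, which are controlled by the Poisson equation and by Step 1 respectively, while the $-\kappa u_i$ term, thanks to $|u_i|\leq B_1$, provides the required positive coercive contribution from $\kappa\,\tr_{\omega_i}\omega$ to absorb the bad curvature terms. This yields $\max_X(n+\Delta_\omega u_i)\leq B_4$.

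The main obstacle one might anticipate is that the iteration couples step $i$ to step $i-1$ through $\omega_{i-1}$, which could in principle cause derivative estimates to deteriorate as $i$ grows. However, the key structural point is that regrouping $F_i$ with $u_{i-1}$ eliminates $\omega_{i-1}$ entirely from the right-hand side of the Poisson equation; the potential $u_{i-1}$ then appears only as a $C^0$-bounded function in the volume form identity. Since the Chen--Cheng estimates in \cite[\S 3]{CC2} are insensitive to such bounded additive perturbations of the volume density, no higher-order control of $\omega_{i-1}$ is needed, and all the constants produced in Steps 1 and 2 remain uniform in $i$, depending only on $X,\omega,A$.
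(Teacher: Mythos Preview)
Your regrouping $\tilde F_i=F_i+u_{i-1}$ (the paper writes $w_i$) is exactly the right move, and the observation that $\Delta_{\omega_i}\tilde F_i$ no longer sees $\omega_{i-1}$ is correct. But the rest of your plan has a genuine gap, and the paper's proof differs from yours in two essential respects.

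First, your claim that ``$u_{i-1}$ then appears only as a $C^0$-bounded function in the volume form identity'' and that the Chen--Cheng estimates are ``insensitive to such bounded additive perturbations of the volume density'' is not right. In the second-order computation one needs $\Delta_\omega$ of the \emph{log volume density}, i.e.\ $\Delta_\omega F_i$, and since $F_i=\tilde F_i-u_{i-1}$ this reintroduces $\Delta_\omega u_{i-1}$. Concretely, the paper computes (following \cite[(3.25)]{CC2})
\[
\Delta_{\omega_i}(n+\Delta_\omega u_i)\ \geq\ -C\,\tr_{\omega_i}\omega\,(n+\Delta_\omega u_i)+\Delta_\omega w_i-(n+\Delta_\omega u_{i-1})-C,
\]
so the term $(n+\Delta_\omega u_{i-1})$ appears explicitly. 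This quantity is only controlled in $L^q$ by the preceding corollary, not pointwise; hence your Step~2 maximum-principle argument on $\log(n+\Delta_\omega u_i)-\kappa u_i$ cannot close. (An induction on $i$ would make the bound depend on $i$ and deteriorate.) This is precisely why the paper uses Nash--Moser iteration rather than a maximum principle: the $L^q$ bound on $(n+\Delta_\omega u_{i-1})$ enters the coefficient $G_i$ and is absorbed by the Moser iteration.

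Second, your separation into ``Step 1 then Step 2'' does not match how the argument actually runs. The paper (following \cite[\S3]{CC2}) works with the single quantity
\[
U_i:=e^{\frac{1}{2}w_i}|\nabla_{\omega_i}w_i|^2_{\omega_i}+(n+\Delta_\omega u_i)+1
\]
and derives an integral inequality for $U_i$ suitable for Nash--Moser. A key point---flagged explicitly in the paper---is that one must \emph{retain} the Hessian term $\sum_{k,l}|(w_i)_{k\bar l}|^2/\bigl((1+(u_i)_{k\bar k})(1+(u_i)_{l\bar l})\bigr)$ coming from the Bochner computation for the gradient (this term is discarded in \cite[(3.49)]{CC2}) in order to absorb the bad term $\sum_k|(w_i)_{k\bar k}|^2/(1+(u_i)_{k\bar k})^2$ that arises when bounding $|\Delta_\omega w_i|$ by Cauchy--Schwarz in the Laplacian estimate. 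In other words, the gradient and Laplacian estimates are coupled at the level of the differential inequality and cannot be decoupled the way you propose.

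In summary: the structural observation is right, but replace the two separate maximum-principle steps by the combined Nash--Moser argument on $U_i$, and track the extra $L^q$-bounded source term $(n+\Delta_\omega u_{i-1})$ through the iteration.
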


\begin{proof}
    The proof follows closely the one of \cite[Theorem 3.2]{CC2}. The basic idea is to estimate
    $$
    \Delta_{\omega_i}(e^{\frac{1}{2}(F_i+u_{i-1})}|\nabla_{\omega_i}(F_i+u_{i-1}))|^2_{\omega_i}+(n+\Delta_{\omega}u_i))
    $$
    and then apply Nash--Moser iteration. Compared to \cite{CC2}, the only difference is that we have the additional term $(n+\Delta_\omega u_i)$.

    To simplify the notation, we put $\Delta:=\Delta_\omega$, and use the subscript $i$ to denote the operators associated with the metric $\omega_i$, e.g., $\tr_i:=\tr_{\omega_i}$, $\Delta_i:=\Delta_{\omega_i}$. Also, put 
    $$
    w_i:=F_i+u_{i-1}.
    $$
    So one has
    $$
    \Delta_i w_i=\tr_i(\Ric(\omega)-\omega)+n-\bar R.
    $$
    
In what follows, the constants $C>0$ will change from line to line, which are all uniform (may depend on $X,\omega,A$, but are independent of $i$).

    Now we compute $\Delta_i(n+\Delta u_i)$. As in \cite[(3.25)]{CC2}, we have
    \begin{equation*}
        \begin{aligned}
             \Delta_i(n+\Delta u_i)&\geq -C\tr_i\omega(n+\Delta u_i)+\Delta F_i-R(\omega)\\
             &\geq -C\tr_i\omega(n+\Delta u_i)+\Delta w_i-(n+\Delta u_{i-1})-C.\\
        \end{aligned}
    \end{equation*}
Using \eqref{eq:def-F-i-by-vol-form} and Proposition \ref{prop:C0} one can estimate
$$
\tr_i\omega\leq n e^{-F_i}(n+\Delta u_i)^{n-1}\leq C(n+\Delta u_i)^{n-1}.
$$
Also, one can estimate $\Delta w_i$:
$$
|\Delta w_i|\leq \frac{1}{2C}\sum_k\frac{|(w_i)_{k\bar k}|^2}{(1+(u_i)_{k\bar k})^2}+\frac{C}{2}\sum_k(1+(u_i)_{k\bar k})^2\leq \frac{1}{2C}\sum_k\frac{|(w_i)_{k\bar k}|^2}{(1+(u_i)_{k\bar k})^2}+\frac{C}{2}(n+\Delta u_i)^2.
$$
So we get
\begin{equation}
    \label{eq:Delta-i-n+Delta-u-i}
    \Delta_i(n+\Delta u_i)\geq-C(n+\Delta u_i)^n-\frac{1}{2C}\sum_k\frac{|(w_i)_{k\bar k}|^2}{(1+(u_i)_{k\bar k})^2}-\frac{C}{2}(n+\Delta u_i)^2-(n+\Delta u_{i-1})-C.
\end{equation}

Next, we compute $\Delta_i(e^{\frac{1}{2}w_i}|\nabla_i w_i|^2_i)$. As in \cite[(3.43)-(3.49)]{CC2}, we have
\begin{equation*}
    \begin{aligned}
        \Delta_i(e^{\frac{1}{2}w_i}|\nabla_i w_i|^2_i)&\geq 2 e^{\frac{1}{2}w_i}\nabla_i w_i\cdot_i\nabla_i\Delta_i w_i+e^{\frac{1}{2}w_i}\sum_{k,l}\frac{|(w_i)_{k\bar l}|^2}{(1+(u_i)_{k\bar k})(1+(u_i)_{l\bar l})}\\
        &-C e^{\frac{1}{2}w_i}|\nabla_i w_i|^2\big(2e^{-F_i}(n+\Delta u_i)^{2n-1}+e^{-F_i}(n+\Delta u_i)^{n-1}+1\big).\\
        &\geq 2 e^{\frac{1}{2}w_i}\nabla_i w_i\cdot_i\nabla_i\Delta_i w_i+\frac{1}{C}\sum_{k,l}\frac{|(w_i)_{k\bar l}|^2}{(1+(u_i)_{k\bar k})(1+(u_i)_{l\bar l})}\\
        &-C e^{\frac{1}{2}w_i}|\nabla_i w_i|^2\big((n+\Delta u_i)^{2n-1}+(n+\Delta u_i)^{n-1}+1\big).
    \end{aligned}
\end{equation*}
The first inequality is just \cite[(3.49)]{CC2}, with one additional term that corresponds to the term $e^{\frac{1}{2}w}g_\varphi^{i\bar j}g_\varphi^{\alpha\bar\beta}w_{,\alpha\bar j}w_{,\bar \beta i}$ in \cite[(3.43)]{CC2}. This term is dropped in \cite[(3.49)]{CC2} since it plays no role in \emph{loc. cit.} But we need to keep it in order to dominate the bad term $|(w_i)_{k\bar k}|^2$ in \eqref{eq:Delta-i-n+Delta-u-i}. In the second inequality, we used Proposition \ref{prop:C0}. Here we also used the notation `$._i$' to denote the inner product taken with respect to the metric $\omega_i$.

Putting these estimates together, we then arrive at
\begin{equation*}
    \begin{aligned}
        \Delta_i(e^{\frac{1}{2}w_i}|\nabla_i w_i|^2_i+(n+&\Delta u_i))\geq 2 e^{\frac{1}{2}w_i}\nabla_i w_i\cdot_i\nabla_i\Delta_i w_i-C\big((n+\Delta u_i)^n+(n+\Delta u_i)^2+1\big)\\
        &-C e^{\frac{1}{2}w_i}|\nabla_i w_i|^2\big((n+\Delta u_i)^{2n-1}+(n+\Delta u_i)^{n-1}+1\big)-(n+\Delta u_{i-1}).\\
    \end{aligned}
\end{equation*}
Set
$$
U_i:=e^{\frac{1}{2}w_i}|\nabla_i w_i|^2_i+(n+\Delta u_i)+1.
$$
Using $n+\Delta u_i\geq B_2^{-1}$ and  $U_i\geq 1$ we can further simplify to get
\begin{equation*}
    \begin{aligned}
        \Delta_i U_i&\geq 2 e^{\frac{1}{2}w_i}\nabla_i w_i\cdot_i\nabla_i\Delta_i w_i-CU_i\big((n+\Delta u_i)^{2n-1}+1\big)-(n+\Delta u_{i-1})\\
        &\geq 2 e^{\frac{1}{2}w_i}\nabla_i w_i\cdot_i\nabla_i\Delta_i w_i-CU_i\big((n+\Delta u_i)^{2n-1}+(n+\Delta u_{i-1})+1\big).
    \end{aligned}
\end{equation*}
Put
$$
\tilde G_i:=C\big((n+\Delta u_i)^{2n-1}+(n+\Delta u_{i-1})+1\big),
$$
then we have the following key estimate:
$$
\Delta_i U_i\geq 2 e^{\frac{1}{2}w_i}\nabla_i w_i\cdot_i\nabla_i\Delta_i w_i-U_i\tilde G_i.
$$

Then for any $p>1$, we obtain
\begin{equation*}
    \begin{aligned}
        \int_X(p-1)&U_i^{p-2}|\nabla_i U_i|_i^2\omega_i^n=\int_XU_i^{p-1}(-\Delta_i U_i)\omega^n_i\\
        &\leq\int_XU^p_i\tilde G_i\omega^n_i-\int_X2U^{p-1}_i e^{\frac{1}{2}w_i}\nabla_i w_i\cdot_i\nabla_i\Delta_i w_i\omega^n_i.\\
    \end{aligned}
\end{equation*}
One can deal with the bad term $e^{\frac{1}{2}w_i}\nabla_i w_i\cdot_i\nabla_i\Delta_i w_i$ using integration by parts:
\begin{equation*}
    \begin{aligned}
       -\int_X2U^{p-1}_i&e^{\frac{1}{2}w_i}\nabla_i w_i\cdot_i\nabla_i\Delta_i w_i\omega^n_i=\int_X2U_i^{p-1}e^{\frac{1}{2}w_i}(\Delta_i w_i)^2\omega^n_i\\
       &+\int_XU^{p-1}_ie^{\frac{1}{2}w_i}|\nabla_i w_i|^2_i\Delta_i w_i\omega^n_i+\int_X2(p-1)U_i^{p-2}e^{\frac{1}{2}w_i}\nabla_i U_i\cdot_i\nabla_i w_i\Delta_i w_i\omega^n_i.\\
    \end{aligned}
\end{equation*}
Using the simple fact that $e^{\frac{1}{2}w_i}|\nabla_i w_i|^2_i\leq U_i$, we can estimate
$$
\int_XU^{p-1}_ie^{\frac{1}{2}w_i}|\nabla_i w_i|^2_i\Delta_i w_i\omega^n_i\leq \int_XU_i^p|\Delta_i w_i|\omega^n_i\leq \int_XU_i^p((\Delta_i w_i)^2+1)\omega^n_i,
$$
and by the Cauchy–Schwarz inequality together with the inequality of arithmetic and geometric means, we derive that
\begin{equation*}
    \begin{split}
        \int_X2U_i^{p-2}e^{\frac{1}{2}w_i}\nabla_i U_i\cdot_i\nabla_i w_i\Delta_i w_i\omega^n_i&\leq \int_X\frac{1}{2}U^{p-2}_i|\nabla_i U_i|^2_i\omega^n_i+\int_X2U_i^{p-2}e^{w_i}|\nabla_iw_i|_i^2(\Delta_i w_i)^2\omega^n_i\\
        &\leq \int_X\frac{1}{2}U^{p-2}_i|\nabla_i U_i|^2_i\omega^n_i+\int_X2U_i^{p-1}e^{\frac{1}{2}w_i}(\Delta_i w_i)^2\omega^n_i.\\
    \end{split}
\end{equation*}
In the second inequality we used $e^{\frac{1}{2}w_i}|\nabla_i w_i|^2_i\leq U_i$ again.

Putting these together and using $U_i\geq 1$, one can derive that (as in \cite[(3.54)]{CC2})
\begin{equation}
    \label{eq:int-U-p<int-U-p-G}
    \int_X\frac{p-1}{2}U_i^{p-2}|\nabla_i U_i|_i^2\omega_i^n\leq \int_XpU_i^p G_i e^{F_i}\omega^n,
\end{equation}
where 
$$
G_i:=\tilde G_i+(\Delta_i w_i)^2+2e^{\frac{1}{2}w_i}(\Delta_i w_i)^2+1.
$$

The rest of the proof uses Nash--Moser iteration, which goes through in exactly the same way as in \cite[p.960-962]{CC2}. Compared to \cite[(3.54)]{CC2}, in our $U_i$, there is an additional term $(n+\Delta u_{i})$, and in our $G_i$ there is an additional term $(n+\Delta u_{i-1})$. These additional terms will cause no trouble, thanks to \eqref{eq:Lp-bound-for-n+Delta-ui}.

For the reader's convenience, let us sketch this iteration process.

First, using H\"older's inequality as in \cite[(3.55)-(3.58)]{CC2}, we deduce from \eqref{eq:int-U-p<int-U-p-G} that
$$
||\nabla(U_i^{\frac{p}{2}})||^2_{L^{2-\varepsilon}(\omega^n)}\leq \frac{K_\varepsilon C p^3}{2(p-1)}\int_XU_i^p G_i e^{F_i}\omega^n,
$$
where (as in \cite[(3.57)]{CC2})
$$
K_\varepsilon=\left(\int_X(n+\Delta u_i)^{\frac{2}{\varepsilon}-1}\omega^n\right)^{\frac{\varepsilon}{2-\varepsilon}}
$$
and
$\varepsilon>0$ is some constant to be determined.

Then, applying the Sobolev inequality with exponent $(2-\varepsilon)$, we obtain (cf. \cite[(3.59)]{CC2})
$$
||U_i^{\frac{p}{2}}||^2_{L^{\frac{2n(2-\varepsilon)}{2n-2+\varepsilon}}}\leq D_\varepsilon\left(\frac{K_\varepsilon C p^3}{2(p-1)}\left(\int_XU_i^{\frac{2p}{2-\varepsilon}}\omega^n\right)^{\frac{2-\varepsilon}{2}}\times\left(\int_XG_i^{\frac{2}{\varepsilon}}e^{\frac{2F_i}{\varepsilon}}\omega^n\right)^{\frac{\varepsilon}{2}}+||U_i^{\frac{p}{2}}||^2_{L^{\frac{4}{2-\varepsilon}}}\right)
$$
where $D_\varepsilon>0$ depends on the Sobolev constant and $\int_X\omega^n$. Denote
$$
L_\varepsilon=\left(\int_XG_i^{\frac{2}{\varepsilon}}e^{\frac{2F_i}{\varepsilon}}\omega^n\right)^{\frac{\varepsilon}{2}}
$$
and choose $\varepsilon=\frac{1}{2n}$ so that
$$
\frac{2n(2-\varepsilon)}{2n-2+\varepsilon}>\frac{4}{2-\varepsilon}.
$$
Then we have that (cf. \cite[(3.62)]{CC2})
\begin{equation}
    \label{eq:iterate}
    ||U_i^{\frac{p}{2}}||^2_{L^{\frac{2n(2-\varepsilon)}{2n-2+\varepsilon}}}\leq \frac{Cp^3}{2(p-1)}(K_\varepsilon L_\varepsilon+1)||U_i^{\frac{p}{2}}||^2_{L^{\frac{4}{2-\varepsilon}}}.
\end{equation}

To carry out the iteration, we need to bound $K_\varepsilon$ and $L_\varepsilon$ (with $\varepsilon=\frac{1}{2n}$). Namely, we need to bound $\int_X(n+\Delta u_i)^{4n-1}\omega^n$ and $\int_X G_i^{4n}e^{4n F_i}\omega^n$. The former is bounded, thanks to \eqref{eq:Lp-bound-for-n+Delta-ui}. To bound $\int_X G_i^{4n}e^{4n F_i}\omega^n$, we first estimate $G_i$ as in \cite[(3.63)]{CC2}, which shows that
$$
G_i\leq C ((n+\Delta u_i)^{2n-1}+(n+\Delta u_{i-1})).
$$
So H\"older's inequality implies that
$$
L_\varepsilon\leq C\left(\int_X((n+\Delta u_i)^{2n-1}+(n+\Delta u_{i-1}))^{8n}\omega^n\right)^{\frac{1}{2}}\times\left(\int_Xe^{8nF_i}\omega^n\right)^{\frac{1}{2}},
$$
which is bounded due to \eqref{eq:Lp-bound-for-n+Delta-ui} and Proposition \ref{prop:C0}.

With these preparations, we can now iterate \eqref{eq:iterate} to get
$$
||U_i||_{L^\infty}\leq C ||U_i||_{L^1(\omega^n)}.
$$
It remains to show that $||U_i||_{L^1(\omega^n)}$ is bounded. Recall that
$$
U_i:=e^{\frac{1}{2}w_i}|\nabla_i w_i|^2_i+(n+\Delta u_i)+1.
$$
The first term has $L^1$-bound, as shown in the end of the proof of \cite[Theorem 3.2]{CC2}. The second term also has $L^1$-bound, thanks to \eqref{eq:Lp-bound-for-n+Delta-ui}. This completes the proof. 
\end{proof}

\begin{corollary}
    Assume that there is some constant $A>0$ such that
    $$
    Ent_\omega(u_i)+d_1(0,u_i)\leq A\text{ for all }i\in\NN.
    $$
    Then there exists some constant $B_4>1$ depending only on $X,\omega$ and $A$ such that
    $$
  B_4^{-1}\omega\leq\omega_i\leq B_4\omega\text{ for all }i\in\NN.
    $$
\end{corollary}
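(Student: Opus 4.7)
The plan is to combine the uniform $L^\infty$ bound on the trace $n+\Delta_\omega u_i$ coming from the previous proposition with the uniform $C^0$ bound on the Monge--Amp\`ere density $F_i$ coming from Proposition \ref{prop:C0}; together these pinch the eigenvalues of $\omega_i$ with respect to $\omega$ from above and from below. No further PDE input is needed here --- the statement is a purely pointwise linear-algebra consequence of the estimates already established.

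First, from the previous proposition we have $n+\Delta_\omega u_i\leq B_4$, which is exactly $\tr_\omega \omega_i\leq B_4$. At each point $x\in X$, let $\lambda_1(x)\leq\cdots\leq\lambda_n(x)$ denote the eigenvalues of $\omega_i(x)$ measured against $\omega(x)$ (these are positive since $\omega_i$ is K\"ahler). The trace bound gives $\lambda_n(x)\leq B_4$, and therefore $\omega_i\leq B_4\,\omega$ as $(1,1)$-forms, establishing the upper bound.

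Second, $(\omega+\ddc u_i)^n=e^{F_i}\omega^n$ reads $\lambda_1(x)\cdots\lambda_n(x)=e^{F_i(x)}$. Proposition \ref{prop:C0} gives $|F_i|\leq B_1$, so $\lambda_1(x)\cdots\lambda_n(x)\geq e^{-B_1}$. Combining with $\lambda_j(x)\leq B_4$ for every $j$, we obtain
\[
\lambda_1(x)\geq\frac{e^{-B_1}}{\lambda_2(x)\cdots\lambda_n(x)}\geq\frac{e^{-B_1}}{B_4^{\,n-1}},
\]
uniformly in $x$ and $i$. Hence $\omega_i\geq (e^{-B_1}B_4^{-(n-1)})\,\omega$, which is the lower bound. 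Setting
\[
B_5:=\max\bigl\{B_4,\; B_4^{\,n-1}e^{B_1}\bigr\}
\]
yields the desired two-sided estimate, with $B_5$ depending only on $X,\omega,A$.

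There is no serious obstacle: the only thing to check is that the constants $B_1$ and $B_4$ from the earlier results depend only on $(X,\omega,A)$, which is exactly their stated form, and that the eigenvalue argument is valid pointwise, which is immediate.
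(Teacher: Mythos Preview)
Your proof is correct and is exactly the paper's argument spelled out in detail: the paper simply says the corollary ``follows immediately from $\omega_i^n\geq e^{-B_1}\omega^n$ and $\tr_\omega\omega_i\leq B_4$,'' and you have supplied the standard eigenvalue computation that makes this inference explicit.
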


\begin{proof}
    This follows immediately from $\omega_i^n\geq e^{-B_1}\omega^n$ and $\tr_\omega\omega_i\leq B_3$.
\end{proof}

By classical elliptic estimates and bootstrapping, we then have the following uniform estimates.

\begin{corollary}
\label{cor:C-k-alpha-bound-for-ui}
    Assume that there is some constant $A>0$ such that
    $$
    Ent_\omega(u_i)+d_1(0,u_i)\leq A\text{ for all }i\in\NN.
    $$
    Then for any $\alpha\in(0,1)$ and $k\geq1$, there exists some constant $B_{k,\alpha}>1$ depending only on $X,\omega,\alpha,k$ and $A$ such that
    $$
  ||u_i||_{C^{k,\alpha}}\leq B_{k,\alpha}\text{ for all }i\in\NN.
    $$
\end{corollary}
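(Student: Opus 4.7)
The plan is to perform a standard bootstrap that combines $C^{k,\alpha}$-regularity for the complex Monge--Amp\`ere equation \eqref{eq:def-F-i-by-vol-form} with interior Schauder estimates for the linear elliptic equation \eqref{eq:laplace-of-Fi} satisfied by $F_i$.

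First I would establish a uniform $C^{2,\alpha}$ bound as a base case. The $L^\infty$ bound $n+\Delta_\omega u_i\leq B_4$ combined with $L^p$ Calder\'on--Zygmund theory yields a uniform $W^{2,p}$ bound on $u_i$ for every $p<\infty$, hence a uniform $C^{1,\alpha}$ bound on each $u_j$. Together with the gradient bound $|\nabla_{\omega_i}(F_i+u_{i-1})|_{\omega_i}\leq\sqrt{B_4}$ from the previous proposition, this produces a uniform Lipschitz, hence $C^{0,\alpha}$, bound on $F_i$. Since \eqref{eq:def-F-i-by-vol-form} is uniformly elliptic thanks to $B_5^{-1}\omega\leq\omega_i\leq B_5\omega$, the Evans--Krylov regularity theory for the complex Monge--Amp\`ere equation (applied with $C^{0,\alpha}$ right-hand side) delivers a uniform $C^{2,\alpha}$ bound on $u_i$.

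The remaining regularity is then obtained by induction on $k$. Assume uniform $C^{k,\alpha}$ bounds on every $u_j$ for some $k\geq2$. Then the coefficients $g_i^{j\bar k}$ of $\Delta_{\omega_i}$ are uniformly $C^{k-2,\alpha}$, and the right-hand side $\tr_{\omega_i}(\Ric(\omega)-\omega_{i-1})+n-\bar R$ of \eqref{eq:laplace-of-Fi} is uniformly $C^{k-2,\alpha}$ because $u_{i-1}$ satisfies the same inductive hypothesis. Schauder's estimate therefore yields a uniform $C^{k,\alpha}$ bound on $F_i$. Plugging this back into \eqref{eq:def-F-i-by-vol-form} and bootstrapping the Monge--Amp\`ere equation in the standard way (differentiate, invoke Schauder on the linearized equation, iterate) improves the bound on $u_i$ to $C^{k+2,\alpha}$. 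Iterating this step raises the regularity indefinitely and gives the conclusion for arbitrary $k\geq1$.

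The only point requiring care is the coupling of $u_i$ with $u_{i-1}$ through \eqref{eq:laplace-of-Fi}: the right-hand side of the equation for $F_i$ depends on the previous iterate. This is harmless because at each stage of the induction the required bound on $u_{i-1}$ is already part of the hypothesis at the same regularity level, so the bootstrap produces truly \emph{uniform} (independent of $i$) estimates. Beyond this mild bookkeeping, I do not anticipate any substantial obstacle: all the heavy nonlinear analysis inherited from \cite[\S3]{CC2} has already been carried out in the earlier parts of this section, and the present corollary is a routine consequence.
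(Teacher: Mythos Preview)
Your proposal is correct and follows essentially the same bootstrap as the paper: start from the uniform ellipticity $B_5^{-1}\omega\leq\omega_i\leq B_5\omega$, obtain an initial H\"older bound (the paper cites \cite[Proposition 4.2]{CC1} to get $u_i\in C^{3,\alpha}$, $F_i\in C^{1,\alpha}$; you reach $u_i\in C^{2,\alpha}$ via Calder\'on--Zygmund and Evans--Krylov), then alternate Schauder estimates on \eqref{eq:laplace-of-Fi} and differentiation of \eqref{eq:def-F-i-by-vol-form}, using at each stage that $u_{i-1}$ already enjoys the same regularity. The only cosmetic difference is the base case and the step size of the induction; the logic is identical.
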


\begin{proof}
    First, 
    the estimate $B_4^{-1}\omega\leq\omega_i\leq B_4\omega$ implies that \eqref{eq:laplace-of-Fi} is uniformly elliptic with bounded right hand side. Then arguing
    as in the proof of \cite[Proposition 4.2]{CC1}, one has $u_i\in C^{3,\alpha}$ and $F_i\in C^{1,\alpha}$ for all $i\in\NN$. This implies that the equation \eqref{eq:laplace-of-Fi} has $C^{1,\alpha}$-coefficients and right hand (since we already know that $u_{i-1}$ has $C^{3,\alpha}$ bound). This gives $C^{3,\alpha}$ bound for $F_i$. Differentiating the equation \eqref{eq:def-F-i-by-vol-form} twice one then gets a linear elliptic equation for the second derivatives of $u_i$ with $C^\alpha$ coefficients and right hand side. So we get the $C^{4,\alpha}$-bound for $u_i$. Continuing in this way we get all the $C^{k,\alpha}$ bounds for $u_i$.
\end{proof}

\begin{remark}
    In the above discussion we have set $\tau=1$ to simplify the exposition. In general, the equation we are dealing with is
    \begin{equation*}
        \begin{cases}
        (\omega+\ddc u_i)^n=e^{F_i}\omega^n,\\
            \Delta_{\omega_i}(F_i+u_{i-1}/\tau)=\tr_{\omega_i}(\Ric(\omega)-\omega/\tau)+n/\tau-\bar R.\\
        \end{cases}
    \end{equation*}
    In this case the estimates we get will depend on $\tau$ as well, and unfortunately they blow-up as $\tau\searrow 0$. Therefore, to show that the iteration converges to the flow \eqref{eqn:hKRF} as $\tau\searrow 0$, more effective estimates are needed.

    On the other hand, if we are allowed to take $\tau\gg0$ (for instance when the K-energy is bounded from below), then the boundedness assumption on $d_1(0,u_i)$ can be removed, since it is merely used to get the estimate \eqref{eq:skoda-estimate-for-u-i}:
    $$
    \int_Xe^{-p u_i/\tau}\omega^n\leq C,
    $$
    which now holds for free when $\tau\gg0$ by using Tian's $\alpha$-invariant \cite{Tian87} (here we need to normalize $u_i$ such that $\sup_X u_i=0$); see \cite[Lemma 4.20]{CC2} for a similar situation.
\end{remark}

\section{Smooth convergence of the Ricci iteration}
\label{sec:smooth-conv}

Assume that $(X,\omega)$ admits a cscK metric $\omega^*$ in $\{\omega\}$. Then by \cite[Theorem 1.5]{BDL20} we know that the K-energy is proper modulo $G:=Aut_0(X)$. Hence one can choose $\tau_0=\infty$ in Theorem \ref{thm:ite-exist}. Then for any $\tau>0$, we wish to show that the iteration sequence $\{\omega_i\}_{i\in\NN
}$ defined by \eqref{eqn:def-har-Ric-ite} converges in a suitable sense to a cscK metric. Up to scaling the K\"ahler class, we will assume without loss of generality that $\tau=1$. To make further simplification, we will first deal with the case where the cscK metric is unique, in which case the K-energy is proper (by \cite{DR17,BerBern17,BDL20}), i.e., $\gamma(X,\{\omega\})>0$ (recall \eqref{eq:def-gamma}). 

Therefore, we have that
\begin{itemize}
    \item There exists $\gamma>0$ and $C_0>0$ such that
    $$
    K_\omega(\varphi)\geq\gamma(I_\omega-J_\omega)(\varphi)-C_0\text{ for any }\varphi\in\cH_\omega.
    $$
    \item There exists a sequence $\{\omega_i\}_{i\in\NN}$ satisfying
    $$
    \omega_{i+1}-\omega_i=-\Ric(\omega_{i+1})+\HRic(\omega_{i+1}),\ \omega_0=\omega.
    $$
    Equivalently, one has
    \begin{equation}
        \label{eq:R-omega-i=R-n+tr}
        R(\omega_{i+1})=\bar R-n+\tr_{\omega_{i+1}}\omega_i,\ \omega_0=\omega.
    \end{equation}
    \item Write $\omega^*=\omega+\ddc{u^*}$ and $\omega_i=\omega+\ddc u_i$ for $u^*,u_i\in\cH_0$, where
    $$
    \cH_0:=\{\varphi\in\cH_\omega|E_\omega(\varphi)=0\}.
    $$
\end{itemize}

We wish to show the following.

\begin{theorem}
\label{thm:ui-d1-converge-to-cscK}
Assume that there exists a unique cscK potential $u^*\in\cH_0$, then the sequence $\{u_i\}_{i\in\NN}$ converges smoothly to $u^*$.    
\end{theorem}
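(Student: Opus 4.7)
The plan is to combine the K-energy monotonicity (Theorem \ref{thm:ite-decrease-K-energy}) and the properness hypothesis $\gamma(X,\{\omega\})>0$ with the a priori estimates of Section \ref{sec:estimates} to show that $\{u_i\}$ is $C^{k,\alpha}$-precompact; then use the variational content of one iteration step to show that any smooth subsequential limit must be cscK; and finally conclude by the assumed uniqueness.

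\textbf{Uniform bounds.} By Theorem \ref{thm:ite-decrease-K-energy} the sequence $K_\omega(u_i)$ is decreasing, and it is bounded below by $K_\omega(u^*)$. Properness (via $\gamma(X,\{\omega\})>0$) then gives a uniform bound on $(I_\omega-J_\omega)(u_i)$, hence on $J_\omega(u_i)$ by Lemma \ref{lem:J<I-J<J} and on $d_1(0,u_i)$ by Lemma \ref{lem:J-d-1-compare}. Writing $K_\omega(u_i)=Ent_\omega(u_i)+\cJ^{-\Ric(\omega)}_\omega(u_i)$ and using the $d_1$-continuity of $\cJ^{-\Ric(\omega)}_\omega$ (Lemma \ref{lem:funtionals-to-E1}) yields a uniform bound on $Ent_\omega(u_i)$. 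Corollary \ref{cor:C-k-alpha-bound-for-ui} then supplies a uniform $C^{k,\alpha}$ bound on each $u_i$, so any subsequence admits a further subsequence converging smoothly (and in $\cH_0$) to some $u_\infty\in\cH_0$.

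\textbf{Identifying the limit.} Since $K_\omega(u_i)$ is monotone and bounded below, $K_\omega(u_{i+1})-K_\omega(u_i)\to 0$. Combining with the sharpened monotonicity established in the proof of Theorem \ref{thm:ite-decrease-K-energy},
$$K_\omega(u_{i+1})-K_\omega(u_i)\le -(I_{\omega_i}-J_{\omega_i})(\omega_{i+1})\le 0,$$
we obtain $(I_{\omega_i}-J_{\omega_i})(\omega_{i+1})\to 0$. Extract a subsequence $i_j$ along which both $u_{i_j}\to v$ and $u_{i_j+1}\to v'$ in $C^\infty$, using the bounds of the previous step. Continuity of $I$ and $J$ under smooth convergence yields $(I_{\omega_v}-J_{\omega_v})(\omega_{v'})=0$, so by the equality case of Lemma \ref{lem:J<I-J<J} we have $v=v'$. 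Passing to the limit in equation \eqref{eq:R-omega-i=R-n+tr} for $i+1=i_j+1$ then gives
$$R(\omega_{v'})=\bar R-n+\tr_{\omega_{v'}}\omega_v=\bar R-n+\tr_{\omega_{v'}}\omega_{v'}=\bar R,$$
so $v'$ is cscK. Uniqueness of $u^*$ forces $v'=u^*$, and since every subsequential smooth limit of $\{u_i\}$ equals $u^*$, the entire sequence $u_i$ converges to $u^*$ in $C^\infty$.

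\textbf{Main obstacle.} The delicate step is passing from $(I_{\omega_i}-J_{\omega_i})(\omega_{i+1})\to 0$, which is a statement coupling two successive iterates, to the identification $v=v'$ of their $C^\infty$-limits; this is what makes essential use of the Section \ref{sec:estimates} estimates (so that $I_{\omega_i}-J_{\omega_i}$ becomes a jointly continuous function of the pair $(\omega_i,\omega_{i+1})$ in the smooth topology). Everything else is a fairly standard monotonicity-plus-compactness scheme once this coupling is resolved.
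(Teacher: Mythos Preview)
Your proof is correct, and it is a genuinely different—and here more direct—route than the paper's. The paper first works in the $d_1$-framework: it takes a $d_1$-convergent subsequence $u_{i_k}\xrightarrow{d_1}u_\infty$, uses the minimizing property of each $u_{i_k}$ for $K_\omega+\cJ_\omega^{\omega_{i_k-1}}$ together with Lemma~\ref{lem:cJ-cJ=J} and Corollary~\ref{cor:u-ik-1-u-ik-same-d1-lim} to deduce that $u_\infty$ minimizes $u\mapsto K_\omega(u)+J(u,u_\infty)$ over $\cE^1_\omega$, invokes the a~priori estimates only to upgrade $u_\infty$ to a smooth potential, and then reads off the cscK equation from the first variation formula~\eqref{eq:var-formula}. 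By contrast, you use the estimates of Section~\ref{sec:estimates} at the outset to obtain $C^{k,\alpha}$-precompactness, and then identify the limit by the elementary device of passing to the smooth limit in the iteration equation~\eqref{eq:R-omega-i=R-n+tr} after noting (via the sharpened monotonicity) that $(I-J)(u_i,u_{i+1})\to 0$ forces any joint subsequential limit $(v,v')$ to satisfy $v=v'$.

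What each approach buys: your argument is shorter and avoids the $\cE^1_\omega$-machinery and the minimizer step entirely; it is the natural proof once one already has full smooth compactness. The paper's variational argument, on the other hand, is the template that survives in the harder setting of Theorem~\ref{thm:smooth-convergence-modulo-actions}, where one must pull back by automorphisms $g_i$ and compare $g_i.u_i$ with $g_i.u_{i-1}$; there the $d_1$-invariance of $I,J$ under $G$ and the variational inequality are what make the argument go through cleanly. One small wording issue: when you conclude ``every subsequential smooth limit of $\{u_i\}$ equals $u^*$'', you implicitly use that for \emph{any} convergent subsequence $u_{n_k}\to w$ one can pass to a further subsequence along which $u_{n_k+1}$ also converges, and then repeat your argument; this is routine, but worth making explicit.
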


To prove this, we need some preparations.

\begin{lemma}
One has
\begin{enumerate}
    \item $\omega_i$ minimizes $\cJ_\omega^{\omega_i}$ over $\cH_\omega$.
    \item  $\omega_{i+1}$ minimizes
    $K_\omega+\cJ^{\omega_i}_\omega$ over $\cH_\omega$.
\end{enumerate}
\end{lemma}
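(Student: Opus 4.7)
The plan is to reduce both statements to the cocycle identities already collected in \S\ref{sec:functionals}, together with the fact, established during the proof of Theorem \ref{thm:ite-exist}, that $\omega_{i+1}$ is the unique minimizer of the twisted K-energy with background and twist both equal to $\omega_i$.

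For (1), I would simply invoke Lemma \ref{lem:cJ-cJ=J} with $u=0$, $w=u_i$, and $v=\varphi$ arbitrary in $\cH_\omega$. That lemma immediately gives
$$
\cJ^{\omega_i}_\omega(\varphi) - \cJ^{\omega_i}_\omega(u_i) \;=\; J(\varphi,u_i) \;\geq\; 0,
$$
with equality iff $\varphi = u_i$, by Lemma \ref{lem:J<I-J<J}. So $\omega_i$ is the unique minimizer of $\cJ^{\omega_i}_\omega$ on $\cH_\omega$.

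For (2), I would unwind the definition to write $K_\omega + \cJ^{\omega_i}_\omega = K^{\omega_i}_\omega$, and then apply the cocycle relation $K^\chi(u,v)+K^\chi(v,w) = K^\chi(u,w)$ for $\chi = \omega_i$ with intermediate point $u_i$:
$$
K^{\omega_i}_\omega(\varphi) \;=\; K^{\omega_i}(0,\varphi) \;=\; K^{\omega_i}(0,u_i) + K^{\omega_i}(u_i,\varphi).
$$
The first term on the right is a constant in $\varphi$, while the second is precisely the twisted K-energy $K^{\omega_i}_{\omega_i}(\varphi)$ when $\omega_i$ is taken as the background metric. But we already observed in the proof of Theorem \ref{thm:ite-exist} that equation \eqref{eqn:tcscK-eq} is the Euler--Lagrange equation of $K^{\omega_i}_{\omega_i}$, and that $\omega_{i+1}$ is its unique minimizer by \cite[Corollary 4.5]{CC2} (minimality) and \cite[Theorem 4.13]{BDL17} (uniqueness). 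Shifting by the constant $K^{\omega_i}(0,u_i)$, we conclude that $u_{i+1}$ is the unique minimizer of $K_\omega + \cJ^{\omega_i}_\omega$ over $\cH_\omega$.

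There is really no obstacle to speak of: the entire argument is bookkeeping in the cocycle identities, with all substantive inputs --- properness/minimality of $K^{\omega_i}_{\omega_i}$ and the nonnegativity $J \geq 0$ --- already assembled earlier in the paper. The only point one has to be careful about is tracking which functional is evaluated with respect to which background metric as the cocycle shuffles base points between $\omega$, $\omega_i$, and $\omega_{i+1}$.
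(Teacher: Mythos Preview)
Your proof is correct and matches the paper's approach. For (1) you invoke Lemma \ref{lem:cJ-cJ=J} exactly as the paper does; for (2) the paper simply cites \cite[Corollary 4.5]{CC2}, and your cocycle computation reducing $K^{\omega_i}_\omega$ to $K^{\omega_i}_{\omega_i}$ plus a constant is precisely the content behind that citation (and was already used in the proof of Theorem \ref{thm:ite-decrease-K-energy}).
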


\begin{proof}
The first assertion follows from Lemma \ref{lem:cJ-cJ=J}.
    See e.g. \cite[Corollary 4.5]{CC2} for a proof of the second one. 
\end{proof}

\begin{lemma}
\label{lem:ent+d1-bound}
    One can find $A>0$ such that for all $i\in\NN$
    \begin{equation}
        \label{eq:bound-d1-Ent-for-ui}
        Ent_\omega(u_i)+d_1(0,u_i)\leq A.
    \end{equation}
\end{lemma}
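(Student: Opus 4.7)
The plan is to combine the K-energy monotonicity from Theorem \ref{thm:ite-decrease-K-energy} with the assumed properness of $K_\omega$ to control $d_1(0,u_i)$, and then extract the entropy bound from the definition $K_\omega=Ent_\omega+\cJ^{-\Ric(\omega)}_\omega$ together with the uniform bound on $\cJ^{-\Ric(\omega)}_\omega$ in terms of $d_1$.

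First, I would observe that since $u_0=0$ and $K_\omega$ is monotone along the iteration, we get the two-sided control
\[
-C_0\ \leq\ \gamma(I_\omega-J_\omega)(u_i)-C_0\ \leq\ K_\omega(u_i)\ \leq\ K_\omega(u_0)=0,
\]
where the first inequality uses properness together with $(I_\omega-J_\omega)\geq 0$ from Lemma \ref{lem:J<I-J<J}. In particular $(I_\omega-J_\omega)(u_i)\leq C_0/\gamma$, hence $J_\omega(u_i)\leq nC_0/\gamma$ by Lemma \ref{lem:J<I-J<J}, and then Lemma \ref{lem:J-d-1-compare} yields a uniform bound $d_1(0,u_i)\leq A_1$.

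Second, for the entropy, I would write $Ent_\omega(u_i)=K_\omega(u_i)-\cJ^{-\Ric(\omega)}_\omega(u_i)$. The $K_\omega(u_i)$ term is already sandwiched between $-C_0$ and $0$. For the $\cJ$-term, the estimate from \cite[(4.2)]{CC2} that was recalled in the proof of Lemma \ref{lem:gamma-indpd-of-metric} gives $\cJ^{-\Ric(\omega)}_\omega(u_i)\geq -Cd_1(0,u_i)\geq-CA_1$, and the analogous upper bound follows by applying the same estimate to $+\Ric(\omega)$ (or directly from the $d_1$-continuity of $\cJ^\chi_\omega$ on $d_1$-balls, cf.\ Lemma \ref{lem:funtionals-to-E1}). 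Combining these yields $|\cJ^{-\Ric(\omega)}_\omega(u_i)|\leq A_2$, and therefore $0\leq Ent_\omega(u_i)\leq C_0+A_2$, where the lower bound is Jensen's inequality.

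Putting the two parts together gives the desired uniform constant $A$. There is no real obstacle here: once the K-energy upper bound $K_\omega(u_i)\leq 0$ from Theorem \ref{thm:ite-decrease-K-energy} is in hand, properness is exactly the statement that converts K-energy control into $(I-J)$-control, and the entropy bound is a purely algebraic consequence of the definition of $K_\omega$ plus the standard fact that $|\cJ^\chi_\omega|$ is dominated by $d_1$ on $d_1$-bounded sets.
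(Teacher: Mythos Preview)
Your proof is correct and follows essentially the same route as the paper: monotonicity of $K_\omega$ plus properness gives the $(I_\omega-J_\omega)$ bound, hence the $J_\omega$ bound via Lemma~\ref{lem:J<I-J<J}, hence the $d_1$ bound via Lemma~\ref{lem:J-d-1-compare}; then the entropy bound comes from $K_\omega=Ent_\omega+\cJ^{-\Ric(\omega)}_\omega$ together with $\cJ^{-\Ric(\omega)}_\omega\geq -Cd_1(0,\cdot)$. The only cosmetic difference is that you also argue an upper bound on $\cJ^{-\Ric(\omega)}_\omega(u_i)$, which is unnecessary since $K_\omega(u_i)\leq 0$ already gives $Ent_\omega(u_i)\leq -\cJ^{-\Ric(\omega)}_\omega(u_i)\leq CA_1$.
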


\begin{proof}
    By Theorem \ref{thm:ite-decrease-K-energy} we have that (recall $\omega=\omega_0$)
    $$
    0=K_\omega(\omega_0)\geq K_\omega(\omega_i)=Ent_\omega(u_i)+\cJ_\omega^{-\Ric(\omega)}(u_i)\geq\gamma(I_\omega-J_\omega)(u_i)-C_0.
    $$
    This implies that (using Lemma \ref{lem:J<I-J<J})
    $$
    J_\omega(u_i)\leq n(I_\omega-J_\omega)(u_i)\leq \frac{n C_0}{\gamma}.
    $$
    So Lemma \ref{lem:J-d-1-compare} gives that
    $$
    d_1(0,u_i)\leq C_1.
    $$

    On the other hand, by \cite[Lemma 4.4]{CC2}, one has
    $$
    0\geq K_\omega(\omega_i)=Ent_\omega(u_i)+\cJ_\omega(u_i)\geq Ent_\omega(u_i)-C_2d_1(0,u_i).
    $$
    So we obtain that
    $$
    Ent_\omega(u_i)\leq C_3,
    $$
    finishing the proof.
\end{proof}

\begin{lemma}
\label{lem:J-ui+1-ui-small}
    One has for all $i\in\NN$
    $$
    J(u_{i+1},u_{i})\leq K_\omega(u_i)-K_\omega(u_{i+1}).
    $$
    So one has $I(u_{i+1},u_i)\to 0$.
\end{lemma}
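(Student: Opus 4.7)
The plan is to extract the first inequality directly from the argument used for Theorem \ref{thm:ite-decrease-K-energy}, after reinterpreting the single-variable quantity $(I_{\omega_i}-J_{\omega_i})(\omega_{i+1})$ as the two-variable expression $J(u_{i+1},u_i)$. Specifically, since we have normalized $\tau=1$, the proof of Theorem \ref{thm:ite-decrease-K-energy} shows
$$
K_\omega(u_{i+1})-K_\omega(u_i)=K_{\omega_i}(\omega_{i+1})\leq -(I_{\omega_i}-J_{\omega_i})(\omega_{i+1}),
$$
so it suffices to identify the right-hand side with $-J(u_{i+1},u_i)$.

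First I would verify the algebraic identity $(I_{\omega_i}-J_{\omega_i})(\omega_{i+1})=J(u_{i+1},u_i)$ by a direct calculation: writing out the definitions of $I$ and $J$ as two-variable functionals, the $I$-functional is symmetric, and one checks $J(u_i,u_{i+1})+J(u_{i+1},u_i)=I(u_i,u_{i+1})$ since the $E$-terms cancel. Hence
$$
(I-J)(u_i,u_{i+1})=I(u_i,u_{i+1})-J(u_i,u_{i+1})=J(u_{i+1},u_i),
$$
and the convention $F_\omega(\cdot)=F(0,\cdot)$ (applied with background $\omega_i$) turns this into $(I_{\omega_i}-J_{\omega_i})(\omega_{i+1})=J(u_{i+1},u_i)$. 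Combining with the displayed inequality above yields the first statement $J(u_{i+1},u_i)\leq K_\omega(u_i)-K_\omega(u_{i+1})$.

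For the convergence $I(u_{i+1},u_i)\to 0$, I would invoke the properness set up at the beginning of this section: since $K_\omega(u_i)\geq\gamma(I_\omega-J_\omega)(u_i)-C_0\geq -C_0$ and $K_\omega(u_i)$ is monotonically non-increasing by Theorem \ref{thm:ite-decrease-K-energy}, the sequence $K_\omega(u_i)$ converges, in particular it is Cauchy, so $K_\omega(u_i)-K_\omega(u_{i+1})\to 0$. Then the first part of the lemma forces $J(u_{i+1},u_i)\to 0$. Finally, Lemma \ref{lem:J<I-J<J} gives $I(u_{i+1},u_i)\leq (n+1)J(u_{i+1},u_i)$, so $I(u_{i+1},u_i)\to 0$ as claimed.

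There is no genuine obstacle here: the only thing to be careful with is the bookkeeping in the identification $(I_{\omega_i}-J_{\omega_i})(\omega_{i+1})=J(u_{i+1},u_i)$, which is a short cocycle-style computation.
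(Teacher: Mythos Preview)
Your proposal is correct and follows essentially the same approach as the paper. The paper uses directly that $\omega_{i+1}$ minimizes $K_\omega+\cJ^{\omega_i}_\omega$ and then invokes Lemma~\ref{lem:cJ-cJ=J} to rewrite $\cJ^{\omega_i}_\omega(u_{i+1})-\cJ^{\omega_i}_\omega(u_i)=J(u_{i+1},u_i)$; you instead cite the inequality already obtained in the proof of Theorem~\ref{thm:ite-decrease-K-energy} and use the equivalent identity $(I-J)(u_i,u_{i+1})=J(u_{i+1},u_i)$, which is the same computation packaged slightly differently.
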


\begin{proof}
    Using that $\omega_{i+1}$ minimizes $K_\omega+\cJ^{\omega_{i}}_\omega$, we have
    $$    K_\omega(u_{i+1})+\cJ^{\omega_{i}}_\omega(u_{i+1})\leq K_\omega(u_i)+\cJ^{\omega_{i}}_\omega(u_i).
    $$
    Then using $\cJ^{\omega_i}_\omega(u_{i+1})-\cJ_\omega^{\omega_i}(u_i)=J(u_{i+1},u_{i})$ (recall Lemma \ref{lem:cJ-cJ=J}) we conclude the first assertion.

    For the second statement, note that the $K$-energy is bounded from below in our setting, so Theorem \ref{thm:ite-decrease-K-energy} implies that $\{K_\omega(u_i)\}_{i\in\NN}$ is a convergent sequence. So we conclude from Lemma \ref{lem:J<I-J<J}.
\end{proof}

\begin{corollary}
\label{cor:u-ik-1-u-ik-same-d1-lim}
    If $\{u_{i_k}\}_{k\in\NN}$ is a $d_1$-convergent subsequence, say $u_{i_k}\xrightarrow{d_1}u$, then $u_{i_k-1}\xrightarrow{d_1}u$ as well. (Here $u_{i_k-1}$ denotes the $(i_k-1)$-th term in the sequence $\{u_i\}_{i\in\mathbb{N}}$)
\end{corollary}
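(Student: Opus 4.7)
The plan is to combine Lemma~\ref{lem:J-ui+1-ui-small} with Corollary~\ref{cor:ui-vi-same-d1-lim}, both of which are already available.

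First I would recall from Lemma~\ref{lem:J-ui+1-ui-small} that $I(u_{i+1},u_i)\to 0$ as $i\to\infty$ (this used that $\{K_\omega(u_i)\}$ is monotone and bounded below in our setting, together with the sandwich $J\le I-J\le nJ$ of Lemma~\ref{lem:J<I-J<J}). In particular, along the subsequence in question,
\begin{equation*}
I(u_{i_k},u_{i_k-1})\longrightarrow 0\quad\text{as }k\to\infty.
\end{equation*}

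Next I would observe that $u_{i_k},u_{i_k-1}\in\cH_0$, so $E_\omega(u_{i_k})=E_\omega(u_{i_k-1})=0$ by definition of $\cH_0$. Thus the two sequences $\{u_{i_k}\}$ and $\{u_{i_k-1}\}$ satisfy the hypotheses of Corollary~\ref{cor:ui-vi-same-d1-lim}: both lie on the normalized slice $\{E_\omega=0\}$, the first one converges in $d_1$ to $u$, and the mutual $I$-energy tends to $0$. Applying that corollary directly yields $u_{i_k-1}\xrightarrow{d_1}u$.

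There is essentially no obstacle here; the statement is a clean packaging of the previous two results. The only point worth double-checking is that the normalization $E_\omega=0$ really holds for every term of the iteration sequence, which is true by the very definition of $\cH_0$ and the fact that the Ricci iteration is set up inside $\cH_0$ in Section~\ref{sec:smooth-conv}. Thus the corollary follows in two lines.
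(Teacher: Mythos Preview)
Your proof is correct and follows exactly the paper's approach: the paper's proof is the one-liner ``This follows from the previous lemma and Corollary~\ref{cor:ui-vi-same-d1-lim},'' and you have simply spelled out those two ingredients.
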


\begin{proof}
    This follows from the previous lemma and Corollary \ref{cor:ui-vi-same-d1-lim}.
\end{proof}

Now we are ready to prove Theorem \ref{thm:ui-d1-converge-to-cscK}

\begin{proof}[Proof of Theorem \ref{thm:ui-d1-converge-to-cscK}]

We first argue that any convergent subsequence $\{u_{i_k}\}_{k\in\NN}$ has to converge to $u^*$ in the $d_1$-topology. By \eqref{eq:bound-d1-Ent-for-ui} and Lemma \ref{lem:d1-ent-bound-cpt} this will imply that $u_i\xrightarrow{d_1}u^*$.

So assume that there exists a subsequence $\{u_{i_k}\}_{k\in\NN}$, converging in $d_1$ to a limit $u_\infty\in\cE^1_\omega$.  Then for any $u\in\cH_\omega$ we deduce that
\begin{equation*}
    \begin{aligned}        K_\omega(u_\infty)&\leq\lim_{k\to\infty} K_\omega(u_{i_k})\\
        &=\lim_{k\to\infty}(K_\omega(u_{i_k})+\cJ_\omega^{\omega_{i_k-1}}(u_{i_k})-\cJ_\omega^{\omega_{i_k-1}}(u_{i_k}))\\
        &\leq\lim_{k\to\infty}(K_\omega(u)+\cJ_\omega^{\omega_{i_k-1}}(u)-\cJ_\omega^{\omega_{i_k-1}}(u_{i_k-1}))\\
        &=\lim_{k\to\infty}(K_\omega(u)+J(u,u_{i_k-1}))=K_\omega(u)+J(u,u_\infty).\\
    \end{aligned}
\end{equation*}
Here we used that $K_\omega$ is $d_1$-lsc, $u_{i_k}$ minimizes $K_\omega+\cJ_\omega^{\omega_{i_k-1}}$, $u_{i_k-1}$ minimizes $\cJ_\omega^{\omega_{i_k-1}}$, Lemma \ref{lem:cJ-cJ=J}, Corollary \ref{cor:u-ik-1-u-ik-same-d1-lim} and Lemma \ref{lem:J-d1-conti}.
Thus we obtain that
$$
K_\omega(u_\infty)\leq K_\omega(u)+J(u,u_\infty)\ \text{for any }u\in\cH_\omega.
$$
By Lemma \ref{lem:funtionals-to-E1} we then see that $u_\infty$ is a minimizer of the functional
$$
F_\infty(u):=K_\omega(u)+J(u,u_\infty), u\in\cE^1_\omega.
$$

We now argue that $u_\infty$ must be a cscK potential and hence $u_\infty=u^*$.

By Lemma \ref{lem:ent+d1-bound}, Corollary \ref{cor:C-k-alpha-bound-for-ui} and Arzel\`a--Ascoli, we know that $u_\infty\in\cH_\omega$. So by Lemma \ref{lem:cJ-cJ=J} we can write
$$
F_\infty(u)=K_\omega(u)+\cJ_\omega^{\omega_{u_\infty}}(u)-\cJ^{\omega_{u_\infty}}_\omega(u_\infty)=K_\omega^{\omega_{u_\infty}}(u)-\cJ^{\omega_{u_\infty}}_\omega(u_\infty).
$$
So $u_\infty$ minimizes the twisted K-energy $K_\omega^{\omega_{u_\infty}}$.
The variation formula \eqref{eq:var-formula} of $K_\omega^{\omega_{u_\infty}}$ then implies that
$$
R(\omega_{u_\infty})=\bar R-n+\tr_{\omega_{u_\infty
}}\omega_{u_\infty}=\bar R.
$$
Thus $\omega_{u_\infty}$ is a cscK metric. By uniqueness assumption we have that $u_\infty=u^*$.

Therefore, we have shown that $u_{i_k}\xrightarrow{d_1}u^*$ for any convergent subsequence. So $u_i\xrightarrow{d_1}u^*$ follows. By Corollary \ref{cor:C-k-alpha-bound-for-ui} and Arzel\`a--Ascoli we then know that $u_i\to u^*$ smoothly.
\end{proof}

If we do not assume the uniqueness of the cscK  metric $\omega^*$, then the K-energy is proper modulo the action of biholomorphic automorphisms of $X$ (see \cite[Theorem 1.5]{BDL20}). Modifying our previous proofs and incorporating the ideas from \cite{DR19}, one can actually prove the following result, which improves Theorem \ref{thm:ui-d1-converge-to-cscK} and extends Darvas--Rubinstein's work \cite[Theorem 1.6]{DR19} to arbitrary K\"ahler classes. 

\begin{theorem}(=Theorem \ref{thm:smooth-convergence})
\label{thm:smooth-convergence-modulo-actions}
    Let $(X,\omega)$ be a compact K\"ahler manifold admitting a cscK metric in $\{\omega\}$. Then for any $\tau>0$ the iteration sequence \eqref{eqn:def-har-Ric-ite} sequence exists and there exist holomorphic diffeomorphsims $g_i$ such that $g_i^*\omega_i$ converges smoothly to a cscK metric.
\end{theorem}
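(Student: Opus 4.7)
The plan is to generalize the proof of Theorem \ref{thm:ui-d1-converge-to-cscK} to the setting without uniqueness of the cscK metric, working modulo $G := Aut_0(X)$ throughout and incorporating the translation technique of Darvas--Rubinstein \cite{DR19}. By \cite[Theorem 1.5]{BDL20}, the existence of a cscK metric in $\{\omega\}$ gives properness of $K_\omega$ modulo $G$: there exist $\gamma, C_0 > 0$ with $K_\omega(u) \geq \gamma\, d_{1,G}(0, u) - C_0$ for all $u \in \cH_0$. Combined with the monotonicity $K_\omega(u_i) \leq K_\omega(u_0) = 0$ from Theorem \ref{thm:ite-decrease-K-energy}, this yields $d_{1,G}(0, u_i) \leq A$ uniformly in $i$. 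I would then choose, for each $i$, an automorphism $g_i \in G$ with $d_1(0, g_i.u_i) \leq d_{1,G}(0, u_i) + 1$, and set $v_i := g_i.u_i \in \cH_0$; by construction $\{v_i\}$ has uniformly bounded $d_1$-distance to the origin.

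The next step is to establish a uniform entropy bound $Ent_\omega(v_i) \leq A'$. Writing $K_\omega(v_i) = Ent_\omega(v_i) + \cJ_\omega^{-\Ric(\omega)}(v_i)$ and using $\cJ_\omega^{-\Ric(\omega)}(v_i) \geq -C_1 d_1(0, v_i) \geq -C_2$ from \cite[(4.2)]{CC2}, the task reduces to bounding $K_\omega(v_i)$ from above. Since $K_\omega$ is not $G$-invariant, the idea is to exploit the cocycle identity $K_\omega(v_i) = K_\omega(u_i) + K_{\omega_{u_i}}(v_i)$, the $G$-invariance of $I$ and $J$ (Lemma \ref{lem:I&J-G-inv}), and the $d_1$-Lipschitz control of $K$-energy along $G$-orbits, together with $K_\omega(u_i) \leq 0$. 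Once $Ent_\omega(v_i) + d_1(0, v_i) \leq A''$ is secured, Lemma \ref{lem:d1-ent-bound-cpt} provides a $d_1$-convergent subsequence $v_{i_k} \xrightarrow{d_1} v_\infty \in \cE^1_\omega$.

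To identify $v_\infty$, I would adapt the argument of Theorem \ref{thm:ui-d1-converge-to-cscK}: translating the minimization property of $u_{i_k+1}$ against $K_\omega + \cJ_\omega^{\omega_{u_{i_k-1}}}$ by $g_{i_k+1}$ and using the $G$-isometry of $d_1$, one obtains a minimization inequality for $v_{i_k+1}$ against $G$-translates of $u_{i_k-1}$. Corollary \ref{cor:u-ik-1-u-ik-same-d1-lim}, combined with the $G$-invariance of $I$, then shows that these translates converge in $d_1$ to $v_\infty$ as well. Passing to the limit via Lemma \ref{lem:J-d1-conti} and the $d_1$-lower semicontinuity of $K_\omega$ from Lemma \ref{lem:funtionals-to-E1} yields $K_\omega(v_\infty) \leq K_\omega(u) + J(u, v_\infty)$ for every $u \in \cH_\omega$, forcing $v_\infty$ to minimize $K_\omega^{\omega_{v_\infty}}$ and hence satisfy $R(\omega_{v_\infty}) = \bar R$ by the variation formula \eqref{eq:var-formula}. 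For smooth convergence, I would invoke the a priori estimates of Section \ref{sec:estimates}: the Ricci iteration is $G$-equivariant under simultaneous translation, so applying $g_{i_k}$ uniformly to the tail of the iteration starting at step $i_k$ produces a valid Ricci iteration to which Corollary \ref{cor:C-k-alpha-bound-for-ui} applies, giving uniform $C^{k,\alpha}$ bounds on $v_{i_k}$. Arzel\`a--Ascoli then upgrades $d_1$-convergence to smooth convergence, and by possibly modifying the $g_i$ for indices outside the subsequence to align with the specific limit $v_\infty$, convergence for the full sequence follows.

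The main obstacle will be the entropy bound in the second step. Unlike the K\"ahler--Einstein setting of \cite{DR19}, where a $G$-invariant Ding functional provides a clean compactness input, here one must combine the non-$G$-invariant K-energy monotonicity with the $G$-invariance of the auxiliary functionals $I$, $J$ and the metric $d_1$. The identification of $v_\infty$ in the third step is correspondingly more delicate, since the translation automorphisms must be tracked carefully through the minimization properties of the iteration.
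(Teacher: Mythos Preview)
Your overall architecture is right, but there is one misconception and one genuine gap.

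\textbf{The entropy bound is not the obstacle.} You write that ``$K_\omega$ is not $G$-invariant''; in fact it is (the paper cites \cite[Lemma 4.11]{CC2}). Hence $K_\omega(v_i)=K_\omega(u_i)\leq 0$, and the entropy bound follows exactly as in Lemma \ref{lem:ent+d1-bound}: $Ent_\omega(v_i)\leq -\cJ_\omega^{-\Ric(\omega)}(v_i)\leq C\,d_1(0,v_i)$. What you flag as ``the main obstacle'' disappears.

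\textbf{The real gap is the a priori estimate.} You propose translating the tail of the iteration by a \emph{fixed} $g_{i_k}$ and invoking Corollary \ref{cor:C-k-alpha-bound-for-ui}. That corollary, however, requires $Ent_\omega+d_1\leq A$ at \emph{every} step (its proof is inductive, using the coupling to the previous potential and ultimately $u_0=0$). After translating by $g_{i_k}$ you only control $d_1(0,g_{i_k}.u_{i_k})$ at the single index $i_k$; for $j\neq i_k$ there is no bound on $d_1(0,g_{i_k}.u_j)$ --- indeed, if one automorphism worked for all steps there would be no need for varying $g_i$. So the hypotheses are not met uniformly in $k$, and the argument does not close.

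The paper handles this differently. It sets $h_{i-1}:=g_i.u_{i-1}$ and first bounds $d_1(0,h_{i-1})$ directly via the $G$-invariance of $I$ and the quasi-triangle inequality (Lemmas \ref{lem:I&J-G-inv}, \ref{lem:triangle-ineq-for-I}). This, together with $Ent_\omega(v_i)\leq A$, is already enough input for the $C^0$ step of \S\ref{sec:estimates}, giving $|v_i|\leq B_1$. The crucial trick is then to show the \emph{gauge drift} $f_i:=g_i^{-1}\circ g_{i+1}$ stays in a bounded subset of $G$, by bounding $d_1(f_i.0,0)$. Since $h_i=f_i.v_i$ with $f_i$ bounded, $h_i$ inherits all estimates from $v_i$, and the coupled system \eqref{eq:coupled-eqn-hi-vi} can be bootstrapped exactly as in \S\ref{sec:estimates}. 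This gauge-drift argument is the missing ingredient in your plan.

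Two smaller points: your minimization limit is essentially the paper's (substitute $g_{i_k}^{-1}.u$ into the minimization for $u_{i_k}$ and use $G$-invariance of $K_\omega$ and $J$), though watch the index shift ($u_{i_k}$ minimizes against $\omega_{u_{i_k-1}}$, not $u_{i_k+1}$). And for full-sequence convergence, the paper does not ``modify the $g_i$'' post hoc; it runs a contradiction argument and invokes Berman--Berndtsson uniqueness \cite{BerBern17} to identify any subsequential limit with $u^*$.
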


\begin{proof}
We give the necessary details for the reader's convenience. As above, we assume without loss of generality that $\tau=1$.

    First, using that the K-energy decreases along $u_i$ and is proper modulo $G=Aut_0(X)$, we have that (recall \eqref{eq:def-d-G})
    $$d_{1,G}(0,u_i)\leq A_0\text{ for all }i\in\NN.$$
    Fix a cscK metric $\omega^*\in\{\omega\}$ with $\omega^*=\omega+\ddc u^*$ and $u^*\in\cH_0$.
    Then pick $g_i\in G$ such that
    \begin{equation}
        \label{eq:def-g-i}
        d_1(u^*,g_i.u_i)\leq d_{1,G}(u^*,u_i)+\frac{1}{i}\leq d_{1}(0,u^*)+d_{1,G}(0,u_i)\leq A_1.
    \end{equation}
    Thus we deduce that
    $$
    d_1(0,g_i.u_i)\leq A_1+d_1(0,u^*).
    $$
    Then using that the K-energy is $G$-invariant (see e.g. \cite[Lemma 4.11]{CC2}), one can argue as in the proof of Lemma \ref{lem:ent+d1-bound} to show that
    $$
    Ent_\omega(g_i.u_i)\leq A_2\text{ for all }i\in\NN.
    $$
    So the sequence $\{g_i.u_i\}_{i\in\NN}$ is $d_1$-precompact. We wish to show that it converges to $u^*$ smoothly. To this end, we need some uniform estimates for the sequence.

    By Lemmas \ref{lem:I&J-G-inv}, \ref{lem:J<I-J<J}, \ref{lem:J-ui+1-ui-small} and Theorem \ref{thm:ite-decrease-K-energy}, we know that
    \begin{equation*}
        \begin{aligned}
             I(g_i.u_{i},g_i.u_{i-1})&=I(u_{i},u_{i-1})\leq (n+1)J(u_{i},u_{i-1})\\
             &\leq (n+1)(K_\omega(u_{i-1})-K_\omega(u_{i}))\to 0.\\
        \end{aligned}
    \end{equation*}
    And also, one has (by Lemma \ref{lem:triangle-ineq-for-I})
    $$
    J(0,g_i.u_{i-1})\leq I(0,g_i.u_{i-1})\leq C_n(I(0,g_i.u_i)+I(g_i.u_{i},g_i.u_{i-1})).
    $$
    So we derive that (using Lemma \ref{lem:J-d-1-compare})
    $$
    d_1(0,g_i.u_{i-1})\leq A_3.
    $$
The upshot is that, there exists some $A>0$ such that
$$
Ent_\omega(g_i.u_i)+d_1(0,g_i.u_{i-1})\leq A\text{ for all }i\geq 1.
$$ 

For simplicity let us put
$$
v_i:=g_i.u_i\text{ and }h_{i-1}:=g_i.u_{i-1}.
$$
Then from \eqref{eq:R-omega-i=R-n+tr} we deduce that
$$
R(\omega_{v_i})=\bar R-n+\tr_{\omega_{v_i}}(\omega+\ddc h_{i-1}).
$$
This is equivalent to  (cf. \cite[Lemma 4.19]{CC2})
\begin{equation}
\label{eq:coupled-eqn-hi-vi}
    \begin{cases}
        (\omega+\ddc v_i)^n=e^{F_i}\omega^n,\\
        \Delta_{\omega_{v_i}}(F_i+h_{i-1})=\tr_{\omega_{v_i}}(\Ric(\omega)-\omega)+n-\bar R.\\
    \end{cases}
\end{equation}
And we have that
$$
Ent_\omega(v_i)+d_1(0,h_{i-1})\leq A\text{ for all }i\geq 1.
$$
Then as in Proposition \ref{prop:C0} we can obtain the $C^0$ estimate:
$$
|v_i|\leq B_1\text{ for all }i\geq1.
$$
This implies that (by Lemma \ref{lem:d1-comparable-L1})
$$
d_1(0,v_i)=d_1(0,g_{i}.u_i)=d_1(g_{i}^{-1}.0,u_i)=d_1(g_{i+1}.(g_{i}^{-1}.0),h_{i+1})\leq B_2.
$$
Put
$$
f_i:=g^{-1}_{i}\circ g_{i+1}.
$$
Then we have
$$
d_1(f_i.0,0)\leq d_1(f_i.0,h_{i+1})+d_1(0,h_{i+1})\leq B_2+A_3\text{ for all }i\geq 1.
$$
By the proof of \cite[Proposition 6.8]{DR17}, $\{f_i\}_{i\geq 1}$ is contained in a bounded set of $G.$ In particular, all derivatives of $f_i$ up to order $m$, say, are bounded by some $C_m$ independently of $i$. Since one has
$$
h_i=g_{i+1}.u_i=f_i.v_i,
$$
then $v_i$ and $h_i$ enjoy the same a priori estimates. Now the same arguments as in \S \ref{sec:estimates} apply to the system of equations \eqref{eq:coupled-eqn-hi-vi} as well. We conclude that there are uniform $C^{k,\alpha}$ estimates (independent of $i$) for $v_i$ and $h_i$.

Now we are ready to show that $v_i\to u^*$ smoothly. 

By Arzel\`a--Ascoli it suffices to argue that $v_i\xrightarrow{d_1}u^*$. We prove by contradiction. Assume that there exists a subsequence such that $v_{i_k}\xrightarrow{d_1}v_\infty$ for some $v_\infty\in\cE^1_\omega$ with $d_1(u^*,v_\infty)>\varepsilon>0$. By our uniform estimates for $v_i$ and Arzel\`a--Ascoli we know that $v_\infty\in\cH_0.$

For any $u\in\cH_0$, one has (as in the proof of Theorem \ref{thm:ui-d1-converge-to-cscK})
\begin{equation*}
    \begin{aligned}
    K_\omega(v_\infty)&\leq\lim_{k\to\infty}K_\omega(v_{i_k})=\lim_{k\to\infty}K_\omega(u_{i_k})\\
        &=\lim_{k\to\infty}(K_\omega(u_{i_k})+\cJ^{\omega_{u_{i_k-1}}}(u_{i_k})-\cJ^{\omega_{u_{i_k-1}}}(u_{i_k}))\\
        &\leq\lim_{k\to\infty}(K_\omega(g_{i_k}^{-1}.u)+\cJ^{\omega_{u_{i_k-1}}}(g_{i_k}^{-1}.u)-\cJ^{\omega_{u_{i_k-1}}}(u_{i_k-1}))\\
        &=\lim_{k\to\infty}(K_\omega(u)+J(g^{-1}_{i_k}.u,u_{i_k-1}))\\
        &=\lim_{k\to\infty}(K_\omega(u)+J(u,h_{i_k-1}))=K_\omega(u)+J(u,v_\infty).\\
    \end{aligned}
\end{equation*}
Here we used that $K_\omega$ and $J$ are $G$-invariant (recall Lemma \ref{lem:I&J-G-inv}).
Moreover, in the last equality we used that $h_{i_k-1}\xrightarrow{d_1}v_\infty$. Indeed, Lemma \ref{lem:I&J-G-inv} and \ref{lem:J-ui+1-ui-small} imply that $I(v_i,h_{i-1})=I(u_{i},u_{i-1})\to 0$. So 
Corollary \ref{cor:ui-vi-same-d1-lim} implies that $\lim_{k}h_{i_k-1}=\lim_k v_{i_k}=v_\infty$, as claimed.

From above we observe that $v_\infty$ is a minimizer for the functional
$$
F_\infty(u):=K_\omega(u)+J(u,v_\infty),\ u\in\cH_0.
$$
This further implies that $v_\infty$ is a minimizer of $F_\infty$ over $\cH_\omega$. Then as in the proof of Theorem \ref{thm:ui-d1-converge-to-cscK}, we conclude that $v_\infty$ is a cscK potential. 

By \cite[Theorem 1.3]{BerBern17} there exists $f\in G$ such that $v_\infty=f.u^*$. So we obtain that (recall \eqref{eq:def-g-i} and that $v_{i_k}=g_{i_k}.u_{i_k}$)
$$
d_1(v_{i_k},u^*)-\frac{1}{i_k}\leq d_{1,G}(v_{i_k},u^*)\leq d_1(f^{-1}.v_{i_k},u^*)=d_1(v_{i_k},v_\infty).
$$
By choice the right hand goes to zero, while the left hand side is strictly bigger than $\frac{\varepsilon}{2}>0$ for any $k\gg1$. This is a contradiction. So we finish the proof.
\end{proof}

As in \cite{DR19}, one expects that the appearance of $g_i$ in the above theorem is actually redundant, which might require substantial new ideas; compare also \cite[Proposition 4.17]{CC2} in the setting of continuity method. In the case of K\"ahler Ricci flow, the analogous problem is studied in \cite{TZ07,PSSW08,TZ13,TZZZ13,CoSz16}.

\section{The case of twisted cscK metrics}
\label{sec:thR-ite}

In the study of cscK metrics, it is often beneficial to allow for some twisted terms. More precisely, given a closed smooth real $(1,1)$ form, one can study the following $\chi$-twisted cscK equation:
\begin{equation}
    \label{eq:def-tcscK}
    R(\omega_u)=\bar R-\bar \chi+\tr_{\omega_u}\chi.
\end{equation}
This is equivalent to saying that $\Ric(\omega_u)-\chi$ is harmonic with respect to $\omega_u$.
Therefore, to search for $\chi$-twisted cscK metrics, we are led to the following twisted flow:
\begin{equation}
    \label{eq:def-thKRF}
    \partial_t\omega_t=-\Ric(\omega_t)+\mathrm H_{\omega_t}(\Ric(\omega_t)-\chi)+\chi,\ \omega_0=\omega.
\end{equation}
Here $H_{\omega_t}$ denotes the harmonic projection operator of the metric $\omega_{t}$.
When $2\pi c_1(X)=\lambda\{\omega\}+\{\chi\}$, this flow becomes
$$
\partial_t\omega_t=-\Ric(\omega_t)+\lambda\omega_t+\chi,\ \omega_0=\omega,
$$
which is the twisted K\"ahler Ricci flow studied in \cite{Liu13,CoSz16}.

Discretizing the flow \eqref{eq:def-thKRF}, we get (for some given $\tau>0$)
\begin{equation}
    \label{eq:def-thR-ite}
    \frac{\omega_{i+1}-\omega_i}{\tau}=-\Ric(\omega_{i+1})+\mathrm H_{\omega_{i+1}}(\Ric(\omega_{i+1})-\chi)+\chi,\ i\in\NN,\ \omega_0=\omega.
\end{equation}

The next result can be proved following exactly the same strategy as we did for the untwisted case. Hence we omit the details.

\begin{theorem}
\label{thm:thRic-ite-converge}
Assume that $\chi\geq0$.
    There exists a constant $\tau_0\in(0,\infty]$ depending only on $X,\{\omega\}$ and $\{\chi\}$ such that for any $\tau\in(0,\tau_0)$, the iteration sequence \eqref{eq:def-thR-ite} exists for all $i\in\NN$, with each $\omega_i$ being uniquely determined by $\omega_0$, along which the $\chi$-twisted K-energy $K_\omega^\chi$ decreases. Moreover, if there exists a unique $\chi$-twisted cscK metric $\omega^*\in\{\omega\}$, then for any $\tau>0$ the sequence $\omega_i$ converges to $\omega^*$ smoothly.
\end{theorem}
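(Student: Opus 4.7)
The plan is to mimic the untwisted proofs step by step, treating the closed semi-positive form $\chi$ as a harmless perturbation. First, I would take the trace of \eqref{eq:def-thR-ite} with respect to $\omega_{i+1}$; since the trace of a harmonic $(1,1)$-form with respect to $\omega_{i+1}$ equals its cohomological average $\bar R - \bar\chi$, the equation reduces to
$$
R(\omega_{i+1}) = \bar R - \overline{\chi+\omega_i/\tau} + \tr_{\omega_{i+1}}\!\left(\chi+\tfrac{\omega_i}{\tau}\right),
$$
which is a twisted cscK equation with twist $\chi+\omega_i/\tau$. Because $\chi\geq 0$ and $\omega_i/\tau>0$, the twist is strictly positive, so invoking \cite[Theorem 4.1]{CC2} reduces existence of $\omega_{i+1}$ to the properness of the twisted K-energy $K_{\omega_i}^{\chi+\omega_i/\tau}=K_{\omega_i}^\chi+\tfrac1\tau(I_{\omega_i}-J_{\omega_i})$, and uniqueness follows from \cite[Theorem 4.13]{BDL17}.

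To ensure properness for every step, I would introduce the analog threshold $\gamma^\chi(X,\{\omega\})$ obtained by replacing $K_\omega$ by $K_\omega^\chi$ in \eqref{eq:def-gamma} and verify, using the cocycle identity for $K^\chi$ and a bound of the form $K_\omega^\chi(\varphi)\geq -Cd_1(0,\varphi)$ (which follows from \cite[(4.2)]{CC2} applied to $-\Ric(\omega)+\chi$), that $\gamma^\chi(X,\{\omega\})$ is finite and class-invariant as in Lemma \ref{lem:gamma-indpd-of-metric}. Choosing $\tau_0$ so that $-1/\tau_0\leq \gamma^\chi(X,\{\omega\})$ then yields long-time existence. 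The monotonicity $K_\omega^\chi(\omega_{i+1})\leq K_\omega^\chi(\omega_i)$ is immediate from the fact that $\omega_{i+1}$ minimizes $K_{\omega_i}^{\chi+\omega_i/\tau}$, exactly as in the proof of Theorem \ref{thm:ite-decrease-K-energy}.

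For the smooth convergence, I would assume the unique twisted cscK metric $\omega^*\in\{\omega\}$ exists, which by the twisted analog of \cite{BDL20} gives properness of $K_\omega^\chi$, so $\gamma^\chi>0$ and any $\tau>0$ is admissible. Arguing as in Lemma \ref{lem:ent+d1-bound}, the monotonicity of $K_\omega^\chi$ combined with its properness gives uniform bounds $Ent_\omega(u_i)+d_1(0,u_i)\leq A$. Next I would rerun the a priori estimates of Section \ref{sec:estimates}: writing $\omega_{u_{i+1}}^n=e^{F_{i+1}}\omega^n$, the Laplacian equation becomes
$$
\Delta_{\omega_{u_{i+1}}}(F_{i+1}+u_i/\tau)=\tr_{\omega_{u_{i+1}}}\!\left(\Ric(\omega)-\chi-\omega/\tau\right)+n/\tau+\bar\chi-\bar R,
$$
and since $\chi$ is a fixed smooth form the Chen--Cheng machinery of \cite[\S 3]{CC2} applies verbatim, absorbing $\chi$ into the bounded terms throughout the Nash--Moser iteration, yielding $C^{k,\alpha}$ estimates for $u_i$ depending only on $X,\omega,\chi,\alpha,k,A$.

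Finally, as in the proof of Theorem \ref{thm:ui-d1-converge-to-cscK}, the estimate $J(u_{i+1},u_i)\leq K_\omega^\chi(u_i)-K_\omega^\chi(u_{i+1})\to 0$ and the fact that $\omega_{u_{i+1}}$ minimizes $K_{\omega}^\chi+\cJ^{\omega_{u_i}}_\omega$ modulo an additive constant (using cocycle) imply that any $d_1$-limit $u_\infty$ of a subsequence satisfies $K_\omega^\chi(u_\infty)\leq K_\omega^\chi(u)+J(u,u_\infty)$ for all $u\in\cH_\omega$. Thus $u_\infty$ minimizes $K_\omega^{\chi+\omega_{u_\infty}}$, and the twisted analog of the variation formula \eqref{eq:var-formula} forces $\omega_{u_\infty}$ to be $\chi$-twisted cscK; uniqueness then identifies $u_\infty=u^*$ and Arzel\`a--Ascoli upgrades $d_1$-convergence to smooth convergence. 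The main technical obstacle is verifying that the a priori estimates of Section \ref{sec:estimates} indeed survive the presence of $\chi$ in the Laplacian equation without introducing new bad terms at the Nash--Moser stage; the hypothesis $\chi\geq 0$ is what guarantees the comparison principle and positivity arguments in \cite[\S 3]{CC2} remain applicable.
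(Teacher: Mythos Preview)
Your proposal is correct and follows exactly the strategy the paper itself indicates (the paper omits the proof, stating only that it ``can be proved following exactly the same strategy as we did for the untwisted case''). One small clarification: the hypothesis $\chi\geq 0$ is \emph{not} what makes the Nash--Moser estimates of \cite[\S 3]{CC2} go through---those estimates only require $\chi$ to be a fixed smooth form, which gets absorbed into $\Ric(\omega)$ as bounded data; rather, $\chi\geq 0$ is needed so that the twist $\chi+\omega_i/\tau$ is strictly positive, which is the hypothesis under which \cite[Theorem 4.1]{CC2} and \cite[Theorem 4.13]{BDL17} guarantee existence and uniqueness of $\omega_{i+1}$ at each step.
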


Note that, if $\chi>0$, the uniqueness of $\chi$-twisted cscK metric is automatic by \cite[Theorem 4.13]{BDL17}. This might be useful, since one can study the flow \eqref{eqn:hKRF} or the iteration \eqref{eqn:def-har-Ric-ite} by adding a small amount of $\chi$ (cf. the perturbation trick in \cite[\S 4]{BerBern17}).

One can try to extend our work further to the case of conical cscK metrics, extremal metrics and other canonical metrics. We leave this to the interested readers.

\AtNextBibliography{\small}
\begingroup
\setlength\bibitemsep{2pt}
\printbibliography

\end{document}